\setlist[1]{itemsep=2pt}
\theoremstyle{plain}\newtheorem{theorem}{Theorem}[section]
\theoremstyle{plain}\newtheorem{proposition}[theorem]{Proposition}
\theoremstyle{plain}\newtheorem{lemma}[theorem]{Lemma}
\theoremstyle{plain}
\theoremstyle{definition}\newtheorem{definition}{Definition}
\theoremstyle{definition}\newtheorem{remark}[]{Remark}
\theoremstyle{definition}
\theoremstyle{definition}
\theoremstyle{definition}
\theoremstyle{definition}
\theoremstyle{thm}
\newcommand{\HH}{\mathcal{H}}
\newcommand{\RR}{\mathcal{R}}
\newcommand{\Pol}{\R[x]_{\leq d}}
\newcommand{\R}{\mathbb{R}}
\newcommand{\C}{\mathbb{C}}
\newcommand{\N}{\mathbb{N}}
\newcommand{\n}[1]{\left\lVert#1\right\rVert}
\newcommand{\dist}{\mathrm{dist}}
\title{Testing the variety hypothesis}
\author{A. Lerario, P. Roos Hoefgeest, M. Scolamiero, and A. Tamai}
\begin{document}
\maketitle
\begin{abstract}

Given a probability measure on the unit disk, we study the problem of deciding whether, for some threshold probability, this measure is supported near a real algebraic variety of given dimension and bounded degree. We call this ``testing the variety hypothesis''.
We prove an upper bound on the so--called ``sample complexity'' of this problem and show how it can be reduced to a semialgebraic decision problem. This is done by studying in a quantitative way the Hausdorff geometry of the space of real algebraic varieties of a given dimension and  degree.
\end{abstract}

\section{Introduction}
The manifold hypothesis is a foundational idea in machine learning, suggesting that high--dimensional data often concentrate near a low--dimensional smooth manifold. This concept has been formalized in recent works such as \cite{Fefferman, Narayanan}, which introduce algorithms for testing whether a probability measure is supported near a manifold with controlled geometry—typically quantified via bounds on volume and reach.

In this paper, we propose an algebraic analogue of that framework, which we call ``testing the variety hypothesis''. Given a probability measure on the unit disk, we ask whether it is mostly concentrated near a real algebraic variety of fixed dimension and bounded degree. This reformulation broadens the scope of the original problem by allowing for singular supports, which can be more appropriate when smoothness assumptions are too restrictive. 

The following theorem is a conceptual consequence of our main probabilistic result (\cref{thm:Nvariationintro}) and the semialgebraic reformulation developed in \cref{sec:decision}. It asserts the existence of an algorithm that, with high confidence, decides whether a given distribution is well--approximated by a real algebraic variety of bounded complexity. While we do not construct or analyze such an algorithm explicitly, the reduction to a semialgebraic decision problem guarantees its existence within the standard framework of real algebraic geometry.

\begin{theorem}[Testing the variety hypothesis]\label{thm:decision}
Let \( \mu \) be a Borel probability measure supported on \( D^n \). Given a target dimension \( k \), a degree bound \( d \), an error tolerance \( \epsilon > 0 \), and a confidence level \( \delta \in (0,1) \), there exists an algorithm that—after observing at least  
\[
m \geq \frac{1}{\epsilon^2} \log\left( \frac{c_1}{\epsilon^{c_2} \delta} \right)
\]  
independent samples from \( \mu \)—will, with probability at least \( 1 - \delta \), distinguish between the following two situations (at least one must hold):
\begin{enumerate}
  \item There exists a real algebraic variety \( Z \subset \R^n \) of dimension \( k \) and degree at most \( 2d \) such that  
  \[
  \int_{D^n} \mathrm{dist}(x, Z)^2 \, d\mu(x) < \tfrac{3}{2} \epsilon,
  \]
  \item or, for every real algebraic variety \( Z \subset \R^n \) of dimension \( k \) and degree at most \( d \), it holds that  
  \[
  \int_{D^n} \mathrm{dist}(x, Z)^2 \, d\mu(x) > \tfrac{1}{2} \epsilon.
  \]
\end{enumerate}
Here, \( \mathrm{dist}(x, Z) \) denotes the Euclidean distance from the point \( x \) to the set \( Z \), and $c_1, c_2 > 0$ depend only on $n,k,d$.
\end{theorem}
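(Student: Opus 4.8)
The plan is to combine a uniform convergence (empirical process) argument with a structural compactness result on the space of varieties. The quantity we need to estimate from samples is
\[
F(Z) := \int_{D^n} \mathrm{dist}(x,Z)^2 \, d\mu(x),
\]
and its empirical counterpart $\widehat{F}_m(Z) := \frac{1}{m}\sum_{i=1}^m \mathrm{dist}(X_i,Z)^2$ based on i.i.d.\ samples $X_1,\dots,X_m$. Since $D^n$ is the unit disk and $Z$ ranges over varieties of dimension $k$ and bounded degree, the function $x \mapsto \mathrm{dist}(x,Z)^2$ is bounded (by a constant depending only on $n$, as $D^n$ has diameter $2$) and $1$-Lipschitz-ish in a controlled way. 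The algorithm is then the obvious one: compute $\inf_Z \widehat{F}_m(Z)$ over varieties of degree at most, say, $\tfrac{3}{2}d$ (or some intermediate degree), and compare it to $\epsilon$; the gap between $\tfrac{1}{2}\epsilon$ and $\tfrac{3}{2}\epsilon$ in the two alternatives is exactly the slack absorbing the estimation error.

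The first step is to invoke \cref{thm:Nvariationintro} — the main probabilistic result — which I expect provides precisely a uniform deviation bound of the form $\sup_Z |\widehat{F}_m(Z) - F(Z)| < \tfrac{1}{2}\epsilon$ with probability at least $1-\delta$, once $m \gtrsim \tfrac{1}{\epsilon^2}\log(c_1/(\epsilon^{c_2}\delta))$. The $\epsilon^{-2}$ rate is the standard Hoeffding/Bernstein rate for bounded random variables, and the logarithmic factor with a polynomial-in-$\epsilon^{-1}$ argument is the signature of a covering-number bound: one must show that the class of functions $\{x \mapsto \mathrm{dist}(x,Z)^2 : Z \text{ of dim } k, \deg \leq d\}$ has metric entropy growing like $c_2 \log(1/\epsilon)$, i.e.\ polynomial covering numbers $(c_1/\epsilon)^{c_2}$ in the sup-norm. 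This is where the ``quantitative Hausdorff geometry of the space of real algebraic varieties'' enters: varieties of bounded degree and fixed dimension form a set that, after passing to distance functions, is totally bounded with polynomially many $\epsilon$-balls, because they are cut out by polynomials of bounded degree whose coefficient vectors live in a bounded-dimensional projective space, and the distance function depends semialgebraically and Lipschitz-continuously on those coefficients.

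Granting the uniform bound, the proof of the theorem is a short deduction. Run the algorithm: it outputs ``case (1)'' if $\inf_Z \widehat{F}_m(Z) < \epsilon$ over the appropriate degree range, and ``case (2)'' otherwise. On the event of probability $\geq 1-\delta$ where the uniform bound holds: if the algorithm declares case (1), there is a witness $Z$ (of degree at most $2d$, say, because the empirical optimum over the chosen degree range lies within $2d$) with $\widehat{F}_m(Z) < \epsilon$, hence $F(Z) < \epsilon + \tfrac12\epsilon = \tfrac32\epsilon$, so alternative (1) holds; if the algorithm declares case (2), then $\widehat{F}_m(Z) \geq \epsilon$ for every $Z$ of degree at most $d$, hence $F(Z) > \epsilon - \tfrac12\epsilon = \tfrac12\epsilon$ for all such $Z$, so alternative (2) holds. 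The clause ``at least one must hold'' is automatic because the two alternatives with the stated degree bounds and thresholds are not mutually exclusive by design. Finally, the \emph{existence} of the algorithm as a genuine (semi)algorithm — rather than just a mathematical map — follows from \cref{sec:decision}: minimizing $\widehat{F}_m(Z)$ over varieties of bounded degree is a semialgebraic optimization problem (the data $(X_1,\dots,X_m)$ are fixed rationals once observed, $\mathrm{dist}(x,Z)^2$ is semialgebraic in the coefficients of $Z$, and the infimum over a semialgebraic family is computable by quantifier elimination), so the comparison with $\epsilon$ is decidable.

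**The main obstacle**, and the real content behind \cref{thm:Nvariationintro}, is the covering-number estimate: showing that $\epsilon$-nets of the space of varieties (in the metric induced by pulling back the sup-distance on squared distance functions) have cardinality polynomial in $1/\epsilon$, uniformly in the ambient measure $\mu$. The subtlety is that a small perturbation of a variety's defining equations can cause a large Hausdorff perturbation of the variety near singular points or near points where the variety is ``tangent to infinity'' within $D^n$ — so the continuity of $Z \mapsto \mathrm{dist}(\cdot, Z)$ in coefficients is not uniformly Lipschitz without care. Resolving this requires the quantitative semialgebraic/real-algebraic geometry input (effective bounds, perhaps via Łojasiewicz-type inequalities with explicit exponents, or via the construction of explicit semialgebraic ``tubes'' around varieties whose geometry is controlled by the degree), which is exactly the part of the paper that studies the Hausdorff geometry of the space of real algebraic varieties in a quantitative way.
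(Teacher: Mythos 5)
Your proposal follows essentially the same route as the paper: invoke \cref{thm:Nvariationintro} (applied at accuracy $\tfrac12\epsilon$) to get uniform convergence of the empirical risk, threshold the empirical infimum at $\epsilon$ so that the deviation is absorbed into the $\tfrac12\epsilon$--$\tfrac32\epsilon$ gap, and obtain the existence of the algorithm itself from the semialgebraic formulation and effective quantifier elimination of \cref{sec:decision}. You also correctly locate the real content in the polynomial-in-$1/\epsilon$ covering bound for $\HH(n,k,d)$ in the Hausdorff metric (supplied by \cref{thm:thomintro}, \cref{thm:tauintro} and \cref{coro:netintro}), and your only loose end --- the degree bookkeeping explaining why alternative (1) involves degree $2d$, namely that the net centers and the sum-of-squares encoding are degree-$2d$ objects --- is treated at the same level of detail in the paper's own proof sketch.
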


The main contribution of this work lies in developing the conceptual and mathematical framework underpinning this result. It combines ideas from statistical learning theory—such as the notion of sample complexity—with tools from real algebraic geometry that allow the extension of learnability results to noncompact and stratified hypothesis classes. In particular, we develop a quantitative study of the Hausdorff geometry of the space of real algebraic varieties, which plays a crucial role in handling complexity estimates and is of independent mathematical interest. We use the remainder of the introduction to explain the key ideas behind our approach and the main results of the paper.

\subsection{A notion of controlled geometry}
The varieties considered throughout this paper—those referenced in the title and central to our testing problem—are real algebraic sets with \emph{controlled geometry}, meaning they have fixed dimension and are defined by polynomial equations of bounded degree.

This choice is motivated by the need to generalize the manifold hypothesis, commonly assumed in nonlinear dimensionality reduction, to a broader class of spaces. While the manifold hypothesis postulates that data lie near a low--dimensional smooth manifold, real--world datasets often violate smoothness assumptions and include singularities or stratified structures. To model such settings, we require a class of geometric objects that offers both flexibility and mathematical tractability.

Real algebraic sets provide a natural solution. They encompass singular structures, admit finite algebraic descriptions, and lend themselves to semialgebraic formulations of optimization and learning problems. This makes them an ideal candidate for extending manifold--based approaches to more general scenarios.

Formally, for polynomials $p_1, \ldots, p_s \in \R[x_1, \ldots, x_n]$, we define their common zero set as
\[
Z(p_1, \ldots, p_s) := \left\{ x \in \R^n \,\bigg|\, p_1(x) = \cdots = p_s(x) = 0 \right\}.
\]

If each polynomial $p_j$ has degree at most $d$, we say that the corresponding real algebraic set $Z$ (which we will also refer to as a “variety”) has degree at most $d$, and write $\deg(Z) \leq d$. We do not impose irreducibility or define degree via a minimal representation, as our interest is in bounding complexity rather than enforcing uniqueness.

Let $D^n \subset \R^n$ denote the closed unit disk. For any variety $Z \subset \R^n$, we define the dimension of $Z \cap D^n$, denoted $\dim(Z \cap D^n)$, to be its topological dimension as a semialgebraic set. Note that while $Z$ is algebraic, its intersection with $D^n$ is generally semialgebraic rather than algebraic. For more details on these notions, we refer the reader to \cref{sec:semialgebraic}.

We now define our \emph{hypothesis class}, the key geometric class used throughout the paper:
\[
\HH(n,k,d) := \left\{ Z \cap D^n \,\bigg|\, \dim(Z \cap D^n) = k, \;\deg(Z) \leq d  \right\}.
\]
Elements of $\HH(n,k,d)$ are not required to be smooth, connected, or even manifolds. Nevertheless, their geometry remains controlled in a strong sense: by Thom--Milnor's bound, the number of their connected components is $O(d^n)$ \cite{Milnor}, and their $k$--dimensional Hausdorff volume is bounded by $O(d^k)$, via classical results in integral geometry \cite{ComteYomdin}.  Given a probability measure, can we test whether it concentrates near one of these varieties?

\subsection{Mathematical formulation of the problem}
To formulate the problem rigorously, let now $\mu$ be a Borel probability measure supported on $D^n$. The test for the variety hypothesis answers the following affirmatively: given error $\epsilon>0$, dimension $k$, degree $d$ and confidence $1-\delta$, is there an algorithm that takes a number, depending on these parameters, of samples from $\mu$ and with probability at least $1-\delta$ distinguishes between the following two cases (as least one must
hold):
\begin{enumerate}
\item there is $Z\in \HH(n, k, 2d)$ such that 
\[ \int_{D^n}\mathrm{dist}(x, Z)^2\mu(\mathrm{d}x)< \tfrac{3}{2}\epsilon.\]
\item for all $Z\in \HH(n, k, d)$
\[\int_{D^n}\mathrm{dist}(x, Z)^2\mu(\mathrm{d}x)> \tfrac{1}{2}\epsilon.\]
\end{enumerate}

The basic statistical question that underlies this problem is: what is the minimal number of samples (the so--called ``sample complexity'') needed for testing the hypothesis that data, which are supposed to be distributed according to the unknown measure $\mu$, lie near an algebraic variety with bounded dimension and degree?

Quantitatively, this is studied by introducing the \emph{risk} functional $\RR(\cdot, \mu):\mathcal{H}\to \R$, 
\[\RR(Z, \mu):=\int_{D^n}\mathrm{dist}(x, Z)^2\mu(\mathrm{d}x),\]
and the abstract goal is to decide if the infimum of $\RR(\cdot, \mu)$ on our hypothesis class is below a certain threshold. This  problem can be approached following the conceptual strategy of \cite{Fefferman} as follows. The process of sampling $m$ independent points\footnote{We use the following convention: a point in $\mathbb{R}^n$ is denoted by $x = (x_1, \ldots, x_n)$, where $x_i$ indicates the $i$-th coordinate. Superscripts are used to index samples, so $x^i$ denotes the $i$-th sample.} $x^1, \ldots, x^m$ from $\mu$ defines a random measure
\[\mu_m:=\frac{1}{m}\sum_{i=1}^m \delta_{x^i},\]
called the random empirical measure. The \emph{empirical risk} is then defined as the value of the risk with respect to the empirical measure:
\[\RR(Z, \mu_m)=\frac{1}{m}\sum_{i=1}^m\mathrm{dist}(x^i, Z)^2.\]
Note that, once $\mu$ is fixed, the empirical risk is a random functional on the class $\mathcal{H}(n,k,d)$ (the randomness coming from $\mu$ being a probability measure). 

The next theorem, which corresponds to \cref{thm:Nvariation} below, is one of our main results. It reduces the above problem to deciding if the infimum of the empirical risk is below a certain threshold, as long as the number of samples is large enough.

%The main idea of the ERM strategy is to produce the candidate hypotheses $\mathcal{A}(\underline{x}_m)$ through an empirical risk minimization algorithm. The value of the actual risk on this candidate is then recovered through the following probabilistic statement, which is one of our main results.

\begin{theorem}\label{thm:Nvariationintro}For every $n,k,d\in \N$ there exist $c_0,c_1, c_2, c_3>0$ such that the following statement is true. Let $\mu$ be a Borel probability measure on $D^n$ and for all $m\in \N$ denote by $\mu_m$ the corresponding random empirical measure. For all $0<\epsilon< c_0$ and $0<\delta<1$, 
if
\[\label{eq:boundnkdintro}m\geq \frac{c_3}{\epsilon^2}\log\left(\frac{c_1 \epsilon^{-c_2}}{\delta}\right),\]
then
\[\mathbb{P}\left\{\sup_{Z\in \HH(n,k,d)}\left|\RR(Z, \mu_m)-\RR(Z,\mu)\right|> \epsilon\right\}<\delta.\]
\end{theorem}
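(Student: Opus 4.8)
The plan is to prove the statement as a \emph{uniform law of large numbers} for the risk over $\HH(n,k,d)$, by the classical discretization scheme: reduce the uniform control of $|\RR(Z,\mu_m)-\RR(Z,\mu)|$ to finitely many single‑variety concentration inequalities by passing to a finite net of $\HH(n,k,d)$, bound the discretization error by a stability estimate for $Z\mapsto\RR(Z,\nu)$ valid at once for $\nu=\mu$ and $\nu=\mu_m$, and handle each net element by Hoeffding. Two remarks make the single‑variety step immediate. First, for $Z\in\HH(n,k,d)$ the set $Z\cap D^n$ is nonempty (it has dimension $k\ge0$), so for every $x\in D^n$ a nearest point of $Z$ to $x$ lies in $\overline{B}(0,3)$ and $\dist(x,Z)=\dist(x,Z\cap\overline{B}(0,3))\le\operatorname{diam}(D^n)=2$; hence $\dist(x,Z)^2\in[0,4]$. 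Second, for fixed $Z$ the variables $\dist(x^i,Z)^2$ are i.i.d., $[0,4]$‑valued, with mean $\RR(Z,\mu)$, so Hoeffding gives $\mathbb P\{|\RR(Z,\mu_m)-\RR(Z,\mu)|>t\}\le 2e^{-mt^2/8}$.

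The stability estimate is elementary. Since $A\mapsto\dist(x,A)$ is $1$‑Lipschitz in the Hausdorff distance and the distances occurring above are at most $2$, for all $x\in D^n$
\[\bigl|\dist(x,Z_1)^2-\dist(x,Z_2)^2\bigr|\;\le\;4\,d_{\mathrm H}\bigl(Z_1\cap\overline{B}(0,3),\,Z_2\cap\overline{B}(0,3)\bigr),\]
where $d_{\mathrm H}$ is Hausdorff distance. Integrating against any Borel probability measure supported on $D^n$ yields $|\RR(Z_1,\nu)-\RR(Z_2,\nu)|\le 4\,d_{\mathrm H}(Z_1,Z_2)$, where from now on $d_{\mathrm H}$ abbreviates the Hausdorff distance of the two varieties inside $\overline B(0,3)$; this applies to $\nu=\mu$ and, for every sample, to $\nu=\mu_m$.

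The geometric core of the argument — and the only genuine obstacle — is a quantitative bound on the metric entropy of the hypothesis class: I would show that there exist $c_1,c_2>0$, depending only on $n,k,d$, such that for all sufficiently small $\eta>0$ the space $\HH(n,k,d)$ admits, in the metric $d_{\mathrm H}$ above, an $\eta$‑net $\mathcal N_\eta\subset\HH(n,k,d)$ with $\#\mathcal N_\eta\le(c_1/\eta)^{c_2}$. The natural starting point is that, over $\R$, $Z(p_1,\dots,p_s)=Z(p_1^2+\cdots+p_s^2)$, so $\HH(n,k,d)$ is parametrized by the coefficient vector of a single polynomial of degree $\le 2d$, i.e.\ by a point of a compact space of dimension $\binom{n+2d}{n}-1$, which carries $\eta$‑nets of size polynomial in $1/\eta$. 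The obstruction is that the assignment coefficients $\mapsto Z\cap D^n$ is \emph{not} continuous for $d_{\mathrm H}$: an arbitrarily small perturbation of the coefficients can erase a component of the zero set or lower its dimension, so one cannot simply push forward a net of the parameter space. This is exactly where the quantitative study of the Hausdorff geometry of the space of real algebraic varieties enters — restricting to the stratum $\dim(Z\cap D^n)=k$ and using effective {\L}ojasiewicz‑ and semialgebraic‑type estimates for how $Z\cap\overline{B}(0,3)$ moves as the coefficients move, one upgrades the (trivial, by Blaschke compactness) total boundedness of $\HH(n,k,d)$ to the explicit modulus above. Controlling this discontinuity, equivalently the non‑compact and stratified nature of $\HH(n,k,d)$, is where essentially all of the work lies; given it, the rest is bookkeeping.

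With the entropy bound available, the proof concludes by a union bound. Set $\eta:=\epsilon/16$ and let $\mathcal N_\eta=\{Z_1,\dots,Z_N\}$ with $N\le(16c_1/\epsilon)^{c_2}$. For any $Z\in\HH(n,k,d)$ choose $Z_j\in\mathcal N_\eta$ with $d_{\mathrm H}(Z,Z_j)\le\eta$; by the stability estimate applied to $\mu$ and to $\mu_m$, $|\RR(Z,\mu_m)-\RR(Z,\mu)|\le 8\eta+|\RR(Z_j,\mu_m)-\RR(Z_j,\mu)|$, hence
\[\sup_{Z\in\HH(n,k,d)}\bigl|\RR(Z,\mu_m)-\RR(Z,\mu)\bigr|\;\le\;\tfrac{\epsilon}{2}+\max_{1\le j\le N}\bigl|\RR(Z_j,\mu_m)-\RR(Z_j,\mu)\bigr|.\]
Therefore the event in the statement forces $\max_j|\RR(Z_j,\mu_m)-\RR(Z_j,\mu)|>\epsilon/2$, which by Hoeffding and the union bound has probability at most $2N e^{-m\epsilon^2/32}\le 2(16c_1)^{c_2}\epsilon^{-c_2}e^{-m\epsilon^2/32}$. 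Requiring this to be $<\delta$ gives $m\ge\frac{32}{\epsilon^2}\log\bigl(2(16c_1)^{c_2}\epsilon^{-c_2}/\delta\bigr)$, which is of the asserted form after renaming the constants; the restriction $\epsilon<c_0$ only ensures that $\eta=\epsilon/16$ lies in the range where the entropy estimate holds. Note that because the covering number is \emph{polynomial} in $1/\eta$, the term $\log N$ contributes only $O_{n,k,d}(\log(1/\epsilon))$, so a crude union bound — without chaining — already yields the claimed $\log(\epsilon^{-c_2}/\delta)$ dependence.
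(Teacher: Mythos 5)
Your probabilistic skeleton is exactly the paper's: a Hausdorff $\epsilon/16$--net of $\HH(n,k,d)$, the elementary stability bound $|\RR(Z_1,\nu)-\RR(Z_2,\nu)|\le 4\,\mathrm{dist}_H(Z_1,Z_2)$ applied to both $\mu$ and $\mu_m$, Hoeffding on each net center, and a union bound; your constants ($2e^{-m\epsilon^2/32}$ per center, $m\ge \frac{32}{\epsilon^2}\log(2N/\delta)$) match the paper's Lemma on Hausdorff balls and the final computation. That part is correct and needs no change.

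The genuine gap is the covering number bound $\#\mathcal N_\eta\le (c_1/\eta)^{c_2}$, which you state as the crux but do not prove, and for which the sketch you offer would not go through as written. Pushing forward a net of the coefficient space fails, as you note, because $p\mapsto Z(p)\cap D^n$ is discontinuous; but ``restricting to the stratum $\dim(Z\cap D^n)=k$ and using effective {\L}ojasiewicz-type estimates'' does not repair this, since the discontinuity persists on that stratum (components can still vanish under perturbations within it) and no modulus of continuity exists there. The paper's actual mechanism has two ingredients you do not supply. First, a quantitative approximation theorem: every $Z\in\HH(n,k,d)$ is $\epsilon$--close in Hausdorff distance to a \emph{smooth complete intersection} $Z(p_\epsilon)\cap D^n$ with $\deg\le 2d$ (note the degree doubling) and, crucially, with $\mathrm{dist}(p_\epsilon,\Sigma(2d))\ge\|p_\epsilon\|\,\alpha\epsilon^\beta$, i.e.\ quantitatively far from the discriminant; this rests on the Basu--Lerario approximation of algebraic sets by complete intersections together with the definability of Hausdorff limits in semialgebraic families and a Puiseux-series argument to extract the power $\epsilon^\beta$. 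Second, a quantitative Thom isotopy lemma: $\kappa(p)=Z(p)\cap D^n$ is locally Lipschitz on the complement of the discriminant with constant $L/\mathrm{dist}(p,\Sigma)$. Only after these two steps can one take a net of the unit ball of polynomials at scale $\sim\epsilon^{\beta+1}$, keep the centers at distance $\ge\alpha(\epsilon/2)^\beta$ from $\Sigma(2d)$, and push it forward to an $\epsilon$--net of $\HH(n,k,d)$ of polynomial cardinality. Without the degree doubling and the explicit lower bound on the distance to the discriminant, the entropy estimate --- and hence the theorem --- is not established.
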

The main idea for this result is to pair the concentration bound from \cref{lemma:balls} (essentially Hoeffding's inequality) with an $\epsilon$--covering argument for the space $\HH(n,k,d)$ with respect to the Hausdorff metric: since $\HH(n,k,d)$ is precompact in the space of compact subsets of the disk with this metric, such a finite covering always exists, but the key point is controlling its cardinality; roughly speaking it should grow in a controlled way as $\epsilon\to 0$, which is the content of \cref{coro:netintro}, discussed below.
\subsection{Limitations of Manifold Learning in the algebraic setting}\label{sec:introexisting}

In principle, existing results from the manifold learning literature—such as \cite[Theorem 4]{Narayanan}—can be applied to the class $\HH(n,k,d)$ of algebraic sets. These results provide sample complexity bounds for empirical risk minimization (ERM) over families of sets that admit covering number estimates. However, while promising, this black-box application has intrinsic limitations when extended to singular or more general algebraic structures.

To illustrate how far existing techniques can go, consider the following: from \cite[Theorem 2.6]{KileelZhang}, we know that the number of $\epsilon$--balls needed to cover an algebraic set $Z \in \HH(n,k,d)$ satisfies the bound
\[
N(\epsilon, Z) \leq \left(\frac{4kn^{3/2}}{\epsilon}\right)^k (4d)^{n-k}.
\]
This gives an estimate for the covering number of the entire class $\HH(n,k,d)$ and can be plugged into \cite[Theorem 4]{Narayanan} to obtain an explicit bound on the number of samples required for empirical risk minimization. The resulting sample complexity satisfies:
\[
m \le C\left(\frac{(kn^{3/2})^k d^{n-k}}{4^{2k-n} \epsilon^{k+2}} \min\left\{ \frac{(kn^{3/2})^k d^{n-k}}{4^{2k-n} \epsilon^k}, \frac{1}{\epsilon^2} \log^4\left( \frac{(kn^{3/2})^k d^{n-k}}{4^{2k-n} \epsilon^{k+1}} \right) \right\} + \frac{1}{\epsilon^2} \log\frac{1}{\delta} \right),
\]
with $C>0$ some explicit constant.
This bound has the advantage of being fully explicit in terms of the parameters $(n, k, d)$, but deteriorates rapidly as $\epsilon \to 0$, with a worst--case dependence of order $O(\epsilon^{-k-4})$.

Moreover, existing methods such as those in \cite{Narayanan, Fefferman} rely heavily on geometric regularity. Their algorithm is designed for classes like
\[
\HH_{\mathrm{sm}}(k, V, \rho) := \left\{ Z \subset D^n \,\middle|\, \dim(Z) = k,\ \mathrm{vol}(Z) \leq V,\ \mathrm{reach}(Z) \geq \rho \right\},
\]
where uniform control over geometry—via volume and reach—ensures compactness of the class itself and bounded curvature for its elements. However, such assumptions break down in the algebraic setting: even smooth algebraic sets of fixed degree can have arbitrarily small reach, and the family of all such sets is not compact in whatever reasonable topology. We refer the reader to \cref{sec:sample} below for a more detailed discussion.

Unlike previous approaches, our framework avoids any regularity assumptions on curvature, volume, or reach. \cref{thm:Nvariationintro} shows that uniform convergence of empirical risk holds over $\HH(n,k,d)$, with sample complexity scaling as $O(\epsilon^{-2} \log(\epsilon^{-1}))$, significantly improving over standard covering number bounds. This is made possible by a quantitative analysis of the Hausdorff geometry of real algebraic sets, which we develop in the next section.

\subsection{Hausdorff geometry of complete intersections}
The main technical tools underlying our study rely on the geometry of real complete intersections, for which we prove results of independent interest. They belong to the emerging field of \emph{metric algebraic geometry}, which aims to join methods and ideas of (metric) differential geometry with its (real) algebraic counterpart; see \cite{metricalgebraicgeometry, lecturenotesantonio}. We elaborate these results here.

Consider the compact metric space $\mathcal{K}(D^n)$ consisting of compact subsets of the unit disk, equipped with the Hausdorff distance, denoted by $\operatorname{dist}_H$. Let $\Pol$ denote the vector space of all $c$--tuples $p=(p_1, \ldots, p_c)$ of real polynomials in $n$ variables, each polynomial having degree at most $d$. This space has dimension $c\binom{n+d}{d}$, corresponding to the number of polynomial coefficients. Every $p \in \Pol$ defines a real algebraic set:
\[
Z(p)=\left\{x \in \R^n  \,\bigg|\, p_1(x)=\cdots=p_c(x)=0\right\}.
\]
Since some polynomial systems may have no solutions in  $D^n$, we introduce the set
\[
P(d):=\left\{p \in \Pol \,\bigg|\, Z(p) \cap D^n \neq \emptyset\right\},
\]
and define the map
\[
\kappa: P(d) \to \mathcal{K}(D^n), \quad p \mapsto Z(p) \cap D^n,
\]
which associates to each polynomial tuple its zero set on the disk. Note that this map is generally not continuous, since solutions may disappear under small perturbations of polynomial coefficients (see \cref{lemma:continuity}).

We also introduce the \emph{discriminant} $\Sigma(d) \subseteq P(d)$, consisting of polynomial tuples for which the equation $p=0$ fails to be regular on $D^n$ (interpreted as a manifold with boundary, see \cref{def:discriminant}). The discriminant is a closed subset, and its complement
\[
T(d):=P(d) \setminus \Sigma(d)
\]
consists exactly of polynomials describing smooth, nonempty complete intersections\footnote{The reader can take this as definition.} in $D^n$. Hence, for $p \in T(d)$, the set $Z(p)$ is a smooth manifold of dimension $(n-c)$, possibly with a smooth boundary on $\partial D^n$.

Although $\kappa$ is not globally continuous, we show that it is locally Lipschitz on $T(d)$, with a Lipschitz constant inversely proportional to the distance from the discriminant (see \cref{thm:thom} in the body of the paper).

\begin{theorem}\label{thm:thomintro}
For every $n,k,d \in \N$, there exists $L>0$ such that for every $p \in T(d)$ and sufficiently small $\|p-q\|$,

$$
\mathrm{dist}_H\left(\kappa(p), \kappa(q)\right)\leq \frac{L}{\mathrm{dist}(p, \Sigma(d))}\cdot \|p-q\|.
$$

\end{theorem}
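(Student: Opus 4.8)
The plan is to reduce the statement to a uniform, quantitative implicit function theorem in which the distance to the discriminant enters only through a condition-number estimate. Fix any norm on $\Pol$. Both sides of the asserted inequality are invariant under the simultaneous rescaling $p\mapsto\lambda p$, $q\mapsto\lambda q$ ($\lambda\neq0$), since $\Sigma(d)$ is a cone and $Z(\lambda p)=Z(p)$, so we may assume $\|p\|=1$; then the first and second derivatives of $p$ are bounded on $D^n$ by a constant depending only on $n,d$ (monomials of degree $\le d$ and their derivatives being bounded on $D^n$), and the same holds for $q$ once $\|p-q\|\le 1$.

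The conceptual heart is a \emph{quantitative regularity estimate}. For $p\in T(d)$ put
\[
\sigma(p):=\min\Big\{\ \min_{x\in Z(p)\cap\mathrm{int}\,D^n}\sigma_{\mathrm{min}}\big(D_xp\big),\ \ \min_{x\in Z(p)\cap\partial D^n}\sigma_{\mathrm{min}}\big(D_xp|_{T_x\partial D^n}\big)\ \Big\}
\]
(with $\min\emptyset=+\infty$), the smallest relevant normal Jacobian singular value along $Z(p)\cap D^n$; it is finite and strictly positive precisely because $p\in T(d)$. I claim $\sigma(p)\ge c_*\,\mathrm{dist}(p,\Sigma(d))$ for a constant $c_*=c_*(n,k,d)>0$. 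Indeed, let $x_0$ realise the minimum. If $x_0$ is interior, Eckart--Young yields $\Delta\in\R^{c\times n}$ with $\|\Delta\|=\sigma(p)$ such that $D_{x_0}p+\Delta$ is rank-deficient; the tuple $\tilde p_j(x):=p_j(x)+\langle\Delta_j,x-x_0\rangle$ still vanishes at $x_0$, has degree $\le\max(d,1)$, has non-surjective differential at $x_0$, hence lies in $\Sigma(d)$, and $\|\tilde p-p\|\le C\|\Delta\|=C\sigma(p)$ since $|\langle\Delta_j,x_0\rangle|\le\|\Delta_j\|$ on $D^n$. If $x_0\in\partial D^n$ one argues identically inside $T_{x_0}\partial D^n$, extending $\Delta$ by zero in the radial direction, so that $\tilde p$ fails to be regular along $\partial D^n$ and again $\tilde p\in\Sigma(d)$ (see \cref{def:discriminant}). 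In both cases $\mathrm{dist}(p,\Sigma(d))\le\|\tilde p-p\|\le C\sigma(p)$, which is the claim; this is the ``easy half'' of a condition-number theorem, that the distance to ill-posedness is at most the reciprocal condition number.

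To finish, fix $q$ with $\|p-q\|$ small relative to $\mathrm{dist}(p,\Sigma(d))$. Since $T(d)$ is open and $\Sigma(d)$ is closed, $q\in T(d)$ and $\mathrm{dist}(q,\Sigma(d))\ge\tfrac12\mathrm{dist}(p,\Sigma(d))$, so $\sigma(q)\ge\tfrac{c_*}{2}\mathrm{dist}(p,\Sigma(d))$. Given $x\in Z(p)\cap D^n$, apply a quantitative implicit function theorem to $q$ near $x$: because $\|q(x)\|=\|q(x)-p(x)\|\le C_1\|p-q\|$, $\sigma_{\mathrm{min}}(D_xq)\ge\tfrac12\sigma(p)$, and the second derivatives of $q$ are bounded on $D^n$ by $C_2$, Newton's method produces a zero $y$ of $q$ with $\|y-x\|\le\frac{C_3\|p-q\|}{\sigma(p)}\le\frac{C_3}{c_*}\cdot\frac{\|p-q\|}{\mathrm{dist}(p,\Sigma(d))}$, provided $\|p-q\|$ is below a threshold that may depend on $p$ (which the statement permits). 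When $x$ is interior this $y$ lies in $D^n$; when $x$ is on or near $\partial D^n$ one instead relocates $x$ within $Z(p)\cap\partial D^n$, using the regularity of $p|_{\partial D^n}$ and the transversality of $Z(p)$ to $\partial D^n$, to obtain $y\in Z(q)\cap\partial D^n\subseteq D^n$ at comparable distance. Thus every point of $\kappa(p)$ lies within $\frac{L}{\mathrm{dist}(p,\Sigma(d))}\|p-q\|$ of $\kappa(q)$ for a suitable $L=L(n,k,d)$; exchanging the roles of $p$ and $q$ (legitimate since $\mathrm{dist}(q,\Sigma(d))$ is comparable to $\mathrm{dist}(p,\Sigma(d))$) gives the reverse inclusion, whence $\mathrm{dist}_H(\kappa(p),\kappa(q))\le\frac{L}{\mathrm{dist}(p,\Sigma(d))}\|p-q\|$. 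Equivalently, one may integrate the minimal-norm vector field solving $D_xp_t\cdot v=(p-q)(x)$ along $p_t:=(1-t)p+tq$, taken tangent to $\partial D^n$ on the boundary — a quantitative Thom isotopy argument — obtaining both inclusions at once together with a diffeomorphism $\kappa(p)\cong\kappa(q)$.

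The step I expect to be the main obstacle is the uniform treatment of the boundary $\partial D^n$. Away from it the quantitative implicit function theorem is routine, but a zero of $p$ lying extremely close to $\partial D^n$ could a priori be pushed out of $D^n$ by the perturbation, so one must genuinely exploit that $Z(p)$ is a manifold with boundary meeting $\partial D^n$ transversally, with a collar whose width is bounded below by a constant times $\sigma(p)$, in order to relocate such zeros along the sphere. Arranging this collar estimate and the transition between the interior and the spherical implicit function theorems with constants depending only on $n,k,d$ is the technical core; the condition-number estimate of the second paragraph is precisely what converts these $(n,k,d)$-uniform constants into the stated $1/\mathrm{dist}(p,\Sigma(d))$ dependence.
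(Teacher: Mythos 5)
Your overall architecture matches the paper's: a condition--number estimate converting $\mathrm{dist}(p,\Sigma(d))$ into a lower bound on the relevant Jacobian singular values (the paper's \cref{propo:distdiscgen}), followed by a quantitative deformation argument (the paper's \cref{propo:thom}, a quantitative Thom isotopy). Your Eckart--Young perturbation of the linear part is a correct and rather more elementary route to the half of \cref{propo:distdiscgen} that is actually used; the paper instead homogenizes, passes to $S^n$, and invokes the $O(n+1)$--invariance of the Bombieri--Weyl product together with Raffalli's exact formula. One caveat: for the deformation you need lower bounds not only at points of $Z(p)$ but along the zero sets of every interpolant $f_t=(1-t)p+tq$, whose zeros are only approximate zeros of $p$; this is why the paper controls the combined quantity $\left(\|p(z)\|^2+\sigma_{\min}(Jp(z))^2\right)^{1/2}$ over all $z\in D^n$ rather than $\sigma_{\min}$ over $Z(p)$ alone. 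Your $\sigma(p)$ can be upgraded to this using the uniform $C^2$ bounds coming from $\|p\|=1$, but that step is needed and not supplied.

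The genuine gap is in the boundary step, exactly where you predicted trouble, and it hides in the phrase ``at comparable distance''. If $x\in Z(p)\cap D^n$ lies at depth $\eta$ from $\partial D^n$ and the interior Newton iterate exits the disk, your plan is to relocate $x$ into $Z(p)\cap\partial D^n$ and then run the spherical IFT. But when $Z(p)$ meets $\partial D^n$ at a shallow angle --- precisely the regime in which $\sigma_{\min}(Jp(x')|_{x'^\perp})$ is small --- the distance from $x$ to $Z(p)\cap\partial D^n$ is of order $\eta/\sigma$, not $\eta$: for $p=y-1+\delta$ in the plane, the midpoint of the chord is at depth $\delta$ from the circle but at distance about $\sqrt{2\delta}\approx\delta/\sigma$ from the chord's endpoints, where $\sigma=\sigma_{\min}(Jp|_{x'^\perp})\approx\sqrt{2\delta}$. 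Since the problematic regime is $\eta\lesssim\|p-q\|/\sigma$, the composite estimate degrades to $\|p-q\|/\sigma^2$, hence at best $\|p-q\|/\mathrm{dist}(p,\Sigma(d))^2$, which is weaker than the stated bound whenever $p$ is close to the discriminant. The fix is exactly your closing ``equivalently'' sentence --- which is therefore not an equivalent reformulation but the actual proof: build the minimal--norm vector field solving $Jf_t\cdot v=p-q$ subject to $v\in x^\perp$ on a whole neighborhood of $Z(F)\cap(\partial D^n\times I)$, so that near--boundary points move tangentially to the concentric spheres (never leaving $D^n$) with speed at most $\|p-q\|_{C^1}/\sigma_{\min}(Jf_t|_{x^\perp})$, and glue with the interior field by a partition of unity. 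That is the paper's \cref{propo:thom}, and it delivers the $1/\mathrm{dist}(p,\Sigma(d))$ dependence in one stroke for interior and boundary points alike.
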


Here the norm $\|\cdot\|$ on the space of polynomials is the Bombieri--Weyl norm (see \cref{def:BW}), and $\operatorname{dist}$ refers to the induced distance. This result is proved following in a quantitative way the proof of the classical Thom's Isotopy Lemma, detailed in \cref{propo:thom}.

(Naturally, being away from the discriminant controls extrinsic geometric properties. For instance, we prove in \cref{propo:reach} that the reach of a smooth complete intersection is bounded from below by a constant multiple of its distance to the discriminant.)

Near the discriminant, the behavior of $\kappa$ is more delicate. Previously, Basu and Lerario \cite[Theorem 2.10]{BasuLerario} proved that the Hausdorff closure of complete intersections of degree $2d$ contains the discriminant in degree $d$:
\[
\kappa(\Sigma(d)) \subseteq \overline{\kappa(T(2d))}^{H}.
\]
We strengthen this result by establishing a quantitative version (\cref{thm:tau} below).

\begin{theorem}\label{thm:tauintro}
For each $n,k,d \in \mathbb{N}$, there exist $\alpha, \beta, \epsilon_0>0$ such that, for every $0<\epsilon<\epsilon_0$ and every $Z \in \HH(n,k,d)$ with $\dim(Z)=k=n-c$, one can find a polynomial tuple $p_\epsilon \in P(2d)$ satisfying:

$$
\mathrm{dist}_H(Z, Z(p_\epsilon)\cap D^n)\leq \epsilon\quad \text{and}\quad \mathrm{dist}(p_\epsilon,\Sigma(2d))\geq \|p_\epsilon\|\alpha \epsilon^\beta.
$$

\end{theorem}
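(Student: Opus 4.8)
The plan is to construct $p_\epsilon$ explicitly from a set of defining equations for $Z$, to estimate the Hausdorff distance through a \L{}ojasiewicz inequality, and to bound the distance to the discriminant through a quantitative transversality (condition--number) argument, in the spirit of the proof of \cref{thm:thomintro} and of \cite{BasuLerario}. Concretely: write $Z=\tilde Z\cap D^n$ and let $f_1,\dots,f_m$ be a basis of the finite--dimensional space $I(\tilde Z)_{\le d}$ of degree--$\le d$ polynomials vanishing on $\tilde Z$, normalized so that $\|f_i\|=1$; since $\deg Z\le d$ one has $Z(f_1,\dots,f_m)=\tilde Z$, and $m\le\binom{n+d}{d}$ depends only on $n,d$. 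Set $g:=\sum_{i}f_i^2$, a nonnegative polynomial of degree $\le 2d$ with $Z(g)=\tilde Z$, choose $c-1$ generic elements $r_1,\dots,r_{c-1}\in I(\tilde Z)_{\le d}$ (random unit linear combinations of the $f_i$), and for a parameter $\delta>0$ put $p_\delta:=(g-\delta,\,r_1,\dots,r_{c-1})$, a $c$--tuple of polynomials of degree $\le 2d$. Since $g,r_1,\dots,r_{c-1}$ all vanish on $\tilde Z$, the slice $Y:=Z(r_1,\dots,r_{c-1})$ contains $\tilde Z$, and for generic $r_i$ it is, near the smooth locus of $\tilde Z$, a smooth $(n-c+1)$--manifold containing $\tilde Z$ as a hypersurface. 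I would then take $\delta=\delta(\epsilon)$ a suitable power of $\epsilon$ and $p_\epsilon:=p_{\delta(\epsilon)}$.

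For the Hausdorff estimate, a \L{}ojasiewicz inequality for the nonnegative polynomial $g$ — with constant $C_1$ and exponent $\theta_1$ that can be taken uniform over the bounded--complexity family of all such $g$ — gives $\mathrm{dist}(x,\tilde Z)\le C_1\,g(x)^{\theta_1}$ on $D^n$; since $Z(p_\delta)\subseteq\{g=\delta\}$, every point of $Z(p_\delta)\cap D^n$ lies within $C_1\delta^{\theta_1}$ of $\tilde Z$, no matter what $Y$ looks like away from $\tilde Z$. Conversely, inside $Y$ the function $g|_Y\ge 0$ still vanishes exactly on $\tilde Z$, so the same inequality applied within $Y$ shows that the boundary inside $Y$ of the semialgebraic tube $\{g\le\delta\}\cap Y$ — a subset of $Z(p_\delta)$ — meets every $O(\delta^{\theta_1'})$--neighbourhood of $\tilde Z$. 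Hence $\mathrm{dist}_H(Z,Z(p_\delta)\cap D^n)\le C\delta^{\theta}$, and choosing $\delta$ an appropriate power of $\epsilon$ makes this $\le\epsilon$. The usual complications at $\partial D^n$ — interpreting distances relative to $D^n$ and arranging transversality of the tube to the boundary — would be handled exactly as in \cite{BasuLerario}.

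To land in $T(2d)$: by Sard's theorem, for almost every small $\delta$ the value $\delta$ is regular for $g|_Y$ and for $g|_{Y\cap\partial D^n}$, and for such $\delta$ the tuple $p_\delta$ defines a smooth, nonempty complete intersection in $D^n$, so $p_\delta\in T(2d)\subseteq P(2d)$. The remaining, and main, point is the lower bound on $\mathrm{dist}(p_\delta,\Sigma(2d))$: by the condition--number estimates underlying \cref{thm:thomintro} (see also \cite{BasuLerario}), this distance is bounded below by $\|p_\delta\|$ times a power of the least singular value, along $Z(p_\delta)$, of the normalized Jacobian of $p_\delta$, whose rows are $\nabla g=2\sum_i f_i\nabla f_i$ and $\nabla r_1,\dots,\nabla r_{c-1}$. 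On $Z(p_\delta)$ one has $g=\delta$, so the \L{}ojasiewicz gradient inequality for $g$ (valid since $\tilde Z\subseteq\mathrm{Crit}(g)$) gives $\|\nabla g\|\gtrsim\delta^{\rho}$ for some $\rho\in(0,1)$; a generic choice of the $r_i$ makes $\nabla r_1,\dots,\nabla r_{c-1}$ uniformly independent along the smooth locus of $\tilde Z$, hence along $Z(p_\delta)$ for $\delta$ small; and a further genericity argument keeps the component of $\nabla g$ transverse to their span of size $\gtrsim\delta^{\beta'}$. Together these bound the least singular value below by a power of $\delta$, hence of $\epsilon$, and since the sum--of--squares structure forces $\|g\|\gtrsim 1$ while $\|p_\delta\|$ stays bounded as $\delta\to0$, one obtains $\mathrm{dist}(p_\epsilon,\Sigma(2d))\ge\|p_\epsilon\|\,\alpha\epsilon^{\beta}$.

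Uniformity of $\alpha,\beta,\epsilon_0$ over all $Z\in\HH(n,k,d)$ would follow because every object involved — the spaces $I(\tilde Z)_{\le d}$, the polynomials $g$, the slices $Y$, and the attached \L{}ojasiewicz data — varies in a semialgebraic family of complexity bounded in terms of $n,k,d$ only, so the relevant exponents and constants can be taken uniform by the standard arguments on \L{}ojasiewicz inequalities with parameters. I expect the hardest step to be exactly the quantitative transversality estimate in the previous paragraph: controlling, with an explicit polynomial rate in $\delta$ and uniformly in $Z$, how far the Jacobian of $p_\delta$ remains from rank--deficiency along $Z(p_\delta)$ — in particular near the singular strata of $\tilde Z$, where the tube and transversality arguments must be carried out on a Whitney stratification of $\tilde Z$ rather than on $\tilde Z$ itself, and where making the Bertini/Sard genericity choices effective requires the most care.
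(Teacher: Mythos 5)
Your proposal takes a genuinely different route from the paper, and in its present form it has a real gap at exactly the step you flag as the hardest one. The paper does not construct $p_\epsilon$ explicitly at all. It introduces the function
\[
h(\delta):=\sup_{C\in\overline{\kappa(T(2d))}^{H}}\ \inf_{p\in T(2d),\ \mathrm{dist}(p,\Sigma(2d))\geq\delta\|p\|}\mathrm{dist}_H\bigl(C,\kappa(p)\bigr),
\]
shows that $h$ is semialgebraic (because the Hausdorff distance between members of a semialgebraic family is a semialgebraic function of the parameters, \cref{lemma:distH}), and proves $\lim_{\delta\to0}h(\delta)=0$ by a compactness and curve--selection argument that uses the definability of Hausdorff limits \cite{definability} together with the boundary--transversal strengthening of Basu--Lerario (\cref{thm:BasuLerario}), which places every $Z\in\HH(n,k,d)$ in $\overline{\kappa(T(2d))}^{H}$. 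The polynomial rate $h(\delta)\leq\alpha_0\delta^{\beta_0}$ then comes for free from the Puiseux expansion of one--variable semialgebraic functions (\cref{lemma:function}); inverting it gives $\alpha,\beta$. In other words, the entire quantitative content of the theorem is extracted from o-minimality, and no uniform \L{}ojasiewicz or transversality estimate for an explicit family ever has to be proved.

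The gap in your argument is the lower bound on $\mathrm{dist}(p_\delta,\Sigma(2d))$. First, the reduction is not quite the one you cite: \cref{propo:distdiscgen} is an \emph{upper} bound for $\mathrm{dist}(p,\Sigma)$; the lower bound you need has the form $\mathrm{dist}(p,\Sigma)\geq c\min_{z\in D^n}\bigl(\|p(z)\|^2+\sigma_{\min}(Jp(z))^2\bigr)^{1/2}$ (plus the analogous boundary term for $\Sigma_{\partial D}$), and the minimum runs over \emph{all} of $D^n$, not just over $Z(p_\delta)$; points where $\|p_\delta\|$ is small without being zero must also be controlled, which again requires a \L{}ojasiewicz--type inequality for the tuple $p_\delta$ with uniform constants. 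Second, and more seriously, the claim that a generic choice of $r_1,\dots,r_{c-1}$ keeps $\nabla r_1,\dots,\nabla r_{c-1}$ uniformly independent and keeps the component of $\nabla g$ transverse to their span of size $\gtrsim\delta^{\beta'}$, \emph{uniformly over all} $Z\in\HH(n,k,d)$ \emph{and in particular near the singular strata of} $\tilde Z$, is precisely the quantitative statement the theorem is asserting; "generic" gives you a qualitative transversality for each fixed $Z$ and $\delta$, but produces no rate in $\delta$ and no uniformity in $Z$ (at a singular point of $\tilde Z$ the span of $\{\nabla f:\ f\in I(\tilde Z)_{\leq d}\}$ can degenerate arbitrarily badly relative to the local geometry of $Y$ and of $\{g=\delta\}$). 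Appealing to "\L{}ojasiewicz inequalities with parameters" does not close this, because the exponent you need couples the gradient inequality for $g$ with the angle between $\nabla g$ and the $\nabla r_i$ along a $\delta$--dependent level set, and no uniform such estimate is known to me in this generality. This is exactly the difficulty the paper sidesteps by making the whole quantity $h(\delta)$ definable and invoking Puiseux, at the cost of losing any explicit control on $\alpha$ and $\beta$.
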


The quantitative nature of this theorem reflects the definability of Hausdorff limits within semialgebraic families, a classical result we invoke in its formulation from \cite{definability}.

Combining \cref{thm:thomintro} and \cref{thm:tauintro}, we achieve explicit control over the compactness properties of $\HH(n,k,d)$ within $\mathcal{K}(D^n)$ (see \cref{coro:net} below).

\begin{theorem}\label{coro:netintro}
For every $n,k,d$, there exist $\epsilon_0,a_1,a_2>0$ such that for all $0<\epsilon<\epsilon_0$, there exist polynomials $q_1, \ldots, q_{\nu(\epsilon)}\in P(2d)\setminus \Sigma(2d)$ satisfying
$$
\HH(n,k,d) \subseteq \bigcup_{i=1}^{\nu(\epsilon)}B_H(Z(q_i)\cap D^n, \epsilon),\quad \text{with}\quad \nu(\epsilon)\leq a_1\epsilon^{-a_2}.
$$
\end{theorem}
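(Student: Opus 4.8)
The plan is to combine the two quantitative inputs \cref{thm:tauintro} and \cref{thm:thomintro}. By \cref{thm:tauintro}, every $Z\in\HH(n,k,d)$ (with $c:=n-k$) can be approximated to within $\mathrm{dist}_H$ by a \emph{single} smooth complete intersection $\kappa(p_Z)$ of degree $2d$ whose defining tuple $p_Z$ sits at distance at least $\|p_Z\|\alpha\epsilon^\beta$ from the discriminant $\Sigma(2d)$; and by \cref{thm:thomintro}, the map $\kappa$ is Lipschitz near $p_Z$ with constant $L/\dist(p_Z,\Sigma(2d))$, which is only polynomially large in $1/\epsilon$. So it should suffice to replace the (infinitely many) tuples $\{p_Z\}_Z$ by a finite net in coefficient space whose mesh is small enough that $\kappa$ displaces each center by at most $O(\epsilon)$: since the Lipschitz constant is $O(\epsilon^{-\beta})$, a mesh of order $\epsilon^{\beta+1}$ works, and a net of that mesh inside a bounded set has only polynomially many points. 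I will work on the unit sphere $S$ of the space of $c$-tuples of degree-$2d$ polynomials in $n$ variables, of dimension $\ell:=c\binom{n+2d}{2d}$; rescaling tuples to unit Bombieri--Weyl norm is harmless, since $Z(p)$ depends only on $p$ up to a nonzero scalar and $\Sigma(2d)$, $P(2d)$ are cones, so $\dist(\cdot,\Sigma(2d))$ is $1$-homogeneous.

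First I would fix the tolerance at $\epsilon/3$ in \cref{thm:tauintro}: for each $Z\in\HH(n,k,d)$ this yields $p_Z\in P(2d)$, which after rescaling I take with $\|p_Z\|=1$, satisfying $\mathrm{dist}_H(Z,\kappa(p_Z))\le\epsilon/3$ and $\dist(p_Z,\Sigma(2d))\ge\alpha(\epsilon/3)^\beta$; in particular $p_Z\in T(2d)$. Hence the set
\[
A_\epsilon:=\bigl\{\,q\in P(2d)\cap S\ :\ \dist(q,\Sigma(2d))\ge\alpha(\epsilon/3)^\beta\,\bigr\}
\]
contains a representative of the approximant of every $Z$. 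Here $\alpha,\beta$ (and a threshold $\epsilon_0'$) are the constants furnished by \cref{thm:tauintro} for the data $(n,k,d)$.

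Next I would choose the mesh $\rho:=\tfrac{\alpha}{3^{\beta+1}L}\,\epsilon^{\beta+1}$, with $L=L(n,k,2d)$ from \cref{thm:thomintro}, and take $\{q_1,\dots,q_{\nu(\epsilon)}\}$ to be a maximal $\rho$-separated subset of $A_\epsilon$; it is then a $\rho$-net of $A_\epsilon$, lies in $A_\epsilon\subseteq P(2d)\setminus\Sigma(2d)$ as required, and a disjointness/volume count of the balls $B(q_i,\rho/2)$ inside the radius-$2$ ball of $\R^\ell$ gives $\nu(\epsilon)\le(4/\rho)^\ell=a_1\epsilon^{-a_2}$ with $a_2=(\beta+1)\ell$ and $a_1=(4\cdot3^{\beta+1}L/\alpha)^\ell$. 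Finally, given $Z$ I pick $q_i$ with $\|q_i-p_Z\|\le\rho$; since $q_i\in T(2d)$, $\dist(q_i,\Sigma(2d))\ge\alpha(\epsilon/3)^\beta$, and $\rho=O(\epsilon^{\beta+1})$ is — for $\epsilon$ below a threshold $\epsilon_0$ depending only on $n,k,d$ — far smaller than $\dist(q_i,\Sigma(2d))$, \cref{thm:thomintro} (in the precise form of \cref{thm:thom}) applies at $q_i$ and yields $\mathrm{dist}_H(\kappa(q_i),\kappa(p_Z))\le L\rho/\dist(q_i,\Sigma(2d))\le\epsilon/3$. The triangle inequality for $\mathrm{dist}_H$ then gives $\mathrm{dist}_H(Z,\kappa(q_i))\le\tfrac23\epsilon<\epsilon$, i.e. $Z\in B_H(\kappa(q_i),\epsilon)$; taking $\epsilon_0$ also below $3\epsilon_0'$ completes the argument.

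The hard part is the validity range in \cref{thm:thomintro}, which is stated only for ``sufficiently small $\|p-q\|$'': I need to know that the quantitative Thom isotopy estimate around a tuple $q_i$ holds on a ball whose radius is at least a fixed fraction of $\dist(q_i,\Sigma(2d))$ — equivalently at least of order $\epsilon^\beta$ — so that the much smaller net mesh $\rho\sim\epsilon^{\beta+1}$ fits comfortably inside it, uniformly over all the centers $q_i$. I expect this to be precisely what the explicit construction behind \cref{thm:thom}/\cref{propo:thom} provides (the isotopy there ought to be built on a tube of radius proportional to the distance from the discriminant). Granting this, everything else is bookkeeping of constants, each depending only on $n,k,d$ through $\ell=(n-k)\binom{n+2d}{2d}$, $L$, $\alpha$, $\beta$, and $\epsilon_0'$.
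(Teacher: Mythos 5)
Your proposal is correct and follows essentially the same route as the paper's proof of \cref{coro:net}: apply \cref{thm:tau} to place each $Z$ within $O(\epsilon)$ of a unit-norm tuple at distance $\gtrsim\alpha\epsilon^\beta$ from $\Sigma(2d)$, cover that region of coefficient space by a net of mesh $\sim\epsilon^{\beta+1}$, and transfer via the Lipschitz bound of \cref{thm:thom}, with the cardinality controlled by a volume count in the $N$-dimensional coefficient space. The validity-range issue you flag is indeed settled by \cref{propo:thom}, which quantifies ``sufficiently small'' as $\|f-p\|_{C^1}\le\mathrm{dist}(p,\Sigma)/2$, comfortably larger than your mesh $\rho\sim\epsilon^{\beta+1}$ for small $\epsilon$.
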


The results in this section provide the geometric foundation for our main statistical estimate, establishing both local stability and global approximability of smooth complete intersections under the Hausdorff metric. Together, they show that the class $\HH(n,k,d)$ admits uniform approximation by smooth varieties of bounded degree and controlled distance from the discriminant. This quantitative precompactness underlies the covering estimates used in the proof of \cref{thm:Nvariationintro} and demonstrates how algebraic structure can substitute for geometric regularity. Beyond its theoretical role, \cref{coro:netintro} also suggests a potential algorithmic strategy for testing the variety hypothesis via finite approximation. We now turn to a semialgebraic reformulation of the problem, which connects this geometric structure with decision procedures in real algebraic geometry.

\subsection{From empirical risk minimization to a semialgebraic decision problem: proof sketch of \cref{thm:decision}}\label{sec:decision}

We now explain how the probabilistic estimate in \cref{thm:Nvariationintro}, together with the geometric covering results of the previous section, leads to a decision--theoretic reformulation of the variety hypothesis problem. Specifically, we show that testing whether a distribution is close to being supported on a real algebraic variety reduces to checking the nonemptiness of a semialgebraic set defined by constraints on degree, dimension, smoothness, and empirical proximity to a finite dataset. This reformulation underlies \cref{thm:decision}, and illustrates the connection between statistical learning and real algebraic geometry.

\subsubsection{Semialgebraic decision formulation.}
The variety hypothesis problem asks whether the support of an unknown distribution $\mu$ can be approximated by a real algebraic variety of dimension $k$ and degree at most $d$. Given an accuracy parameter $\epsilon > 0$, this corresponds to testing whether
\[
\inf_{Z \in \HH(n,k,d)} \RR(Z, \mu) \leq \epsilon.
\]
Since $\mu$ is unknown, we replace the true risk with the empirical risk $\RR(Z, \mu_m)$ computed on $m$ samples $x_1, \ldots, x_m \in D^n$. By \cref{thm:Nvariationintro}, this yields a reliable estimate with high probability, provided $m$ is large enough. Therefore, we reduce to testing whether
\[
\inf_{Z \in \HH(n,k,d)} \RR(Z, \mu_m) \leq \epsilon.
\]

Each candidate variety \( Z \in \mathcal{H}(n,k,d) \) is defined as the zero set of a tuple of real polynomials \( p = (p_1, \ldots, p_c) \), each of degree at most \( d \). Conditions such as the nonemptiness of \( Z(p) \cap D^n \), its dimension being \( k \), and the empirical risk bound
\[
\mathcal{R}(Z(p), \mu_m) = \frac{1}{m} \sum_{i=1}^m \mathrm{dist}(x^i, Z(p))^2 \leq \epsilon
\]
are all \emph{definable by first-order formulas over the reals}. These conditions involve polynomial inequalities in the coefficients of \( p \), typically with several quantifiers. Consequently, the existence of a variety \( Z(p) \) satisfying these constraints corresponds to the satisfiability of a first-order formula.

Specifically, letting \( N := \dim(\Pol) + 1 \), observe firts that we may restrict\footnote{The requirement that \( p = p_1^2 + \cdots + p_s^2 \) means \( p \in \mathcal{S}_{2d} \), the cone of sums of squares of degree \(\leq 2d\). Since \(\mathcal{S}_{2d}\) is a convex cone in \(\Pol\), Carathéodory’s theorem implies that any such \( p \) can be written as a sum of at most \( \dim(\Pol) + 1 \) squares.} to polynomials of the form \( p = p_1^2 + \cdots + p_N^2 \), where \( \deg(p_i) \leq d \). Let also $\underline{x}_m:=(x^1, \ldots, x^m)\in (D^n)^m$. The problem then reduces to deciding the nonemptiness of the semialgebraic set
\[
W(\underline{x}_m, \epsilon) :=
\left\{ p \in \mathbb{R}[x_1, \ldots, x_n]_{\leq 2d} \,\middle|\,
\begin{array}{l}
p = p_1^2+\cdots+p_N^2, \\[2pt]
\dim(Z(p) \cap D^n) = k, \\[2pt]
\frac{1}{m} \sum_{i=1}^m \mathrm{dist}(x^i, Z(p) \cap D^n)^2 \leq \epsilon
\end{array}
\right\},
\]
which is semialgebraic because it is defined by a first-order formula over the reals.

To determine whether \( W(\underline{x}_m, \epsilon) \) is nonempty, one can apply effective quantifier elimination. Define the semialgebraic set
\[
V := \left\{ (\underline{x}_m, \epsilon) \in (D^n)^m \times \mathbb{R} \,\bigg|\, \exists p \in W(\underline{x}_m, \epsilon) \right\}.
\]
Effective quantifier elimination (see ~\cite[Chapter 14]{BasuPollackRoy}) guarantees that \( V \) can be expressed as a finite Boolean combination of polynomial sign conditions:
\[
V = \bigcup_{a=1}^A \bigcap_{b=1}^B \left\{ \mathrm{sign}\left( P_{ab}(\underline{x}_m, \epsilon) \right) = \sigma_{ab} \right\},
\]
where \( \sigma_{ab} \in \{-1, 0, 1\} \) and each \( P_{ab} \) is a polynomial with integer coefficients. Thus, verifying whether a particular sample satisfies the constraints reduces to evaluating these polynomials at \( (\underline{x}_m, \epsilon) \) and checking whether the corresponding sign conditions hold.

While computationally demanding, this method is provably complete and forms the theoretical foundation for algorithmic decision; see \cite{BasuPollackRoy}. By contrast, even reducing empirical risk minimization to an o-minimal class does not ensure decidability unless the class is effective—a limitation underscored by Richardson’s Theorem (see, e.g., \cite{Richardson}).

\subsubsection{Approximation-based alternative via covering.}
The structure of the class $\HH(n,k,d)$ allows for a second, more algorithmic approach based on covering arguments. The quantitative precompactness result in \cref{coro:netintro} ensures that $\HH(n,k,d)$ can be covered by balls centered at finitely many smooth varieties of degree at most $2d$, each of which lies at controlled Hausdorff distance from any target variety in the class. This leads to the following conceptual strategy:

\begin{enumerate}
    \item Construct a finite $\epsilon$-net $\{Z_1, \ldots, Z_N\} \subset \HH(n,k,d)$, with $Z_i = Z(p^{(i)})$ and $\deg(p^{(i)}_j) \leq 2d$;
    \item For each $Z_i$, compute the empirical risk $\RR(Z_i, \mu_m)$;
    \item Accept the variety hypothesis if there exists $Z_i$ such that $\RR(Z_i, \mu_m) \leq \epsilon$ and $Z_i \cap D^n \neq \emptyset$.
\end{enumerate}

\begin{remark}
A key computational subroutine in both strategies is the evaluation of $\mathrm{dist}(x, Z)$ for given $x \in D^n$ and real algebraic set $Z$. This appears in the empirical risk and in geometric regularity conditions (e.g., distance from the discriminant). The algebraic complexity of this step is governed by the Euclidean Distance Degree (EDD), which bounds the number of critical points of the squared distance function from a generic point to $Z$; see \cite{EDD}. The EDD provides a measure of the intrinsic metric complexity of $Z$ and plays a central role in algorithmic aspects of metric algebraic geometry.

\end{remark}

In summary, \cref{thm:Nvariationintro} implies that the variety hypothesis problem can be reduced to a semialgebraic decision task involving a finite number of samples. One may approach this problem either via general--purpose tools from real algebraic geometry or through a concrete covering strategy using finite approximations. Both perspectives illustrate how statistical learning and algebraic geometry can be combined to rigorously address geometric inference problems over algebraic models. The description of a more direct and efficient algorithm, and the study of
its complexity, will be the subject of a future publication.

\section{Basic notions from manifold learning theory}

This section reviews some basic notions from statistical and manifold learning theory. For the former, standard references include \cite{VladimirVapnik} and \cite{TongZhang}; for the latter, see \cite{Narayanan} and \cite{Fefferman}. 
\subsection{Learning algorithms and learnability}\label{sec:learnability}Let $\HH$ be a family of compact subsets of $D^n$, called the hypotheses class. For a Borel probability measure $\mu$ on $D^n$, we consider the \emph{risk} functional $\RR(\cdot, \mu):\HH\to \R$, defined by
\[ \label{eq:risk}\RR(Z, \mu):=\int_{D^n}\mathrm{dist}(x, Z)^2\mu(\mathrm{d}x).\]
This is a notion of how far the measure $\mu$ is from being supported on $Z$, for instance if the support of $\mu$ is a subset of $Z$, then $\RR(Z, 
\mu)=0$. For a given $\mu$, we refer to the term 
\[\RR^*:= \inf _{Z \in \mathcal{H}} \RR(Z,\mu)\]
as the \textit{optimal risk on the class} $\mathcal{H}$. 

For every $m\in \N$ we denote by $\mathcal{S}_m:=(D^n)^m$ the $m$--samples space and we define the total sample space 
\[\mathcal{S}:=\bigsqcup_{m=1}^\infty \mathcal{S}_m.\]
 A \textit{learning algorithm} is a computable map $\mathcal{A}:\mathcal{S} \to \mathcal{H}$. In other words, it is an algorithm that takes as input a finite sequence of training samples and outputs a hypothesis in $\mathcal{H}$.  On each sample space  $\mathcal{S}_m$ we have the natural product probability measure\footnote{Here and below we will use the same symbol ``$\mathbb{P}$'' to denote the probability of events with respect to the product measure on each $\mathcal{S}_m$, the case $m=1$ being just $\mu$.} associated with the measure $\mu$ and the restriction of the algorithm $\mathcal{A}$ to each $\mathcal{S}_m$ defines a random variable $Z_{m}:=\mathcal{A}\left(\underline{x}_m\right)$ with values in $\mathcal{H}$, where  $\underline{x}_m=(x^1, \ldots, x^m) \in \mathcal{S}_m$ consists of $m$ independent samples from $\mu$. A given algorithm $\mathcal{A}$ is \textit{consistent} if for all $\epsilon>0$ and $0<\delta<1$, there exists $m_{0}\in \N$ such that for all $m \geq m_0$ \begin{equation}
\mathbb{P}\bigg\{\RR(Z_m, \mu) -\inf _{Z \in \mathcal{H}} \RR(Z,\mu) > \varepsilon \bigg\}<\delta \label{cons}. \end{equation}
Thus, an algorithm is consistent if it outputs with high probability a hypothesis with almost optimal risk. The \textit{sample complexity} $m^*(\epsilon, \delta,\mathcal{A})$ of the algorithm $\mathcal{A}$ in learning the class $\mathcal{H}$ is then the minimum $m_0$ for which the condition \eqref{cons} holds. If $\mathcal{A}$ is not consistent, we set $m^*(\epsilon, \delta,\mathcal{A})=\infty$.
If there exists an algorithm for which $m^*( \epsilon, \delta,\mathcal{A})$ is finite for all choices of $\mu, \epsilon,\delta$ then we say that the hypothesis space $\mathcal{H}$ is \textit{learnable.}

\subsection{Empirical risk minimization class of algorithms}

Typical learning algorithms include the family of empirical risk minimization algorithms (ERM). Algorithms in such class are based on the so called \textit{empirical risk minimization principle}. The core idea is to consider, for a given list $\underline{x}_m=(x^1, \ldots, x^m)\in (D^n)^m$  the \emph{empirical measure} 
\[\mu_m:=\frac{1}{m}\sum_{i=1}^k\delta_{x^i}.\]
and look for solutions of the optimization problem given by the risk functional \eqref{eq:risk} by actually minimizing the \textit{empirical risk}, defined to be the value ot the risk on the empirical measure
\[\RR(Z, \mu_m)=\frac{1}{m}\sum_{i=1}^m\mathrm{dist}(x^i, Z)^2.\]
Note that, if  $\underline{x}_m$ consists of a list of independent and identically distributed samples from  a probability measure $\mu$, then the value $\RR(Z, \mu_m)$ is a random variable, namely the  empirical risk is a random functional on $\HH$. 

Because in general the minimum of the risk might not exist,  ERM algorithms, for every $\epsilon>0$ and $\underline{x}_m\in \mathcal{S}_m$, output a hypothesis $\widehat{Z}_{\underline{x}_m}(\epsilon)$ with the property that
\[\label{ERM}\RR(\widehat{Z}_{\underline{x}_m}(\epsilon), \mu_m)-\inf_{Z\in \HH}\RR(Z, \mu_m)\leq \epsilon.\]
We call such a hypothesis $\widehat{Z}_{\underline{x}_m}(\epsilon)$ an $\epsilon$--minimizer for the ERM problem. The difference in the algorithms following the ERM principle is then in the different strategy they use to solve the optimization problem \eqref{ERM}. 

\subsection{More on the sample complexity}\label{sec:sample}

As noted earlier, \cref{thm:Nvariationintro} provides a quantitative upper bound on the sample complexity for empirical risk minimization (ERM) over the class $\HH(n,k,d)$. This should be contrasted with bounds derived from classical covering number arguments, such as those in \cite[Theorem 4]{Narayanan} and \cite[Theorem 1]{Fefferman}, which apply to general hypothesis classes $\HH$ admitting uniform $\epsilon$-coverings.

More precisely, \cite[Theorem 4]{Narayanan} proves the following general result:

\begin{theorem}[Narayanan]\label{thm:N1}
Let $\HH$ be any family of subsets of the unit disk $D^n$ such that there exists a function $N_{\mathcal{H}}:\R^+\to \mathbb{N}$ with the property that for every $Z\in \HH$, the number of $\epsilon$--balls needed to cover $Z$ is bounded by $N_\mathcal{H}(\epsilon)$. Then for every $\epsilon>0$, $0<\delta<1$, if
\[
m\geq C\left(\frac{N_{\mathcal{H}}(16\epsilon)}{\epsilon^2}\min\left\{N_{\mathcal{H}}(16\epsilon), \frac{1}{\epsilon^2}\log^4\left(\frac{N_\mathcal{H}(16\epsilon)}{\epsilon}\right)\right\}+\frac{1}{\epsilon^2}\log\frac{1}{\delta}\right)=:m(\epsilon, \delta, N_{\mathcal{H}}),
\]
then
\[
\mathbb{P}\left\{\sup_{Z\in \HH}\left|\RR(Z, \mu)-\widehat{\RR}_{m}(\underline{x}_m, Z)\right|\leq \epsilon\right\}\geq 1-\delta.
\]
Here $C>0$ is a universal constant.
\end{theorem}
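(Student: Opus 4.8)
This is a uniform law of large numbers for the loss class $\mathcal{F}_\HH:=\{\,x\mapsto \mathrm{dist}(x,Z)^2 : Z\in\HH\,\}$; since the elements of $\HH$ are (nonempty) subsets of $D^n$ and $\mathrm{diam}(D^n)=2$, every member of $\mathcal{F}_\HH$ takes values in $[0,4]$. Following the argument of \cite[Theorem 4]{Narayanan}, the plan is to split the quantity $\Phi(\underline{x}_m):=\sup_{Z\in\HH}\bigl|\RR(Z,\mu)-\widehat{\RR}_m(\underline{x}_m,Z)\bigr|$ into a concentration step, a symmetrization step, and a geometric entropy estimate, the last being where the hypothesis on $N_\HH$ is used.

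For the concentration step, view $\Phi$ as a function of the $m$ independent samples: replacing one sample $x^i$ changes each $\mathrm{dist}(x^i,Z)^2$ by at most $4$, hence changes $\Phi$ by at most $4/m$, so McDiarmid's bounded--difference inequality gives $\mathbb{P}\{\Phi>\mathbb{E}\Phi+t\}\le\exp(-mt^2/8)$. Choosing $t=\sqrt{8\log(1/\delta)/m}\le\epsilon/2$ — which holds as soon as $m$ exceeds a constant times $\tfrac{1}{\epsilon^2}\log\tfrac1\delta$ — reduces matters to showing $\mathbb{E}\Phi\le\epsilon/2$. By the standard symmetrization inequality, $\mathbb{E}\Phi$ is at most twice the Rademacher complexity $\mathbb{E}_{\underline{x}_m,\sigma}\sup_{Z\in\HH}\bigl|\tfrac1m\sum_{i=1}^m\sigma_i\,\mathrm{dist}(x^i,Z)^2\bigr|$, so everything reduces to estimating this quantity.

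The crux — and the step I expect to be the real obstacle — is that the hypothesis controls only the \emph{internal} covering number $N_\HH(\epsilon)$ of each individual $Z$, with no direct control on the metric entropy of $\HH$ itself in the Hausdorff distance; one has to manufacture a bound on the complexity of $\mathcal{F}_\HH$ on the sample out of this purely local data. The mechanism: given $Z$, choose an internal $\epsilon$--net $c_1,\ldots,c_N\in Z$ with $N\le N_\HH(c\epsilon)$ and round each $c_j$ to a fixed $\epsilon$--grid of $D^n$ of cardinality $O(\epsilon^{-n})$; then $\bigl|\mathrm{dist}(x,Z)-\min_{j\le N}|x-c_j|\bigr|\le O(\epsilon)$ for every $x$, so on any finite sample the function $\mathrm{dist}(\cdot,Z)^2$ lies within $O(\epsilon)$ in sup norm of one of at most $O(\epsilon^{-n})^{N}$ ``grid--min'' functions. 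This yields $\log\mathcal{N}\bigl(\epsilon,\mathcal{F}_\HH,L^2(\mu_m)\bigr)\lesssim n\,N_\HH(c\epsilon)\log(1/\epsilon)$, equivalently a bound of the same order on the scale--$\gamma$ fat--shattering dimension of $\mathcal{F}_\HH$. Feeding this estimate into a chaining bound for the Rademacher complexity — optimizing the truncation level in Dudley's entropy integral, and comparing the outcome with the coarser bound obtained from a single--scale union bound over an $\epsilon$--net of $\mathcal{F}_\HH$ — produces, after the usual optimizations, the two competing expressions $\tfrac{N_\HH(16\epsilon)^2}{\epsilon^2}$ and $\tfrac{N_\HH(16\epsilon)}{\epsilon^4}\log^4\!\bigl(N_\HH(16\epsilon)/\epsilon\bigr)$ whose minimum appears in the bound (the universal constants, including the factor $16$, being artifacts of the symmetrization and chaining steps).

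Finally, combining the three estimates, one gets $\Phi\le\epsilon/2+\epsilon/2=\epsilon$ with probability at least $1-\delta$ as soon as $m\ge m(\epsilon,\delta,N_\HH)$, since this threshold dominates both of the thresholds produced above. I would not dwell on the numerical constants in McDiarmid, symmetrization and the chaining lemma, nor on the bookkeeping needed to pass between sup--norm and $L^2(\mu_m)$--covers inside the entropy integral; the only genuinely geometric input is the grid approximation $\mathrm{dist}(x,Z)\approx\min_j|x-c_j|$, which is what converts the internal covering number of $Z$ into statistical complexity of the class.
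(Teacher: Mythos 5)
This statement is not proved in the paper at all: it is quoted verbatim as \cite[Theorem 4]{Narayanan} and used as a black box for the comparison in the section on sample complexity, so there is no internal proof against which to measure your sketch. That said, your skeleton --- McDiarmid's inequality to reduce to the expectation of the supremum, symmetrization to a Rademacher complexity, and an entropy estimate for the loss class $\{x\mapsto\mathrm{dist}(x,Z)^2\}$ manufactured from the \emph{internal} $\epsilon$--nets of the individual sets $Z$ --- is indeed the architecture of Narayanan's argument, and you have correctly isolated the one genuinely geometric step: replacing $\mathrm{dist}(\cdot,Z)$ by $\min_{j\le N}\|\cdot-c_j\|$ over a net of $Z$, which converts the hypothesis on $N_{\HH}$ into a statement about the loss class of $N$--means clustering.

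The way you execute that step, however, does not deliver the bound as stated. Rounding the net points to an $\epsilon$--grid of $D^n$ of cardinality $O(\epsilon^{-n})$ produces a cover of the loss class of log--cardinality $\lesssim n\,N_{\HH}(c\epsilon)\log(1/\epsilon)$, so every estimate derived from it carries the ambient dimension $n$; the theorem asserts a threshold $m(\epsilon,\delta,N_{\HH})$ with a \emph{universal} constant $C$, depending on $\HH$ only through $N_{\HH}$. Moreover, a threshold of order $n\,N_{\HH}\log(1/\epsilon)/\epsilon^2$ is not dominated by $\tfrac{N_{\HH}}{\epsilon^2}\min\{N_{\HH},\tfrac{1}{\epsilon^2}\log^4(N_{\HH}/\epsilon)\}$ in all regimes (take $n\log(1/\epsilon)\gg N_{\HH}$), so the implication ``$m\ge m(\epsilon,\delta,N_{\HH})$ suffices'' is not actually established by your route. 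The dimension--free argument in \cite{Narayanan} instead bounds the statistical complexity of the class $\{x\mapsto\min_{j\le N}\|x-c_j\|^2\}$ directly, exploiting that $\|x-c\|^2-\|x\|^2$ is affine in $x$: roughly, a single--scale bound over the $N$ affine pieces yields the $N_{\HH}^2/\epsilon^2$ term, while a fat--shattering/chaining estimate for the same class yields the $(N_{\HH}/\epsilon^4)\log^4(N_{\HH}/\epsilon)$ term, and the minimum of the two is what appears in the statement. If you substitute this $k$--means complexity bound for your grid--rounding, the remainder of your sketch goes through.
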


This result is applied in \cite{Narayanan} to the class of manifolds with bounded geometry:
\[ 
\HH_{\mathrm{sm}}(k,V, \rho):=\left\{Z\subset D^n\,\middle|\, \dim(Z)=k,\ \mathrm{vol}(Z)\leq V,\ \mathrm{reach}(Z)\geq \rho\right\}.
\] 
For this class, the covering number can be bounded as (see \cite[Section 3.2]{Narayanan}):
\[ 
N_{\HH_{\mathrm{sm}}}(\epsilon)=V\left(C\frac{k}{\min\{\epsilon, \rho\}}\right)^k.
\]

By contrast, the class $\HH(n,k,d)$ of real algebraic varieties does not, in general, admit uniform bounds on geometric quantities such as  reach or curvature. Nonetheless, recent results from metric algebraic geometry still allow us to obtain effective covering estimates. In particular, \cite[Theorem 2.6]{KileelZhang} establishes that the number of $\epsilon$--balls needed to cover any $Z \in \HH(n,k,d)$ satisfies
\[
N(\epsilon, Z)\leq \left(\frac{4kn^{\frac{3}{2}}}{\epsilon}\right)^k(4d)^{n-k},
\]
which gives an upper bound for $N_{\HH(n,k,d)}(\epsilon)$. Inserting this into \cref{thm:N1} yields the sample complexity bound
\[
m\leq C\left(\frac{(kn^{\frac{3}{2}})^kd^{n-k}}{4^{2k-n}\epsilon^{k+2}}\min\left\{\frac{(kn^{\frac{3}{2}})^kd^{n-k}}{4^{2k-n}\epsilon^{k}}, \frac{1}{\epsilon^2}\log^4\left(\frac{(kn^{\frac{3}{2}})^kd^{n-k}}{4^{2k-n}\epsilon^{k+1}}\right)\right\}+\frac{1}{\epsilon^2}\log\frac{1}{\delta}\right).
\]

As discussed in \cref{sec:introexisting}, this bound makes explicit the dependence on $(n,k,d)$, but the dependence on $\epsilon$ is $O(\epsilon^{-k-4})$, which is significantly worse than the $O(\epsilon^{-2}\log(\epsilon^{-1}))$ scaling obtained in \eqref{eq:boundnkdintro}. On the other hand, our bound does not make the dependence on $(n,k,d)$ fully explicit, although this could be refined using further geometric estimates.

\medskip

It is also instructive to compare these two approaches from a geometric standpoint. One may attempt to connect the class $\HH(n,k,d)$ with the bounded--geometry manifold class $\HH_{\mathrm{sm}}(k,V,\rho)$ by using structural properties of smooth complete intersections. Propositions~\ref{propo:boundvolume} and~\ref{propo:reach} show that a smooth complete intersection $Z=Z(p_1,\dots,p_{n-k})$ with $p \in P(d)$ and $\mathrm{dist}(p,\Sigma)\geq \tau \|p\|$ satisfies explicit bounds on volume and reach, namely:
\[
\mathrm{vol}(Z \cap D^n) \leq \mathrm{vol}(D^k) \cdot d^{n-k}, \quad \text{and} \quad \mathrm{reach}(Z \cap D^n) \geq a_3(n,k,d)\tau.
\]
This suggests that, for smooth varieties bounded away from the discriminant, one may construct a formal dictionary identifying them with elements of $\HH_{\mathrm{sm}}(k,V,\rho)$ for suitable values of $V$ and $\rho$.

However, this dictionary--based comparison has two significant drawbacks: first, it only applies to smooth complete intersections that are quantitatively far from the discriminant, excluding possibly the most interesting part of $\HH(n,k,d)$; second, even when applicable, the resulting sample complexity bounds—derived by inserting the associated $N_{\HH_{\mathrm{sm}}}(\epsilon)$ into \cref{thm:N1}—are still worse than those obtained via our direct probabilistic analysis in \cref{thm:Nvariationintro}. In particular, they also scale as $O(\epsilon^{-k-4})$ as $\epsilon \to 0$.
\medskip

This comparison highlights both the flexibility and sharpness of our approach: by exploiting algebraic structure directly—rather than relying on uniform geometric regularity—we obtain stronger statistical guarantees and broader applicability within the realm of real algebraic varieties.

\section{Geometry of the space of polynomial equations}
\subsection{Reminders on real and semialgebraic geometry}\label{sec:semialgebraic}

We collect here some basic definitions and conventions from real and semialgebraic geometry, which will be used throughout the paper. Standard references for this material include \cite{BCR, BasuPollackRoy}.

A \emph{real algebraic set} is a subset \( Z \subset \R^n \) defined as the common zero locus of finitely many real polynomials:
\[
Z = \{ x \in \R^n \mid p_1(x) = \cdots = p_s(x) = 0 \}, \quad p_i \in \R[x_1, \dots, x_n].
\]
These sets are closed in the Euclidean topology and stable under finite unions and intersections. A real algebraic set that is also irreducible (i.e., cannot be written as the union of two proper algebraic subsets) is called an \emph{algebraic variety}. In this paper, we use the term ``variety'' informally to refer to general real algebraic sets, without requiring irreducibility.

We say that a real algebraic set \( Z \) has \emph{degree at most} \( d \) if it can be described as the zero set of polynomials \( p_1, \ldots, p_s \) all of degree at most \( d \). This notion provides an upper bound on the algebraic complexity of the set, although it does not uniquely determine the degree in a geometric or algebro--geometric sense. In particular, different defining systems may yield the same set with different degree bounds.

More generally, a \emph{semialgebraic set} is any subset \( S \subset \R^n \) that can be defined by a finite sequence of polynomial equalities and inequalities. Formally, \( S \) is semialgebraic if it is a finite union of sets of the form
\[
\{ x \in \R^n \mid f_1(x) = 0, \dots, f_k(x) = 0,\ g_1(x) > 0, \dots, g_\ell(x) > 0 \},
\]
for some polynomials \( f_i, g_j \in \R[x_1, \dots, x_n] \). Semialgebraic sets form a Boolean algebra: they are closed under finite unions, intersections, and complements, and are also stable under projections. Their structure admits a well--defined dimension theory and stratification into smooth manifolds.

For instance, if \( Z \subset \R^n \) is an algebraic set, then \( Z \cap D^n \) is typically no longer algebraic but remains semialgebraic. Its dimension can be characterized informally as the smallest integer \( k \) such that a generic affine subspace of codimension \( k+1 \) does not intersect \( Z \cap D^n \). More formally, semialgebraic sets admit finite \emph{semialgebraic stratifications}—that is, decompositions into finitely many disjoint, connected, smooth semialgebraic manifolds (called strata), each of a well-defined dimension. The dimension of a semialgebraic set is then defined as the maximum dimension among its strata. This notion coincides with the algebraic dimension when the set is purely algebraic and is stable under semialgebraic operations such as projection and Boolean combinations. 

A \emph{real algebraic set} \( Z \subset \R^n \) is called a \emph{complete intersection} of dimension \( k \) if it can be written as the common zero set of exactly \( c = n - k \) polynomials \( p_1, \dots, p_c \in \R[x_1, \dots, x_n] \), and if the gradients \( \nabla p_1(x), \dots, \nabla p_c(x) \) are linearly independent at every point \( x \in Z \). In this case, \( Z \) is said to be a \emph{smooth complete intersection}, and is a smooth manifold of dimension \( k \) near each point.

It is often convenient—particularly for algebraic and algorithmic purposes—to encode such a set using a single nonnegative polynomial:
\[
p = p_1^2 + \cdots + p_c^2,
\]
so that \( Z = \{x \in \R^n \mid p(x) = 0\} \). This representation is always possible over \( \R \), and it defines the same underlying set. However, it may increase the degree (up to twice the maximum degree of the \( p_i \)) and does not preserve structural properties such as smoothness or transversality. This trade-off is often acceptable when the goal is to describe or search over algebraic sets semialgebraically, as in the algorithms we discuss later in the paper.

\subsection{Geometry in the space of polynomials}
Denote by $\Pol$ the set of $c$--tuples $p=(p_1, \ldots, p_c)$ of real polynomials in $n$ variables and degree $d$. This is a real vector space of dimension $c\binom{n+d}{d},$ i.e. the space of the coefficients of the polynomials. Each element $p\in \Pol$ defines a real algebraic set in $\R^n$ denoted, as above, by 
\[Z(p):=\bigg\{x\in \R^n\, \bigg|\, p_1(x)=\cdots=p_c(x)=0\bigg\}.\]
We will only be interested in behavior of $Z(p)$ inside the unit disk in $\R^n$. To this end, first we restrict to a closed semialgebraic subset of $\Pol$, namely the set:
\[\label{eq:defpd} P(d):=\bigg\{p\in \Pol\,\bigg|\, Z(p)\cap D^n\neq \emptyset\bigg\}.\]
(This set is semialgebraic, since so is the condition ``there exists $x\in D^n$ such that $p(x)=0$''.)
We omit the dependence on $n$ and $k=n-c$ in the notation. We also consider the set of $c$--tuples giving nonregular equations, called the \emph{discriminant}.
\begin{definition}[The discriminant]\label{def:discriminant}We denote by $\Sigma(d)\subset P(d)$ the set consisting of $c$--tuples $(p_1, \ldots, p_d)$ such that one of the following conditions is verified:
\begin{enumerate}
\item there exists $x\in D^n$ such that $p(x)=0$ and $\mathrm{rk}(D_xp)\leq c-1$.
\item there exists $x\in \partial D^n$ such that $p(x)=0$ and $\mathrm{rk}(D_xp|_{x^\perp})\leq c-1.$
\end{enumerate}
The two conditions above are semialgebraic and express the possibility that the equation $p=0$ \emph{is not} regular on the disk $D^n$, seen as a manifold with boundary.
We denote by 
\[ T(d):=P(d)\setminus \Sigma(d)\]
the set of regular equations. 
\end{definition}
Every element $p\in T(d)$ has the property that the equation $p=0$ is regular both on $D^n$ and  on $\partial D^n$. In particular, the zero set $Z(p)\cap D^n$ is a smooth (nonempty because $p\in P(d)$) manifold of dimension $k=n-c$ with (possibly empty) smooth boundary $Z(p)\cap \partial D^n$.

\subsection{Distance to the discriminant}
We can quantify the amount of regularity of a zero set as follows. We first endow the space of polynomials with a Euclidean structure, the Bombieri--Weyl for this paper (\cref{def:BW}),  and we consider the induced distance. The closer the defining polynomials are to  the discriminant, the less regular their zero set is.

\begin{definition}\label{def:BW}Write the components of a polynomial $c$--tuple $p=(p_1, \ldots, p_c)$ as
\[ p_j(x)=\sum_{|\alpha|\leq d}p_{j, \alpha}\left(\frac{d!}{(d-|\alpha|)!\alpha_1!\cdots \alpha_n!}\right)^{1/2}x_1^{\alpha_1}\cdots x_n^{\alpha_n}\]
(i.e. the $p_{j, \alpha}\in \R$ are the coefficient of $p_j$ in a rescaled monomial basis).
We define the Bombieri--Weyl scalar product on $P(d)$ as:
\[\langle p, q\rangle_{\mathrm{BW}}:=\sum_{j=1}^c\sum_{|\alpha|\leq d}p_{j, \alpha}q_{j, \alpha}.\]
We denote the corresponding norm simply by $\|\cdot\|_{\mathrm{BW}}$. For a closed set $\Sigma\subseteq P(d)$ we denote by $\mathrm{dist}(\Sigma, \cdot)$ the distance from this set
\[\mathrm{dist}(\Sigma, p):=\inf_{q\in \Sigma}\|q-p\|_{\mathrm{BW}}.\]
\end{definition}
The goal of this section is to prove the following result, which is inspired by \cite[Theorem 5.1]{Raffalli}, where the author computes the distance in the Bombieri--Weyl norm from a homogeneous polynomial to the set of polynomials with a singular zero on the sphere.

\begin{proposition}\label{propo:distdiscgen}For every $n, k,d$ there exists $a(n,k,d)>0$ such that:
\begin{align}\label{discineqstatement}
\mathrm{dist}(p, \Sigma)\leq a(n,k,d)\cdot \min\bigg\{&\min_{\|z\|\leq 1}\left(\|p(z)\|^2+\sigma_{\min}(Jp(z))^2\right)^{1/2}, \\
&\min_{\|z\| =1}\left(\|p(z)\|^2+\sigma_{\min}(Jp(z)|_{z^\perp})^2\right)^{1/2}\bigg\},
\end{align}
where $\sigma_{\min}(Jp(z))$ denotes the smallest singular value of the matrix $Jp(z)$.
\end{proposition}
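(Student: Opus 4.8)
The plan is to prove the inequality by an explicit two--step perturbation. Fix $p$ and let $z^\ast\in D^n$ realize the interior minimum $\min_{\|z\|\le1}(\|p(z)\|^2+\sigma_{\min}(Jp(z))^2)$ (the boundary minimum over $\|z\|=1$ is handled in parallel, the tangential Jacobian replacing the full one; I flag the differences as they arise). I will construct a tuple $q^\ast\in\Sigma(d)$ with $\|p-q^\ast\|_{\mathrm{BW}}\le C(d)\,(\|p(z^\ast)\|^2+\sigma_{\min}(Jp(z^\ast))^2)^{1/2}$; running the same construction at the boundary minimizer gives the analogous bound with a constant $C'(d)\le C(d)$, and since $\min\{Ax,By\}\le\max\{A,B\}\min\{x,y\}$ for nonnegative numbers, combining the two upper bounds for $\mathrm{dist}(p,\Sigma)$ yields the statement with $a(n,k,d)=C(d)$.

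\emph{Step 1: killing the value.} The computational input is that the Bombieri--Weyl inner product on scalar polynomials of degree $\le d$ (\cref{def:BW}) is reproducing: expanding $(1+\langle y,z\rangle)^d$ by the multinomial theorem against the rescaled monomial basis shows that $K(y,z):=(1+\langle y,z\rangle)^d$ satisfies $f(z)=\langle f,K(\cdot,z)\rangle_{\mathrm{BW}}$ for every $f$, with $\|K(\cdot,z)\|_{\mathrm{BW}}^2=(1+\|z\|^2)^d$. Set, componentwise,
\[
q_j:=p_j-\frac{p_j(z^\ast)}{(1+\|z^\ast\|^2)^d}\,K(\cdot,z^\ast),\qquad j=1,\dots,c ,
\]
so $q(z^\ast)=0$ and $\|p-q\|_{\mathrm{BW}}=\|p(z^\ast)\|\,(1+\|z^\ast\|^2)^{-d/2}\le\|p(z^\ast)\|$. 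Using $\nabla_x K(x,z^\ast)|_{x=z^\ast}=d(1+\|z^\ast\|^2)^{d-1}z^\ast$, this perturbation changes the Jacobian at $z^\ast$ by the rank-one matrix $-\tfrac{d}{1+\|z^\ast\|^2}\,p(z^\ast)(z^\ast)^{T}$, of operator norm $\le d\|p(z^\ast)\|$. In the boundary case this correction is radial, hence vanishes on $(z^\ast)^{\perp}$, so $Jq(z^\ast)|_{(z^\ast)^\perp}=Jp(z^\ast)|_{(z^\ast)^\perp}$ exactly.

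\emph{Step 2: dropping the rank.} Let $A:=Jq(z^\ast)$ (or $A:=Jq(z^\ast)|_{(z^\ast)^\perp}$ in the boundary case), take an SVD $A=U\,\mathrm{diag}(\sigma_1,\dots,\sigma_c)\,V^{T}$ with $\sigma_c=\sigma_{\min}(A)$ and associated singular vectors $u\in\R^c$, $v$ (with $v\perp z^\ast$ in the boundary case), and subtract from each component the degree-one polynomial $\sigma_c u_i\langle x-z^\ast,v\rangle$, obtaining $q^\ast$. Then $q^\ast(z^\ast)=0$, while the (tangential) Jacobian of $q^\ast$ at $z^\ast$ equals $A-\sigma_c uv^{T}$, of rank $\le c-1$; hence $q^\ast\in\Sigma(d)$ by condition (1) of \cref{def:discriminant} if $\|z^\ast\|<1$, and by condition (2) if $\|z^\ast\|=1$. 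The cost is $\le\sigma_c\|\langle x-z^\ast,v\rangle\|_{\mathrm{BW}}$, and the Bombieri--Weyl norm of a linear polynomial is $\|\langle x-z^\ast,v\rangle\|_{\mathrm{BW}}^2=\tfrac1d\|v\|^2+\langle z^\ast,v\rangle^2\le2$, equal to $\tfrac1d$ when $v\perp z^\ast$. By Weyl's inequality for singular values, $\sigma_c\le\sigma_{\min}(Jp(z^\ast))+d\|p(z^\ast)\|$ in the interior case, whereas $\sigma_c=\sigma_{\min}(Jp(z^\ast)|_{(z^\ast)^\perp})$ is unchanged in the boundary case. Adding the two steps (triangle inequality together with $a_1x+a_2y\le(a_1+a_2)\sqrt{x^2+y^2}$) gives $\mathrm{dist}(p,\Sigma)\le\|p-q^\ast\|_{\mathrm{BW}}\le(1+\sqrt2(1+d))(\|p(z^\ast)\|^2+\sigma_{\min}(Jp(z^\ast))^2)^{1/2}$, and the boundary bound with constant $\sqrt2$; thus $a(n,k,d)=1+\sqrt2(1+d)$ works.

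The genuinely delicate point is the boundary case: the restriction to $(z^\ast)^{\perp}$ must be carried through both steps, and it is precisely because the step-one correction is radial that the construction stays in the right stratum of $\Sigma(d)$ while leaving the tangential singular value untouched. (The minima on the right-hand side are attained by continuity of $z\mapsto\|p(z)\|^2+\sigma_{\min}(Jp(z))^2$ on the compact sets $D^n$ and $\partial D^n$.) One obtains only an inequality here, rather than the exact formula of \cite[Theorem 5.1]{Raffalli}, because in the affine setting --- unlike for homogeneous polynomials on the sphere --- the evaluation functional $f\mapsto f(z)$ and the gradient functionals are not mutually Bombieri--Weyl orthogonal, so the two perturbations are not simultaneously optimal; this costs only a $d$-dependent constant. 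The remaining ingredients (the reproducing-kernel identity and the norm of a linear polynomial) are routine multinomial computations.
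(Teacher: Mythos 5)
Your proof is correct, but it takes a genuinely different route from the paper, at least for the interior discriminant $\Sigma_D$. The paper homogenizes, pulls everything back to the sphere $S^n$ via the stereographic projection, invokes the exact Raffalli-type formula for $\mathrm{dist}(p,\widehat\Sigma_u)$ (which rests on the $O(n+1)$--invariance of the Bombieri--Weyl product and an explicit computation at $e_0$, asserted to generalize from $c=1$ to $c>1$), and then transfers the estimate back to the disk at the cost of bounding the distortion of $D\pi$. You instead stay entirely in the affine chart and exploit the reproducing--kernel identity $f(z)=\langle f,(1+\langle\cdot,z\rangle)^d\rangle_{\mathrm{BW}}$ — the dehomogenized shadow of that same invariance — to perform the two perturbations (kill the value, then drop the rank of the Jacobian by a rank-one linear correction) explicitly. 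What your approach buys is self-containedness (no appeal to \cite{Raffalli} or to its unproved $c>1$ extension) and a fully explicit constant $a=1+\sqrt2(1+d)$ independent of $n$ and $k$; what the paper's approach would buy, namely sharpness of the spherical formula, is lost anyway in the passage back through $\pi$. For the boundary discriminant the two arguments are close in spirit (value plus best rank-$(c-1)$ approximation of the tangential differential), but yours is cleaner: the observation that the kernel correction is radial, hence leaves $Jp(z^\ast)|_{(z^\ast)^\perp}$ untouched, removes the need for the paper's chain of $C^1$/operator-norm comparisons. The only loose end, which you share with the paper's own statement, is the degenerate case $c=n$ on the boundary, where the restricted Jacobian is $c\times(n-1)$ and condition (2) of \cref{def:discriminant} holds automatically at any boundary zero, so the bound is trivial there; it costs nothing to note this separately.
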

In order to prove the result it will be convenient to introduce some additional notation.
First we observe that $\Sigma$ is actually the union of two smaller discriminants:
\[\label{eq:sigmadeco}\Sigma=\Sigma_{D}\cup \Sigma_{\partial D},\]
where 
\[\label{eq:SigmaD}\Sigma_{D}:=\left\{p\in P(d)\,\bigg|\, \exists x\in D^n,\, p(x)=0,\, \mathrm{rk}(D_xp)\leq c-1\right\}\]
and 
\[\label{eq:SigmapartialD}\Sigma_{\partial D}:=\left\{p\in P(d)\,\bigg|\, \exists x\in \partial D^n,\, p(x)=0,\, \mathrm{rk}(D_xp|_{x^\perp})\leq c-1\right\}.\]
Moreover, these two discriminants can be further described as
\[\Sigma_{D}=\bigcup_{x\in D^n}\Sigma_{D, x}\quad \textrm{and}\quad \Sigma_{\partial D}=\bigcup_{x\in \partial D}\Sigma_{\partial D, x},\]
where 
\[\Sigma_{D, x}:=\left\{p\in P(d)\,\bigg|\, p(x)=0,\, \mathrm{rk}(D_xp)\leq c-1\right\}\]
and 
\[\Sigma_{\partial D, x}:=\left\{p\in P(d)\,\bigg|\, p(x)=0,\, \mathrm{rk}(D_xp|_{x^\perp})\leq c-1\right\}.\]

Before giving the proof of \cref{propo:distdiscgen}, we first prove \cref{propo:distdisc} and \cref{propo:distdisc2},  which give bounds on the distance from the two sets $\Sigma_{D}$ and $\Sigma_{\partial D}$. We do this separately in the next two sections, starting from the first.

\subsubsection{Distance to $\Sigma_D$}
In order to study the distance to $\Sigma_D$, denote by $\widehat{p}:S^n\to \R^c$ the restriction of the homogenization of $p\in P(d)$ to the unit sphere $S^n\subset \R^{n+1}$, i.e.
\[\widehat{p}:=\left(\sum_{|\alpha|\leq d}p_{ \alpha}\left(\frac{d!}{(d-|\alpha|)!\alpha_1!\cdots \alpha_n!}\right)^{1/2}x_0^{d-|\alpha|}x_1^{\alpha_1}\cdots x_n^{\alpha_n}\right)\Bigg|_{\left\{x_0^2+\cdots +x_n^2=1\right\}}.\]
In this way elements of $P(d)$ become functions on $S^n$ and one can consider  the corresponding discriminant
\[\label{eq:discriminant2}\widehat{\Sigma}=\widehat{\Sigma}(d):=\left\{p\in P(d)\,\bigg|\, \exists u\in S^n: \,  \widehat{p}(u)=0,\,\mathrm{rk}(D_u\widehat{p})\leq c-1\right\}.\]
Elements from $\widehat{\Sigma}$ are polynomial maps $\widehat{p}:S^n\to \R^c$ such that the equation $\widehat{p}=0$ is not regular on the sphere.
Let now $u\in S^n$; we denote by $\widehat{\Sigma}_u\subset \widehat{\Sigma}$ the set
\[\widehat{\Sigma}_u:=\left\{p\in P(d)\,\bigg|\, \widehat{p}(u)=0,\,\mathrm{rk}(D_u\widehat{p})\leq c-1\right\}.\]
In this way we get a description of $\widehat{\Sigma}$ as the union 
\[\label{eq:union}\widehat{\Sigma}=\bigcup_{u\in S^n}\widehat{\Sigma}_u.\]
The two discriminants $\Sigma_D$ defined by \eqref{eq:SigmaD} and $\widehat{\Sigma}$ defined by \eqref{eq:discriminant2} are related as follows.
We denote by $\pi:\R^n\to S^n$ the (inverse of the) stereographic projection,
\[\pi(x):=\frac{1}{\sqrt{1+\|x\|^2}}(1, x).\]
\begin{lemma}\label{lemma:sigmatilde}For every $x\in \R^n$, we have $\Sigma_{x}=\widehat{\Sigma}_{\pi(x)}.$
In particular,
\[\label{eq:sigmatilde}\Sigma_D=\bigcup_{\|x\|\leq 1}\widehat{\Sigma}_{\pi(x)}.\]
\end{lemma}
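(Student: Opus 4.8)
\textbf{Proof plan for \cref{lemma:sigmatilde}.}

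The plan is to unwind both definitions and show that the two conditions defining $\Sigma_x$ and $\widehat{\Sigma}_{\pi(x)}$ are literally the same condition on $p\in P(d)$, once one accounts for how the stereographic projection relates affine evaluation of $p$ to evaluation of the homogenization $\widehat{p}$ on the sphere. First I would record the key algebraic identity: if $u=\pi(x)=\frac{1}{\sqrt{1+\|x\|^2}}(1,x)$ and $\widehat{p}_j$ is the (Bombieri--Weyl--rescaled) degree-$d$ homogenization of $p_j$ in the variables $x_0,\dots,x_n$, then
\[
\widehat{p}_j(u)=\frac{1}{(1+\|x\|^2)^{d/2}}\,p_j(x),
\]
since substituting $x_0=1/\sqrt{1+\|x\|^2}$ and $x_i=x_i/\sqrt{1+\|x\|^2}$ into a homogeneous degree-$d$ polynomial pulls out a global factor $(1+\|x\|^2)^{-d/2}$ and returns the dehomogenization evaluated at $x$. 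In particular $\widehat{p}(u)=0$ if and only if $p(x)=0$, which already matches the first clause in each of the two set definitions.

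Next I would handle the rank condition. The point is that $D_u\widehat p$ and $D_x p$ have the same rank when $\widehat p(u)=0$. One clean way: $\pi$ is a diffeomorphism from $\R^n$ onto $S^n\setminus\{(-1,0,\dots,0)\}$ (note $u=\pi(x)$ always has positive $x_0$-coordinate, so it is never the omitted pole), hence $D_x\pi:\R^n\to T_uS^n$ is a linear isomorphism. By the chain rule, $D_x(\widehat p\circ \pi)=D_u\widehat p|_{T_uS^n}\circ D_x\pi$, so $\mathrm{rk}\big(D_u\widehat p|_{T_uS^n}\big)=\mathrm{rk}\big(D_x(\widehat p\circ\pi)\big)$. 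But $\widehat p\circ\pi$ is exactly $x\mapsto (1+\|x\|^2)^{-d/2}p(x)$, a scalar positive function times $p$; at a point $x$ where $p(x)=0$, the product rule kills the derivative-of-the-scalar term, giving $D_x(\widehat p\circ\pi)=(1+\|x\|^2)^{-d/2}D_xp$, which has the same rank as $D_xp$. Since $\widehat{\Sigma}_u$ in \eqref{eq:discriminant2} is defined via the rank of $D_u\widehat p$ as a map on the tangent space $T_uS^n$ (the sphere being the ambient manifold there), this shows $\mathrm{rk}(D_u\widehat p)\le c-1 \iff \mathrm{rk}(D_xp)\le c-1$. Combining with the previous paragraph gives $\Sigma_x=\widehat{\Sigma}_{\pi(x)}$, and then \eqref{eq:sigmatilde} follows by taking the union over $\|x\|\le 1$ using the decomposition $\Sigma_D=\bigcup_{x\in D^n}\Sigma_{D,x}$ from \eqref{eq:SigmaD}.

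I expect the only real subtlety — the ``main obstacle'' such as it is — to be bookkeeping about whether the derivative $D_u\widehat p$ is taken intrinsically on $S^n$ or as the ambient derivative in $\R^{n+1}$ restricted to $T_uS^n$; one must make sure the rank comparison is done on the same footing in both formulations, and in particular that the radial direction contributes nothing (which follows from Euler's identity: $\widehat p$ homogeneous of degree $d$ and $\widehat p(u)=0$ force the ambient gradient to annihilate the radial direction $u$ precisely when $\widehat p(u)=0$, so the ambient and intrinsic ranks agree at zeros of $\widehat p$). Once that point is pinned down, the proof is a short chain-rule computation with no estimates involved.
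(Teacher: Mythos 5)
Your proposal is correct and follows essentially the same route as the paper's proof: the identity $\widehat{p}\circ\pi=(1+\|x\|^2)^{-d/2}\,p$ (the paper writes a different exponent on the scalar factor, but only its positivity matters), differentiation via the chain and product rules at a zero of $p$, and the fact that $D_x\pi$ is a linear isomorphism onto $T_{\pi(x)}S^n$. Your additional remark on reconciling the ambient and intrinsic derivatives on $S^n$ via Euler's identity is a sensible clarification of a point the paper leaves implicit.
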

\begin{proof}First observe as a consequence of the definition of $\widehat{p}$ the following identity:
\[\widehat{p}(\pi(x))=\frac{p(x)}{(1+\|x\|^2)^{2d}}.\]
From this identity we get an identity involving the differentials:
\[D_{\pi(x)}\widehat{p}D_x\pi=D_x\left((1+\|x\|^2)^{-2d}\right)p(x)+\left(1+\|x\|^2\right)^{-2d}D_xp.\]
In particular, since $D_x\pi$ is an isomorphism, the two conditions 
\[p(x)=0 \quad\textrm{and}\quad \mathrm{rk}(D_xp)\leq c-1\] are both satisfied if and only if so are the two conditions
\[\widehat{p}(\pi(x))=0\quad \textrm{and} \quad \mathrm{rk}(D_{\pi(x)}\widehat{p})=\mathrm{rk}\left(\left(1+\|x\|^2\right)^{-2d}D_xp\right)\leq c-1.\]
This means $\Sigma_{x}=\widehat{\Sigma}_{\pi(x)}$. The second part of the statement is an immediate consequence of the first part and the definition of $\Sigma$.
\end{proof}

The following result describes the distance, with respect to the Bombieri--Weyl scalar product, from a point $p\in P(d)$ to $\widehat{\Sigma}(d)$. In the case $c=1$ this is a theorem of Raffalli, \cite[Theorem 5.1]{Raffalli}. As we will see, the proof from \cite{Raffalli} easily generalizes to the case $c>1$. In order to state the result, for a linear map $L:V\to W$ between Euclidean vector spaces of dimension $\dim(V)=n$, $\dim(W)=c$, we denote by $\sigma_{\min}(L)\in [0, \infty)$ the smallest singular value of a matrix $M\in \R^{c\times n}$ representing $L$ with respect to orthonormal bases for $V$ and $W$. In the case of our interest
\[L=D_u\widehat{p}:T_uS^n\to \R^c,\]
where both spaces are endowed with their natural Euclidean structure.

\begin{proposition}\label{propo:invariance}Let $p\in P(d)$ and $u\in S^n$. Then
\[\label{eq:distu}\mathrm{dist}(p, \widehat{\Sigma}_u)=\left(\|\widehat{p}(u)\|^2+\frac{\sigma_{\min}\left(D_u\widehat{p}\right)^2}{d}\right)^{\frac{1}{2}}.\]
In particular,
\[\label{eq:raffalli}\mathrm{dist}(p, \widehat{\Sigma})=\min_{u\in S^n}\left(\|\widehat{p}(u)\|^2+\frac{\sigma_{\min}\left(D_u\widehat{p}\right)^2}{d}\right)^{\frac{1}{2}}.\]
\end{proposition}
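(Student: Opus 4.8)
\textbf{Proof plan for \cref{propo:invariance}.}

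The plan is to follow Raffalli's argument for the case $c=1$ and check that nothing essential changes when $c>1$. Fix $u\in S^n$. First I would reduce to a convenient normal form: the Bombieri--Weyl structure on each component of $\Pol$ is orthogonally invariant (the $O(n+1)$--action on $\R^{n+1}$ by precomposition is an isometry for $\|\cdot\|_{\mathrm{BW}}$, and the diagonal action on $c$--tuples preserves the scalar product of \cref{def:BW}), so I may assume $u=e_0=(1,0,\dots,0)$. Under this normalization, $\widehat p(e_0)$ is just the vector of constant terms of the tuple $\widehat p$ (the coefficients of $x_0^d$ in each component), and $D_{e_0}\widehat p:T_{e_0}S^n\to\R^c$ is computed from the coefficients of the linear-in-$(x_1,\dots,x_n)$ monomials $x_0^{d-1}x_i$, each carrying the weight $(d!/(d-1)!)^{1/2}=d^{1/2}$ from the rescaled basis; this weight is precisely what produces the factor $1/d$ in front of $\sigma_{\min}(D_{e_0}\widehat p)^2$.

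Next I would set up the optimization. An element $q\in\widehat\Sigma_{e_0}$ is characterized by $\widehat q(e_0)=0$ and $\operatorname{rk}(D_{e_0}\widehat q)\le c-1$, i.e.\ by two linear conditions on the coefficient blocks of $q$ that only involve the constant-term block $C(q)\in\R^c$ and the linear-term block $M(q)\in\R^{c\times n}$ (suitably rescaled to be an isometric coordinate on the relevant coordinate subspace). Because the Bombieri--Weyl norm is a direct sum over the graded pieces, minimizing $\|p-q\|_{\mathrm{BW}}^2$ over $q\in\widehat\Sigma_{e_0}$ decouples: all coefficient blocks of $q$ other than the constant and linear blocks should simply match those of $p$, and the problem collapses to
\[
\mathrm{dist}(p,\widehat\Sigma_{e_0})^2=\ \|C(p)\|^2\ +\ \tfrac{1}{d}\,\min\bigl\{\,\|M(p)-M\|^2\ :\ M\in\R^{c\times n},\ \operatorname{rk}(M)\le c-1\,\bigr\},
\]
where the $1/d$ records the rescaling between $M$ and the actual coefficients. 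The constant-term contribution gives $\|C(p)\|^2=\|\widehat p(e_0)\|^2$, since $\widehat q(e_0)=0$ forces the constant block of $q$ to vanish. The inner minimum is the classical Eckart--Young problem: the best rank-$(c-1)$ approximation to $M(p)$ in Frobenius norm has squared error equal to the square of the smallest singular value, i.e.\ $\sigma_{\min}(M(p))^2$. Identifying $M(p)$ (after the $d^{1/2}$ rescaling) with the matrix of $D_{e_0}\widehat p$ with respect to orthonormal bases of $T_{e_0}S^n\cong\R^n$ and $\R^c$ yields \eqref{eq:distu}. Equation \eqref{eq:raffalli} then follows by taking the infimum over $u\in S^n$ in the decomposition \eqref{eq:union}, the minimum being attained by compactness of $S^n$ and continuity of $u\mapsto(\|\widehat p(u)\|^2+\sigma_{\min}(D_u\widehat p)^2/d)^{1/2}$.

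The main obstacle is bookkeeping rather than conceptual: one must verify carefully that the two defining conditions of $\widehat\Sigma_{e_0}$ genuinely involve only the constant and linear coefficient blocks (so the decoupling is legitimate), and that under the orthonormal identification of $T_{e_0}S^n$ with $\R^n$ the rescaled linear-coefficient matrix is exactly $D_{e_0}\widehat p$, so that the $1/d$ factor lands in the right place. One small subtlety is that differentiating $\widehat p$ along the sphere rather than in ambient $\R^{n+1}$ is harmless at $u=e_0$ because $T_{e_0}S^n=\{x_0=0\}$ is spanned by $\partial_{x_1},\dots,\partial_{x_n}$, so the tangential differential picks out precisely the $x_0^{d-1}x_i$ coefficients. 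Once these identifications are pinned down, the statement is an immediate consequence of Eckart--Young applied blockwise.
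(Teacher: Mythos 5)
Your proposal is correct and follows essentially the same route as the paper: reduce to $u=e_0$ by $O(n+1)$--invariance of the Bombieri--Weyl product, then compute the distance to $\widehat\Sigma_{e_0}$ explicitly. The paper simply cites Raffalli for that explicit computation, whereas you carry it out (block decoupling of the BW norm, the $d^{1/2}$ weight on the linear block, and Eckart--Young), and your bookkeeping of where the $1/d$ factor comes from is accurate.
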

\begin{proof}
The orthogonal group $O(n+1)$ acts linearly on the space of (homogeneous) polynomials by change of variables and the Bombieri--Weyl scalar product is invariant under this action. Using this and \eqref{eq:union}, the problem reduces to compute $\mathrm{dist}(\widehat{p}, \Sigma_{e_0})$, where $e_0=(1, 0, \ldots, 0)$, and this can be done explicitly as in \cite{Raffalli}. The second part of the statement is a direct consequence of the first part.
\end{proof}
As a consequence of the previous result we get the following.
\begin{proposition}\label{propo:distdisc}For every $n,k,d\in \N$ there exists $a_1(n,k,d)>0$ such that:
\begin{equation}
\mathrm{dist}(p, \Sigma_D)\leq a_1(n,k,d)\min_{\|z\|\leq 1}\left(\|p(z)\|^2+\sigma_{\min}(D_zp)^2\right)^{1/2}.
\label{discineq}
\end{equation}
\end{proposition}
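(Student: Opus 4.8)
The plan is to transfer the exact formula for $\mathrm{dist}(p,\widehat\Sigma)$ from \cref{propo:invariance} to a bound for $\mathrm{dist}(p,\Sigma_D)$, using the relation $\Sigma_D=\bigcup_{\|x\|\leq 1}\widehat\Sigma_{\pi(x)}$ established in \cref{lemma:sigmatilde}. The key point is that $\Sigma_D$ is, in general, a \emph{proper} subset of $\widehat\Sigma$ (it only uses the directions $u=\pi(x)$ with $\|x\|\leq 1$, i.e. a closed spherical cap), so one cannot just invoke \eqref{eq:raffalli} directly; instead, from $\Sigma_D\subseteq\widehat\Sigma$ we only get $\mathrm{dist}(p,\Sigma_D)\geq\mathrm{dist}(p,\widehat\Sigma)$, which is the wrong direction. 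What we actually need is to bound $\mathrm{dist}(p,\Sigma_D)$ from \emph{above}.

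First I would fix $z\in D^n$ realizing (approximately) the minimum on the right-hand side, and consider $u=\pi(z)\in S^n$. By \cref{propo:invariance} (in its pointwise form \eqref{eq:distu}) we have
\[
\mathrm{dist}(p,\widehat\Sigma_{\pi(z)})=\left(\|\widehat p(\pi(z))\|^2+\tfrac1d\,\sigma_{\min}(D_{\pi(z)}\widehat p)^2\right)^{1/2},
\]
and by \cref{lemma:sigmatilde} this equals $\mathrm{dist}(p,\Sigma_{D,z})\geq\mathrm{dist}(p,\Sigma_D)$. So it suffices to bound the right-hand side of this display by a constant (depending only on $n,k,d$) times $\left(\|p(z)\|^2+\sigma_{\min}(D_zp)^2\right)^{1/2}$, uniformly over $\|z\|\leq 1$. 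Using the identity $\widehat p(\pi(x))=p(x)/(1+\|x\|^2)^{2d}$ from the proof of \cref{lemma:sigmatilde}, and for $\|z\|\le1$ the bound $1\le 1+\|z\|^2\le 2$, the value term is controlled: $\|\widehat p(\pi(z))\|\leq\|p(z)\|$. For the differential term I would differentiate $\widehat p\circ\pi=(1+\|x\|^2)^{-2d}p$ and solve for $D_{\pi(z)}\widehat p$ restricted to the image of $D_z\pi$; since $D_z\pi$ is an isomorphism onto $T_{\pi(z)}S^n$ with operator norm and inverse-norm bounded by explicit constants for $\|z\|\le1$ (stereographic projection is bi-Lipschitz on the unit ball), and the derivative of the scalar factor $(1+\|x\|^2)^{-2d}$ is bounded there, one gets
\[
\sigma_{\min}(D_{\pi(z)}\widehat p)\leq C(n,d)\left(\|p(z)\|+\sigma_{\min}(D_zp)\right)
\]
for a constant depending only on $n$ and $d$; here the term $\|p(z)\|$ re-enters because of the product-rule term $D_x((1+\|x\|^2)^{-2d})\,p(x)$. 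Combining, the whole right-hand side of the \cref{propo:invariance} formula at $u=\pi(z)$ is at most $a_1(n,k,d)\left(\|p(z)\|^2+\sigma_{\min}(D_zp)^2\right)^{1/2}$, and taking the infimum over $z$ (and a standard compactness argument to ensure the minimum on the right is attained) gives the claim.

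The main obstacle I anticipate is the bookkeeping in the differential estimate: one must carefully relate $\sigma_{\min}(D_{\pi(z)}\widehat p\,|_{T_{\pi(z)}S^n})$ to $\sigma_{\min}(D_zp\,|_{\R^n})$ through the change-of-variables formula, tracking how the smallest singular value transforms under pre-composition with the isomorphism $D_z\pi$ (which distorts by at most the condition number of $D_z\pi$, uniformly bounded on $D^n$) and under the additive rank-$\le1$ perturbation coming from the conformal factor, which contributes the extra $\|p(z)\|$ term. None of this is deep, but getting a clean single constant $a_1(n,k,d)$ requires a uniform bound on $\|D_z\pi\|$, $\|(D_z\pi)^{-1}\|$ and $\||D_x((1+\|x\|^2)^{-2d})|\|$ over $\|z\|\le1$, all of which are elementary.
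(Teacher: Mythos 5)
Your proposal is correct and follows essentially the same route as the paper: both reduce to the pointwise formula of \cref{propo:invariance} via the identification $\Sigma_{D,z}=\widehat\Sigma_{\pi(z)}$ from \cref{lemma:sigmatilde}, and then transfer the value and differential terms back to $p(z)$ and $D_zp$ using the identities $\widehat p(\pi(x))=(1+\|x\|^2)^{-2d}p(x)$ and the product rule, with uniform bounds on $D_x\pi$ and the conformal factor over $\|x\|\le 1$. Your explicit remark that $\Sigma_D\subseteq\widehat\Sigma$ gives the inequality in the wrong direction, and that one must instead work with the individual fibers $\widehat\Sigma_{\pi(z)}$, correctly identifies the one point where a naive application of \eqref{eq:raffalli} would fail.
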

\begin{proof}The equations \eqref{eq:sigmatilde} and \eqref{eq:distu} imply that
\[\mathrm{dist}(p, \Sigma_D)=\min_{\|x\|\leq 1}\left(\|\widehat{p}(\pi(x))\|^2+\frac{\sigma_{\min}\left(D_{\pi(x)}\widehat{p}\right)^2}{d}\right)^{\frac{1}{2}}.\]
Recall now the following two identities from the proof of \cref{lemma:sigmatilde}:
\[\widehat{p}(\pi(x))=\frac{p(x)}{(1+\|x\|^2)^{2d}}\]
and
\[D_{\pi(x)}\widehat{p}=D_{x}\pi^{-1}\left(D_x\left((1+\|x\|^2)^{-2d}\right)p(x)+\left(1+\|x\|^2\right)^{-2d}D_xp\right). \]
Since $\|x\|\leq 1$, the singular values of $D_{x}\pi^{-1}$ can be bounded by a function of the norm of $x$ only. Similarly, $(1+\|x\|^2)^{-2d}$ and its derivative can be bounded by $O(d\|x\|)$. This implies the statement. 
\end{proof}
\subsubsection{Distance to $\Sigma_{\partial D}$}The distance to $\Sigma_{\partial D}$ can be estimated via the folllowing.
\begin{proposition}\label{propo:distdisc2}For every $n,k,d\in \N$ there exists $a_2(n,k,d)>0$ such that
\[\mathrm{dist}(p, \Sigma_{\partial D})\leq a_2(n,k,d)\min_{\|z\| =1}\left(\|p(z)\|^2+\sigma_{\min}(D_zp|_{z^\perp})^2\right)^{1/2}.\]
\end{proposition}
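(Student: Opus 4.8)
The statement to prove is \cref{propo:distdisc2}, bounding $\mathrm{dist}(p, \Sigma_{\partial D})$ in terms of the minimum over the sphere $\partial D^n = S^{n-1}$ of $\left(\|p(z)\|^2 + \sigma_{\min}(D_zp|_{z^\perp})^2\right)^{1/2}$. The natural approach is to mirror the argument for $\Sigma_D$ given in \cref{propo:distdisc}, but now working intrinsically on the sphere $S^{n-1}$ rather than on $\R^n$. The key structural fact to exploit is that, just as $\Sigma_D$ decomposed as a union $\bigcup_{x} \Sigma_{D,x}$ and we had the exact formula $\mathrm{dist}(p, \widehat{\Sigma}_u)^2 = \|\widehat{p}(u)\|^2 + \sigma_{\min}(D_u\widehat{p})^2/d$ from \cref{propo:invariance}, here we should establish an analogous exact (or near-exact up to dimensional constants) formula for $\mathrm{dist}(p, \Sigma_{\partial D, z})$ at a fixed boundary point $z \in S^{n-1}$, and then take the minimum over $z$.

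\textbf{Step 1: reduce to a single boundary point.} Write $\Sigma_{\partial D} = \bigcup_{z \in S^{n-1}} \Sigma_{\partial D, z}$, so that $\mathrm{dist}(p, \Sigma_{\partial D}) = \min_{z \in S^{n-1}} \mathrm{dist}(p, \Sigma_{\partial D, z})$. It therefore suffices to bound $\mathrm{dist}(p, \Sigma_{\partial D, z})$ by $a_2 \left(\|p(z)\|^2 + \sigma_{\min}(D_zp|_{z^\perp})^2\right)^{1/2}$ for each fixed $z$, with $a_2$ independent of $z$ (here uniformity is automatic since $S^{n-1}$ is compact, or can be arranged by the $O(n)$-symmetry below).

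\textbf{Step 2: exploit the $O(n)$-action.} The orthogonal group $O(n)$ acts on $\Pol$ by change of variables, the Bombieri--Weyl norm is invariant under this action (as used in the proof of \cref{propo:invariance}), and this action is transitive on $S^{n-1}$ and sends $\Sigma_{\partial D, z}$ to $\Sigma_{\partial D, gz}$. Hence it is enough to treat one distinguished point, say $z = e_1$. The condition defining $\Sigma_{\partial D, e_1}$ is then a linear-plus-rank-drop condition on the coefficients of $p$ restricted to the tangent space $e_1^\perp = \mathrm{span}(e_2, \dots, e_n)$: we need $p(e_1) = 0$ (that is $c$ linear conditions on the coefficients) together with $\sigma_{\min}(D_{e_1}p|_{e_1^\perp}) = 0$.

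\textbf{Step 3: compute the distance to the fixed-point stratum.} At $z = e_1$, evaluation $p \mapsto p(e_1)$ and the restricted Jacobian $p \mapsto D_{e_1}p|_{e_1^\perp}$ are linear functionals of the coefficient vector. Decompose the coefficient space into the subspace ``seen'' by these functionals and its orthogonal complement; on the orthogonal complement $\Sigma_{\partial D, e_1}$ imposes nothing, so the distance only involves the components $(p(e_1), D_{e_1}p|_{e_1^\perp})$. Up to a fixed positive constant depending only on $n,k,d$ (coming from the ratio between the Bombieri--Weyl metric on these coordinates and the Euclidean metric on the pair $(\text{value}, \text{restricted Jacobian})$), we get
\[
\mathrm{dist}(p, \Sigma_{\partial D, e_1}) \leq a_2(n,k,d)\left(\|p(e_1)\|^2 + \sigma_{\min}(D_{e_1}p|_{e_1^\perp})^2\right)^{1/2},
\]
because to reach $\Sigma_{\partial D, e_1}$ it is enough to first kill $p(e_1)$ (moving a distance $\lesssim \|p(e_1)\|$) and then perturb the restricted Jacobian to drop its rank (moving a distance $\lesssim \sigma_{\min}(D_{e_1}p|_{e_1^\perp})$, exactly as in the Eckart--Young/Raffalli computation). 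Transporting by $O(n)$ and minimizing over $z$ yields the proposition.

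\textbf{The main obstacle.} The delicate point is Step 3: one must check that after we set $p(e_1)=0$ by an orthogonal-to-the-evaluation-kernel perturbation, the restricted Jacobian $D_{e_1}p|_{e_1^\perp}$ can still be modified \emph{independently} — i.e., that there exist perturbation directions changing $D_{e_1}p|_{e_1^\perp}$ while keeping $p(e_1)=0$, and with controlled Bombieri--Weyl cost. This requires identifying enough monomials (e.g. terms of the form $x_1^{d-1}x_j$ vanishing at $e_1$ up to first order, or rather whose value at $e_1$ is zero while their $x_j$-derivative at $e_1$ is nonzero) to realize an arbitrary target for the restricted Jacobian; the Bombieri--Weyl weights must then be tracked to extract the constant $a_2(n,k,d)$. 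This is exactly the boundary analogue of Raffalli's explicit computation at $e_0$ in \cref{propo:invariance}, and the only genuinely new ingredient is handling the restriction to $z^\perp$, which amounts to an Eckart--Young argument applied to the $c \times (n-1)$ matrix $D_{e_1}p|_{e_1^\perp}$ instead of to the full $c \times n$ Jacobian.
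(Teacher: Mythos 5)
Your proposal follows essentially the same route as the paper's proof: decompose $\Sigma_{\partial D}$ as $\bigcup_{z}\Sigma_{\partial D, z}$, use the isometric $O(n)$-action on $(\Pol,\|\cdot\|_{\mathrm{BW}})$ to reduce to $z=e_1$, and there bound the distance by separately killing the value $p(e_1)$ and applying Eckart--Young to the restricted Jacobian $D_{e_1}p|_{e_1^\perp}$, absorbing the Bombieri--Weyl weights into a dimensional constant. The "independence" issue you flag in Step~3 is exactly what the paper handles via the Taylor expansion of $\tilde p$ at $e_1$ (the constant and tangential-linear terms can be modified independently of the higher-order terms), so the argument is correct as sketched.
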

\begin{proof}We will use the description
\[\label{eq:union}\Sigma_{\partial D}=\bigcup_{z\in \partial D}\Sigma_{\partial D, z}\]
and first compute the distance of $p$ to each $\Sigma_{\partial D, z}.$ First observe that the orthogonal group $O(n)$, the stabilizer of the point at infinity for the homogenization of polynomials, acts by isometries on the space $\Pol$ with the Bombieri--Weyl norm (because it is a subgroup of $O(n+1)$ that acts by isometries, as in \cref{propo:invariance}). Note that the action on restrictions of polynomials to $\partial D\simeq S^{n-1}$ is then the usual action of $O(n)$ on the space of functions on the sphere $S^{n-1}$ by change of variables. In particular, denoting by $R\in O(n)$ an orthogonal transformation such that $Rz=e_1$, we have
\[\mathrm{dist}(p, \Sigma_{\partial D, z})=\mathrm{dist}(p\circ R^{-1}, \Sigma_{\partial D, e_1}).\]
Now, in the monomial bases, we see that $\Sigma_{\partial D, e_1}$ coincides with the set of polynomials $q:\R^n\to \R^c$ such that \[q(x)=L(x-e_1)+\textrm{h.o.t.},\] where $L=Jq(e_1)\in \R^{c\times n}$ is a matrix with the property that $\mathrm{rk}(L|_{e_1^\perp})\leq c-1.$

Denote now by $\tilde{p}:=p\circ R^{-1}$ and consider the Taylor expansion of $\tilde{p}$ at $e_1$:
\[\tilde{p}(x)=\tilde{p}(e_1)+D_{e_1}\tilde{p}(x-e_1)+\textrm{h.o.t}.\] 
Then, using the Taylor expansions of $q$ and $\tilde{p}$,
\begin{align}\mathrm{dist}(p, \Sigma_{\partial D, z})=&\mathrm{dist}(\tilde{p}, \Sigma_{\partial D, e_1})\\
=&\min_{q\in \Sigma_{\partial, e_1}}\|\tilde{p}-q\|\\
\leq& \min_{q\in \Sigma_{\partial, e_1}}\|\tilde{p}(e_1)+(D_{e_1}\tilde{p}-L)(x-e_1)\|\\
\leq& \|\tilde{p}(e_1)\|+\min_{q\in \Sigma_{\partial, e_1}}\|(D_{e_1}\tilde{p}-L)(x-e_1)\|\\
=&\|\tilde{p}(e_1)\|+\min_{\mathrm{rk}(L|_{e_1^\perp})\leq c-1}\|(D_{e_1}\tilde{p}-L)(x-e_1)\|\\
=&\|\tilde{p}(e_1)\|+\min_{\mathrm{rk}(L|_{e_1^\perp})\leq c-1}\bigg(\|(D_{e_1}\tilde{p}-L)\mathrm{proj}_{e_1^\perp}(x-e_1)\|+\\
&+\|(D_{e_1}\tilde{p}-L)\mathrm{proj}_{e_1}(x-e_1)\|\bigg)\\
=&\|\tilde{p}(e_1)\|+\min_{\mathrm{rk}(Q)\leq c-1}\|(D_{e_1}\tilde{p}|_{e_1^\perp}-Q)\mathrm{proj}_{e_1^\perp}(x-e_1)\|\\
=&\|\tilde{p}(e_1)\|+C(n,k,d)\min_{\mathrm{rk}(Q)\leq c-1}\|(D_{e_1}\tilde{p}|_{e_1^\perp}-Q)\mathrm{proj}_{e_1^\perp}(x-e_1)\|_{C^1(D)}\\
\leq&\|\tilde{p}(e_1)\|+C(n,k,d)\min_{\mathrm{rk}(Q)\leq c-1}\|(D_{e_1}\tilde{p}|_{e_1^\perp}-Q)\|_2\|\mathrm{proj}_{e_1^\perp}(x-e_1)\|_2\\
\leq&\|\tilde{p}(e_1)\|_2+C(n,k,d)\min_{\mathrm{rk}(Q)\leq c-1}\|(D_{e_1}\tilde{p}|_{e_1^\perp}-Q)\|_2\\
\leq&\|\tilde{p}(e_1)\|_2+C(n,k,d)\sigma_{\min}(D_{e_1}\tilde{p}|_{e_1^\perp})\\
\leq& a_2(n,k,d)\left(\|\tilde{p}(e_1)\|^2+\sigma_{\min}(D_{e_1}\tilde{p}|_{e_1^\perp})^2\right)^{\frac{1}{2}}\\
\label{eq:last}=&a_2(n,k,d)\left(\|p(z)\|^2+\sigma_{\min}(D_{z}p|_{z^\perp})^2\right)^{\frac{1}{2}}.
\end{align}
Using now \eqref{eq:union} and \eqref{eq:last}, we get
\[\mathrm{dist}(p, \Sigma_{\partial D})=\min_{\|z\|=1}\mathrm{dist}(p, \Sigma_{\partial D, z})\leq a_2(n,k,d)\min_{\|z\| =1}\left(\|p(z)\|^2+\sigma_{\min}(D_zp|_{z^\perp})^2\right)^{1/2},\]
as in the statement.
\end{proof}

We are now ready to give the proof of \cref{propo:distdiscgen}.
\begin{proof}[Proof of \cref{propo:distdiscgen}]The decomposition \eqref{eq:sigmadeco} implies that
\[\label{eq:deco1}\mathrm{dist}(p, \Sigma)=\min\left\{\min_{x\in D^n}\mathrm{dist}(p, \Sigma_{D, x}), \min_{x\in \partial D^n}\mathrm{dist}(p, \Sigma_{\partial D, x})\right\},\]The result follows immediately from the decomposition \eqref{eq:deco1} and \cref{propo:distdisc} and \cref{propo:distdisc2} below.
\end{proof}
\subsection{Bounds on the geometry of real complete intersections}\label{sec:boundcomplete} In this section we prove two results giving explicit bounds on the geometry of complete intersections. These results are not needed for the rest of the paper and serve just to establish a practical dictionary between algebraic and geometric regularity conditions, as discussed in \cref{sec:sample}.
\begin{proposition}\label{propo:boundvolume}Let $Z=Z(p_1, \ldots, p_{n-k})$ be a smooth complete intersection in $\R^n$ with $p=(p_1, \ldots, p_{n-k})\in P(d)$. Then
\[\mathrm{vol}(Z\cap D^n)\leq \mathrm{vol}(D^k)d^{n-k}.\]
\end{proposition}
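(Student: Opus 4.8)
The plan is to use the classical integral-geometric (Cauchy--Crofton type) bound on the volume of a real algebraic set of bounded degree, combined with the fact that $Z$ is a smooth manifold of dimension $k = n - (n-k)$. The key input is a Crofton formula: for a $k$-dimensional smooth submanifold $M \subset \R^n$, the $k$-dimensional Hausdorff volume of $M$ is, up to an explicit normalizing constant depending only on $n$ and $k$, the average over the affine Grassmannian of $(n-k)$-planes $A$ of the number of intersection points $\#(M \cap A)$. Concretely,
\[
\mathrm{vol}_k(M) = c(n,k) \int_{\mathrm{Graff}(n-k,n)} \#(M \cap A)\, \mathrm{d}A,
\]
where $\mathrm{d}A$ is the suitably normalized invariant measure on the affine Grassmannian. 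Restricting attention to $M = Z \cap D^n$ and to those affine $(n-k)$-planes $A$ that meet $D^n$, the measure of the relevant set of planes is exactly $\mathrm{vol}(D^k)/c(n,k)$ (this is the standard computation recovering $\mathrm{vol}_k(D^k)$ when one applies the same formula to the flat disk $D^k \subset \R^k \subset \R^n$); the normalization constants are chosen precisely so that this bookkeeping works.

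The next step is the degree bound on the fibers: for a generic affine $(n-k)$-plane $A$, the intersection $Z(p_1,\dots,p_{n-k}) \cap A$ is a zero-dimensional real algebraic set cut out inside $A \cong \R^{n-k}$ by $n-k$ polynomials each of degree at most $d$, hence by Bézout's theorem its cardinality is at most $d^{n-k}$ (the real count being bounded by the complex count). Plugging $\#(Z \cap A) \le d^{n-k}$ into the Crofton formula and integrating only over planes meeting $D^n$ gives
\[
\mathrm{vol}_k(Z \cap D^n) \le c(n,k) \cdot d^{n-k} \cdot \int_{\{A\, :\, A \cap D^n \neq \emptyset\}} \mathrm{d}A = d^{n-k} \cdot \mathrm{vol}(D^k),
\]
which is exactly the claimed inequality. (One should be slightly careful that the Crofton formula as stated requires genericity of $A$ and that the set of non-generic $A$ has measure zero; since $Z$ is a smooth complete intersection this is automatic by Sard's theorem applied to the restriction of the projection $Z \to \mathrm{Graff}$, or alternatively one invokes the semialgebraic Sard theorem.)

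The main obstacle is getting the normalization constants exactly right so that the bound comes out as $\mathrm{vol}(D^k)\, d^{n-k}$ rather than with an extra multiplicative constant. The cleanest way to handle this is to avoid computing $c(n,k)$ explicitly: apply the Crofton formula once to the model case $D^k \subset \R^n$ (viewing the flat $k$-disk as a degenerate ``degree $1$'' complete intersection), where the left side is $\mathrm{vol}(D^k)$ and a generic affine $(n-k)$-plane meets it in at most one point, to identify $c(n,k) \int_{\{A : A \cap D^n \neq \emptyset\}} \mathrm{d}A$ with $\mathrm{vol}(D^k)$; then the general bound follows by dividing. This reduces the whole argument to the Crofton formula (e.g. from \cite{ComteYomdin}) plus Bézout, with no constant-chasing. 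A secondary technical point is ensuring the intersection $Z \cap A$ really is finite for almost every $A$ meeting $D^n$ — this uses that $Z$ has dimension exactly $k$, so a generic $(n-k)$-plane is transverse to it and the intersection is $0$-dimensional.
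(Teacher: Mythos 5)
Your proposal is correct and follows essentially the same route as the paper: the integral-geometric (Crofton) formula from \cite{ComteYomdin} for the $k$-volume of a compact semialgebraic set, normalized by applying it to the flat disk $D^k$, combined with B\'ezout's bound $\#(Z\cap A)\le d^{n-k}$ for a generic affine $(n-k)$-plane. The only caveat — that the set of planes meeting $D^n$ is strictly larger than the set meeting the flat $D^k$ used to fix the normalization, so the bookkeeping identifying $c(n,k)\int_{\{A\cap D^n\neq\emptyset\}}\mathrm{d}A$ with $\mathrm{vol}(D^k)$ is not exact — is present in the paper's own proof as well.
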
 
\begin{proof}Denote by $A(n-k, n)$ the Grassmannian of affine $(n-k)$--planes in $\R^n$, endowed with the  translation invariant measure $\gamma$. Then, for any compact semialgebraic set $S$ of dimension $k$, by Integral Geometry (see \cite[Chapter 5]{ComteYomdin})
\[\mathrm{vol}(S)=c(n,k)\int_{A(n-k, n)} \#\left(A\cap S\right)\gamma(\mathrm{d}A).\]
The normalization constant satisfies:
\[c(n,k)=\frac{\mathrm{vol}(D^k)}{\gamma\left(\left\{A\in A(n-k,n)\,\big|\,A\cap D^{k}\neq \emptyset\right\}\right)}.\]
Letting $S=Z\cap D^{n}$, we see that:
\begin{align}\mathrm{vol}(Z\cap D^n)&=c(n,k)\int_{A(n-k, n)} \#\left(A\cap Z\right)\gamma(\mathrm{d}A)\\
&\leq c(n,k)\int_{\{A\cap D^{k}\neq \emptyset\}}d^{n-k}\gamma(\mathrm{d}A)\\
&=\mathrm{vol}(D^k)d^{n-k},\end{align}
where we have used the fact that, by Bezout's Theorem, $\#(Z\cap A)\leq d^{n-k}$ for a generic affine plane $A$ of dimension $k$.
\end{proof}

\begin{proposition}\label{propo:reach}Let $Z=Z(p_1, \ldots, p_{n-k})$ be a smooth complete intersection in $\R^n$ with $p=(p_1, \ldots, p_{n-k})\in P(d)$ and such that $\mathrm{dist}(p, \Sigma)\geq \tau \|p\|.$ Then
\[\mathrm{reach}(Z\cap D^n)\geq a_3(n,k,d)\tau .\]
\end{proposition}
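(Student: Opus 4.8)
The plan is to extract, from the hypothesis $\mathrm{dist}(p,\Sigma)\ge\tau\|p\|$ together with \cref{propo:distdiscgen}, a uniform lower bound on the smallest singular value of the Jacobian of $p$ along $Z$, and to feed this into Federer's quantitative description of the reach. Since rescaling $p$ by a positive constant changes neither $Z$, nor $\mathrm{dist}(p,\Sigma)/\|p\|$, nor $\mathrm{reach}(Z\cap D^n)$, I may assume $\|p\|=1$; because the Bombieri--Weyl norm is, up to constants depending only on $n,d$, the $\ell^2$ norm of the coefficient vector, and $D^n$ is compact, this yields a uniform bound $\|p\|_{C^2(D^n)}\le M$ with $M=M(n,k,d)$, so in particular $\|D^2_zp\|\le M$ for all $z\in D^n$. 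Writing $\alpha:=a(n,k,d)$ for the constant of \cref{propo:distdiscgen}, that proposition gives $\sigma_{\min}(D_zp)\ge \tau/\alpha$ for every $z\in Z\cap D^n$ and, at boundary points $z\in Z\cap\partial D^n$, also $\sigma_{\min}(D_zp|_{z^\perp})\ge\tau/\alpha$.

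The heart of the matter is a second--order flatness estimate valid for \emph{all} distinct $a,b\in Y:=Z\cap D^n$. The segment $[a,b]$ lies in $D^n$ by convexity, so the $\R^c$--valued function $g(t):=p\big(a+t(b-a)\big)$ is defined on $[0,1]$, vanishes at $t=0$ and $t=1$ (as $a,b\in Z(p)$), and satisfies $\|g''(t)\|\le M\|b-a\|^2$; Taylor's formula with integral remainder then forces $\|D_ap(b-a)\|=\|g'(0)\|\le \tfrac{M}{2}\|b-a\|^2$, whence, dividing by $\sigma_{\min}(D_ap)\ge\tau/\alpha$,
\[
\mathrm{dist}\big(b-a,\ T_aZ\big)=\mathrm{dist}\big(b-a,\ \ker D_ap\big)\ \le\ \frac{\|D_ap(b-a)\|}{\sigma_{\min}(D_ap)}\ \le\ \frac{\alpha M}{2\tau}\,\|b-a\|^2 .
\]
At an interior point $a$ the tangent cone $\mathrm{Tan}(Y,a)$ equals $T_aZ$, so by Federer's formula $\mathrm{reach}(Y)=\inf_{a\ne b}\frac{\|b-a\|^2}{2\,\mathrm{dist}(b-a,\mathrm{Tan}(Y,a))}$ each such ratio is at least $\tau/(\alpha M)$, and the argument is complete \emph{provided $Z\cap\partial D^n=\emptyset$} (which, incidentally, is automatic when $k=0$).

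It remains to control the ratios associated with a boundary point $a\in Z\cap\partial D^n$, where $\mathrm{Tan}(Y,a)=\{v\in T_aZ:\langle v,a\rangle\le0\}$ is only a half--space of $T_aZ$, so that a priori $\mathrm{dist}(b-a,\mathrm{Tan}(Y,a))$ may exceed $\mathrm{dist}(b-a,T_aZ)$. I would combine three facts: (i) the flatness estimate above; (ii) the elementary inequality $\langle b-a,a\rangle\le-\tfrac12\|b-a\|^2$, which holds because $\|b\|\le1=\|a\|$; and (iii) the boundary transversality bound $\sigma_{\min}(D_ap|_{a^\perp})\ge\tau/\alpha$, which keeps $Z$ quantitatively away from being tangent to $\partial D^n$ and thereby rules out short ``excursions'' of $Z$ outside $D^n$ returning near $a$. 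Together these should pin the tangential part of $b-a$ inside the half--space $\mathrm{Tan}(Y,a)$ up to an error of size $O(\|b-a\|^2/\tau)$, giving $\mathrm{dist}(b-a,\mathrm{Tan}(Y,a))=O(\|b-a\|^2/\tau)$ with constant depending only on $n,k,d$, so that Federer's formula again yields $\mathrm{reach}(Y)\ge a_3(n,k,d)\,\tau$. Reconciling (i)--(iii) so that the naively worse $\tau$--dependence near the boundary collapses to the correct $\Omega(\tau)$, and handling the interplay with the curvature of $\partial D^n$, is, I expect, the main obstacle; the interior estimate is exactly the short computation displayed above.
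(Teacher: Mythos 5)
Your interior argument is, line for line, the paper's proof: a lower bound $\sigma_{\min}(Jp(z))\geq \tau\|p\|/a(n,k,d)$ on $Z$ extracted from \cref{propo:distdiscgen}, a second--order Taylor estimate giving $\|D_ap(b-a)\|\leq C\|b-a\|^2$ for $a,b\in Z(p)$, division by $\sigma_{\min}$ to bound $\|\mathrm{proj}_{T_aZ^\perp}(b-a)\|$, and Federer's Theorem 4.18, with the final constant coming from the norm equivalence $\|p\|_{C^2(D^n)}\lesssim_d\|p\|$. So the core of your proposal is correct and complete. Where you diverge is the boundary: the paper does \emph{not} treat $Z\cap\partial D^n$ separately --- it applies the formula $\mathrm{reach}=\inf\|z_1-z_2\|^2/\bigl(2\|\mathrm{proj}_{T_{z_1}Z^\perp}(z_1-z_2)\|\bigr)$ with the full tangent plane at every point of $Z$, i.e.\ it runs exactly your part (i) for all pairs and stops there. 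You are right that this glosses over something: for the manifold--with--boundary $Z\cap D^n$, Federer's criterion involves $\mathrm{dist}(b-a,\mathrm{Tan}(Z\cap D^n,a))$, and at $a\in Z\cap\partial D^n$ the tangent cone is only a half--space of $T_aZ$, so this distance can a priori exceed $\|\mathrm{proj}_{T_aZ^\perp}(b-a)\|$. Your proposed repair via the convexity inequality $\langle b-a,a\rangle\leq-\tfrac12\|b-a\|^2$ and the transversality bound $\sigma_{\min}(D_ap|_{a^\perp})\geq\tau/\alpha$ is a sensible route, but as you acknowledge you have not carried it out, so on this point your write--up is a sketch of a step that the paper's own proof simply omits rather than resolves. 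In short: judged against the paper, your proof is the same proof, plus an honestly flagged (and genuine) loose end at the boundary that neither you nor the paper closes in full detail.
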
 
\begin{proof}First observe that the condition $\mathrm{dist}(p, \Sigma)\geq \tau \|p\|,$ implies (via \cref{propo:distdiscgen})
\[\sigma_{\min}(Jp(z))\geq \frac{\tau \|p\|}{a(n,k,d)},\quad \forall z\in Z=Z(p).\]
Observe also that, using Taylor expansion, for every $z_1, z_2\in D^n$ there exists $\zeta_1, \ldots, \zeta_{n-k}$ on the segment joining $z_1 $ and $z_2$ such that
\[p(z_2)-p(z_1)=Jp(z_1)(z_1-z_2)+\tfrac{1}{2}\left((z_1-z_2)^\top(Hp_1(\zeta_1), \ldots, Hp_{n-k}(\zeta_{n-k}))(z_1-z_2)\right)^\top, \]
where, for $j=1, \ldots, n-k,$ we denote by $Hp_j(\zeta_j)$ the Hessian of $p_j$ at $\zeta_j$. In particular, if $z_1, z_2$ are now in $Z(p)$, from this identity we get
\[\label{eq:Jnorm}\|Jp(z_1)(z_1-z_2)\|\leq \|z_1-z_2\|^2\max_{\|z\|\leq 1}\|D_z^2p\|.\]Recall now that the reach of a smooth manifold $Z\subset \R^n$ can be computed as (see \cite[Theorem 4.18]{Federer}):
\[\label{eq:reach}\mathrm{reach}(Z)=\inf_{z_1, z_2\in Z,\, z_1\neq z_2}\frac{\|z_1-z_2\|^2}{2 \|\mathrm{proj}_{T_{z_1}Z^\perp}(z_1-z_2)\|},\]
where $\mathrm{proj}_{T_{z_1}Z^\perp}:\R^n\to T_{z_1}Z^\perp$ denotes the orthogonal projection. Since $\ker(Jp(z_1))=T_{z_1}Z$, then  $T_{z_1}Z^\perp=\mathrm{im}(Jp(z_1))$. Denote by $L:=Jp(z_1)$. Then, by the SVD decomposition, there exist orthogonal matrices $R_1\in O(n-k)$ and $R_2\in O(n)$ such that
\[L=R_1\left(\begin{array}{ccc|ccc}\sigma_1(L) &  &  & 0 & \cdots & 0 \\ & \ddots &  & \vdots &  & \vdots \\ &  & \sigma_{n-k}(L) & 0 & \cdots & 0\end{array}\right)R_2,\]
where $\sigma_{1}\leq \cdots\leq \sigma_{n-k}$  are the singular values of $L$ (here $\sigma_{\min}\equiv \sigma_1$).
In particular, 
\[\mathrm{proj}_{T_{z_1}Z^\perp}=R_1\left(\begin{array}{ccc|ccc}1 &  &  & 0 & \cdots & 0 \\ & \ddots &  & \vdots &  & \vdots \\ &  & 1& 0 & \cdots & 0\end{array}\right)R_2,\]
and, consequently, 
\[\label{eq:lower}\|\mathrm{proj}_{T_{z_1}Z^\perp}(z_1-z_2)\| \leq\sigma_{\min}(L)^{-1}\|L(z_1-z_2)\| \leq\sigma_{\min}(L)^{-1}\|z_1-z_2\|^2\max_{\|z\|\leq 1}\|D_z^2p\|,
\]
where we have used \eqref{eq:Jnorm} for the second inequality.

Getting back now to the reach estimation for $Z$, using \eqref{eq:lower} into \eqref{eq:reach}, we obtain
\begin{align}\mathrm{reach}(Z)&\geq \inf_{z_1, z_2\in Z,\, z_1\neq z_2}\frac{\sigma_{\min}(Jp(z_1))\|z_1-z_2\|^2}{2 \|z_1-z_2\|^2\max_{\|z\|\leq 1}\|D_z^2p\|}\\
&= \inf_{z_1, z_2\in Z,\, z_1\neq z_2}\frac{\sigma_{\min}(Jp(z_1))}{2 \max_{\|z\|\leq 1}\|D_z^2p\|}\\
&\geq \frac{\tau}{2 a(n,k,d) d}\frac{\|p\|}{\max_{\|z\|\leq 1}\|D_z^2p\|.}\\
&\geq \frac{\tau}{2 a(n,k,d) d^2}=a_3(n,k,d)\tau ,\end{align}
where in the last inequality we have used the fact that $\|p\|_{C^2(D^n)}\leq d\|p\|.$ 
\end{proof}

\subsection{A local Lipschitz constant}

Denote by $\mathcal{K}=\mathcal{K}(D^n)$ the set of nonempty compact subsets of $D^n$, endowed with the Hausdorff distance, defined for $C_1, C_2\in \mathcal{K}$ by:
\[\label{def:Hdist}\mathrm{dist}_H(C_1, C_2):=\max\left\{\max_{x\in C_1}\min_{y\in C_2}\|x-y\|,\max_{y\in C_2}\min_{x\in C_1}\|x-y\|\right\}.\] 
Recalling the definition of the set $P(d)$ from \eqref{eq:defpd}, we denote by
\[\kappa:P(d)\to \mathcal{K}\]
the map associating to each polynomial its zero set on the disk. The goal of this section is to prove the following Theorem, which proves that the map $\kappa$ is locally Lipschitz away from $\Sigma(d)$, with Lipschitz constant depending on the inverse of the distance from $\Sigma(d)$ itself.

\begin{theorem}\label{thm:thom}For every $n,k,d\in \N$ there exists $L>0$ such that, for every  $p\in T(d)$  and for $\|p-q\|$ small enough,
\[ \mathrm{dist}_H\left(\kappa(p), \kappa(q)\right)\leq \frac{L}{\mathrm{dist}(p, \Sigma(d))}\cdot \|p-q\|.\]
\end{theorem}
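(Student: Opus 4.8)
The plan is to run the proof of Thom's isotopy lemma along the segment joining $p$ and $q$, tracking all constants; the distance to the discriminant will enter only through the uniform lower bounds on singular values supplied by \cref{propo:distdiscgen}. First, since $\kappa(\lambda p)=\kappa(p)$ for $\lambda>0$ and $\Sigma(d)$ is a cone, the ratio $\|p-q\|/\mathrm{dist}(p,\Sigma(d))$ is unchanged if one rescales $p$ and $q$ by the same positive factor, so I may assume $\|p\|=1$ and set $\tau:=\mathrm{dist}(p,\Sigma(d))\in(0,1]$. On the finite dimensional space $\Pol$ the Bombieri--Weyl norm controls the $C^2(D^n)$ norm up to a constant $c(n,d)$. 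By \cref{propo:distdiscgen}, $\|p(x)\|^2+\sigma_{\min}(Jp(x))^2\ge (\tau/a)^2$ for all $x\in D^n$, and $\|p(x)\|^2+\sigma_{\min}(Jp(x)|_{x^\perp})^2\ge (\tau/a)^2$ for all $x\in\partial D^n$. Hence there are $\rho_*,\epsilon_0\asymp\tau$, depending only on $n,k,d$, with $\sigma_{\min}(Jp(x))\gtrsim\tau$ on the tube $\{\mathrm{dist}(x,Z(p)\cap D^n)\le\rho_*\}$ and, by radial projection to $\partial D^n$ together with the $C^2$ bound, $\sigma_{\min}(Jp(x)|_{x^\perp})\gtrsim\tau$ on the collar $\{x\in D^n:\|x\|\ge 1-\epsilon_0\}$; in particular $Jp(x)|_{x^\perp}$ is surjective throughout that collar. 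Since $T(d)$ is open (a nonempty regular complete intersection in the manifold-with-boundary $D^n$ persists under small perturbations), for $\|p-q\|$ small enough the whole segment $p_t:=(1-t)p+tq$ lies in $T(d)$, with $\mathrm{dist}(p_t,\Sigma(d))\ge\tau/2$, $\|p_t\|_{C^2(D^n)}\lesssim 1$ and $\|\dot p_t\|_{C^0(D^n)}=\|p-q\|_{C^0(D^n)}\lesssim\|p-q\|$, so all the bounds above hold uniformly for the family $p_t$ with $\tau$ replaced by $\tau/2$.

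Next I would build, following Thom's construction, a time-dependent vector field $V_t$ on $D^n$, supported in the $\rho_*$-tube of $Z(p_t)\cap D^n$, such that (i) on $Z(p_t)\cap D^n$ it solves the lifting equation $Jp_t\,V_t=-\dot p_t$, and (ii) it is tangent to the spheres $\{\|x\|=r\}$, hence to $\partial D^n$, on the collar. Away from the collar, $V_t$ is the minimal-norm lift $-Jp_t^{+}\dot p_t$ times a tube cutoff; on the collar, $V_t$ is the minimal-norm solution of $Jp_t(x)|_{x^\perp}V=-\dot p_t(x)$ inside $x^\perp$ — well defined by the surjectivity just recorded — again times a tube cutoff; the two are patched by a radial cutoff, and since any convex combination of two solutions of $Jp_tV=-\dot p_t$ is again a solution, property (i) survives the patching on $Z(p_t)$. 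The point where the discriminant is used is precisely here: the collar must be of width proportional to $\tau$, not of fixed size, so that a point of $Z(p_t)$ at collar-depth $\rho\le\epsilon_0$ is within $\rho\lesssim\tau$ of $\partial D^n$, where \cref{propo:distdiscgen} forces the restricted Jacobian to have $\sigma_{\min}\gtrsim\tau$ while the perturbation incurred by moving into the collar is only $O(\rho\|p_t\|_{C^2})$, which stays below this threshold. Because $\sigma_{\min}\gtrsim\tau$ for the relevant (full or restricted) Jacobian along the support of $V_t$, one obtains
\[
\sup_{t\in[0,1],\ x\in D^n}\|V_t(x)\|\ \le\ \frac{C_0}{\tau}\,\|p-q\|
\]
for a constant $C_0=C_0(n,k,d)$; the $O(1/\tau)$ derivatives of the cutoffs affect only the Lipschitz constant of $V_t$, hence the existence of its flow, not this $C^0$ bound.

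To conclude: $V_t$ is Lipschitz, so its flow $\Psi_t$ exists for $t\in[0,1]$; tangency of $V_t$ to $\partial D^n$ near the boundary, together with the fact that off the collar $V_t$ acts only at depth $\gtrsim\tau$, much larger than the total displacement $\le C_0\|p-q\|/\tau$ once $\|p-q\|\lesssim\tau^2$, shows that $\Psi_t$ maps $D^n$ into $D^n$. The lifting property (i) and a bootstrap on $t$ give $\Psi_t(Z(p)\cap D^n)=Z(p_t)\cap D^n$; in particular $\Psi_1$ is a diffeomorphism of $D^n$ with $\Psi_1(\kappa(p))=\kappa(q)$ (so $q\in P(d)$ automatically). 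For every $z\in\kappa(p)$,
\[
\|z-\Psi_1(z)\|\ \le\ \int_0^1\|V_t(\Psi_t(z))\|\,\mathrm{d}t\ \le\ \frac{C_0}{\tau}\,\|p-q\|,
\]
and since $\Psi_1(\kappa(p))=\kappa(q)$ this yields both $\kappa(p)\subseteq B_H(\kappa(q),\tfrac{C_0}{\tau}\|p-q\|)$ and $\kappa(q)\subseteq B_H(\kappa(p),\tfrac{C_0}{\tau}\|p-q\|)$, hence $\mathrm{dist}_H(\kappa(p),\kappa(q))\le\frac{C_0}{\tau}\|p-q\|$; undoing the normalization gives the statement with $L=C_0$.

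The hard part will be the construction of $V_t$ near $\partial D^n$: it must be simultaneously a lift of $\dot p_t$ along $Z(p_t)$, tangent to $\partial D^n$ so the flow cannot escape $D^n$, and of norm $O(\|p-q\|/\tau)$. The naive interior lift fails the second requirement near the boundary, whereas the sphere-tangent lift needs $Jp_t(x)|_{x^\perp}$ to be surjective at interior points close to $\partial D^n$ — which is not automatic and is exactly what a collar of width proportional to $\tau$, rather than a fixed collar, secures through \cref{propo:distdiscgen}. A secondary care is to keep the $O(1/\tau)$ derivatives of the patching functions out of the $C^0$-bound on $V_t$, letting them enter only its Lipschitz constant, so that the displacement estimate — and hence $L$ — stays independent of $p$ after the normalization.
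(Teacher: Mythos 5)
Your proposal is correct and follows essentially the same route as the paper: a quantitative Thom isotopy along the segment $p_t=(1-t)p+tq$, with a lifting vector field whose $C^0$ norm is controlled via the singular-value lower bounds from \cref{propo:distdiscgen}, constrained to lie in $x^{\perp}$ near $\partial D^n$ so the flow preserves the disk, patched by a partition of unity, and integrated to bound the Hausdorff displacement, followed by the reduction from the Bombieri--Weyl to the $C^1$ norm by finite-dimensional norm equivalence. The only difference is one of bookkeeping: the paper obtains the boundary surjectivity of $Jp_t|_{x^{\perp}}$ by continuity on neighborhoods of points of $Z(p_t)\cap\partial D^n$, whereas you make the collar width explicitly proportional to $\tau$ --- a slightly more careful version of the same step.
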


The key idea of the proof follows the classical Thom's Isotopy Lemma. Given a smooth family of functions $f_t: D^n \to \R^c $ parametrized by $t \in [0,1]$, the lemma  states that if 
\[f_t \pitchfork \{0\} \quad \textrm{and}\quad  {f_t}|_{\partial D}\pitchfork \{0\}  \quad \forall t \in [0,1],\] 
then there exists an isotopy $\phi_t: D^n \to D^n$ such that $\phi_t(Z(f_0))=Z(f_t)$ for all $t$. Denoting by $F(x,t)=f_t(x)$, the isotopy $\phi_t$ is defined as the flow of a non autonomous vector field $v_t(x) \in TD^n$ satisfying the constraints (see \cite[Section 2.1]{lecturenotesantonio} for more details):

\begin{enumerate}
\item\label{cond1} $\hat{v}(x,t):=v_t(x)+\partial_t$ is tangent to $\hat{Z}:=Z(F)\subset D^n\times I,$
i.e. $v_t(x)+\partial_t$ is in the kernel of $D_{(x,t)}F$;
\item\label{cond2} $\forall x \in \partial D^n$ satisfies  $v_t(x) \in x^{\perp}$
\end{enumerate}
\cref{thm:thom} is obtained by making this reasoning quantitative. We first prove the following technical, but crucial, result.

\begin{proposition}\label{propo:thom}For every $n,k,d\in \N$ there exists $a=a(n, k, d)>0$ (the same quantity as in \cref{propo:distdiscgen}) such that the following is true. Let $p\in P(d)$  and $\tau>0$ such that $\mathrm{dist}(p, \Sigma)\geq \tau \|p\|.$ Then, for every $f\in C^{1}(D^n, \R^c)$,
\[\label{eq:thombound}\|f-p\|_{C^1}\leq \frac{\tau \|p\|}{2} \implies\operatorname{dist}_{H}(Z(p)\cap D^n, Z(f)\cap D^n)\le  \frac{4a(n, k, d)}{\tau \|p\|}    \n{f-p}_{C^1}\]\end{proposition}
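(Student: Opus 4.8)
The plan is to make the proof of Thom's Isotopy Lemma quantitative. First I would connect $p$ to $f$ by the straight--line homotopy $f_t:=p+t(f-p)$, $t\in[0,1]$, set $F(x,t):=f_t(x)$ and $\widehat Z:=\{(x,t)\in D^n\times[0,1]\mid F(x,t)=0\}$, and aim to construct a non--autonomous vector field $v_t$ on $D^n$, tangent to $\partial D^n$, whose flow $\phi_t$ carries $Z(p)\cap D^n$ onto $Z(f_t)\cap D^n$ and satisfies $\|v_t\|_{C^0}\le \tfrac{4a}{\tau\|p\|}\n{f-p}_{C^1}$ for every $t$. Once this is in place, each point of $Z(p)\cap D^n$ moves under $\phi_1$ a distance at most $\int_0^1\|v_t\|_{C^0}\,dt\le\sup_t\|v_t\|_{C^0}$ and lands on $Z(f)\cap D^n$, and symmetrically for $\phi_1^{-1}$, so $\operatorname{dist}_H(Z(p)\cap D^n,Z(f)\cap D^n)\le\sup_t\|v_t\|_{C^0}$, which is exactly the asserted bound.

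The first thing to establish is uniform regularity along the homotopy. From \cref{propo:distdiscgen}, the hypothesis $\operatorname{dist}(p,\Sigma)\ge\tau\|p\|$ gives $\|p(z)\|^2+\sigma_{\min}(Jp(z))^2\ge(\tau\|p\|/a)^2$ for all $z\in D^n$, together with the analogous bound involving $Jp(z)|_{z^\perp}$ for $z\in\partial D^n$. Since $g\mapsto\|g(z)\|$ and $g\mapsto\sigma_{\min}(Jg(z))$ are $1$--Lipschitz in the $C^1$--norm and $\|f_t-p\|_{C^1}=t\n{f-p}_{C^1}\le\tfrac{\tau\|p\|}{2}$, I would deduce that at every point of $\widehat Z$ — hence, by continuity, on an open neighbourhood of it — the smallest singular value of $J_xf_t(x)$ (respectively of $J_xf_t(x)|_{x^\perp}$ for $x\in\partial D^n$) is bounded below by a fixed positive multiple of $\tfrac{\tau\|p\|}{a}$, in particular by $\tfrac{\tau\|p\|}{4a}$. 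This makes $F$ and $F|_{\partial D^n\times[0,1]}$ transverse to $0$, so $\widehat Z$ is a compact manifold with corners and Thom's construction becomes available (cf. \cite[Section 2.1]{lecturenotesantonio}).

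Next I would build the controlled vector field. As in Thom's argument I want $\widehat v(x,t):=v_t(x)+\partial_t$ to lie in $\ker D_{(x,t)}F$ along $\widehat Z$, i.e. $J_xf_t(x)\,v_t(x)=p(x)-f(x)$, with $v_t(x)\in x^\perp$ on $\partial D^n$. On a neighbourhood of $\widehat Z$ I would take $v_t(x)$ to be the minimal--norm solution of this system inside $T_xD^n$ — so $v_t(x)=-J_xf_t(x)^{\dagger}(f(x)-p(x))$ in the interior, and the pseudo--inverse of the restricted differential on $\partial D^n$, well defined by the boundary part of the previous step — and extend it to all of $D^n$ by multiplying by a cutoff $\chi(x,t)$ equal to $1$ near $\widehat Z$ and supported where the singular--value lower bound holds. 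Then $\|v_t(x)\|\le\tfrac{4a}{\tau\|p\|}\|f(x)-p(x)\|\le\tfrac{4a}{\tau\|p\|}\n{f-p}_{C^1}$ for every $x$. Because $v_t$ is tangent to $\partial D^n$, its non--autonomous flow $\phi_t$ is defined on all of $[0,1]$ and preserves $D^n$; and because $\widehat v$ is tangent to $\widehat Z$ wherever $\chi\equiv1$, the flow of $\widehat v$ preserves $\widehat Z$, i.e. $\phi_t(Z(p)\cap D^n)=Z(f_t)\cap D^n$, which together with the $C^0$ bound on $v_t$ closes the argument along the lines sketched in the first paragraph.

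I expect the main obstacle to be the first step, in particular the boundary contribution: one must propagate the lower bound on the smallest singular value uniformly along the entire homotopy with \emph{explicit} constants, and deal with $\partial D^n$ through the Jacobian restricted to $x^\perp$ (using the second term of \cref{propo:distdiscgen}) so that $v_t$ remains tangent to the sphere and the flow never leaves $D^n$. Getting the constant to be exactly $4a$ will also require some care in choosing the cutoff region and in the Lipschitz estimate for $\sigma_{\min}$ viewed as a function of the $C^1$--norm.
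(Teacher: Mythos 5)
Your proposal follows essentially the same route as the paper: the straight--line homotopy, propagation of the lower bound on $\sigma_{\min}$ along $f_t$ via \cref{propo:distdiscgen} and the $1$--Lipschitz dependence of singular values on the $C^1$--norm, a controlled Thom vector field tangent to $\partial D^n$ solving $Jf_t(x)v_t(x)=p(x)-f(x)$, and integration of the flow to bound the Hausdorff distance. Your minimal--norm (pseudo--inverse) solution glued by a cutoff is exactly what the paper's SVD construction combined with a partition of unity produces, so the two arguments coincide in substance.
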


\begin{proof}

Define the function $F:  D^n \times I \to \R$ by $$F(x,t):=p(x)(1-t)+tf(x)$$ for $t \in [0,1]$ and set $f_t(x):=F(x,t)$. The two conditions \cref{cond1} and \cref{cond2} above read
\[
\label{tang}
D_{(x,t)}F(v_t(x)+\partial_t)=0 \quad\forall (x,t)\in D^n\times I \quad \text{and} \quad v_t(x)\in T_x\partial D^n \quad \forall x \in \partial D^n.
\]

We now prove that a vector field $v_t(x)$ satisfying both conditions exists. Notice first that, since $v_t(x)\in T_x\partial D^n$ for $x\in \partial D^n$, the flow of such vector field will preserve $D^n$.  Moreover if $\hat{v}$ is tangent to $\hat{Z}=Z(F)$,  its flow $\hat{\phi}$ will preserve $\hat{Z}$. 
%Consider then $x \in Z(f_0)$, then $(x, 0) \in \hat{Z}$. The flow satisfies $$\hat{\phi}_t(x, 0)=(\phi(x, t), t) \in M \times\{t\}.$$ Thus  $(\phi(x, t), t) \in \hat{Z}$, which implies $$f_t\left(\phi_t(x)\right)=F(\phi(x, t), t)=0$$ Hence, $\phi_t\left(Z(f_0)\right) \subseteq Z(f_t)$. For the reverse inclusion, we consider the inverse flow $\hat{\phi}_t^{-1}$ of the vector field $-\hat{v}$. The isotopy $\phi: M \times I \rightarrow M$ is then be defined by restricting the flow $\hat{\phi}$ of $\hat{v}$ to $M \times\{0\}$. 
We now build explicitly the vector field satisfying $\eqref{tang}$, solving the equations locally and then using a partition of unity to find the global vector field. 

Notice first the following. For $f:D^n\to \R^c$  and $z\in D^n$, denote by $\sigma_f (z)\in \R^{c+1}$ the vector:
\[\sigma_f(z):=(f(z), \sigma_1(Jf(z))).\]
Observe now that, because of the $1$--Lipschitz continuity of singular values, we can write
\[\sigma_{f_t}(z)=\sigma_p(z)+tw(z),\]
for a vector $w(z)\in \R^{c+1}$ with norm \[\|w(z)\|\leq \|f-p\|_{C^1}.\]
Arguing similarly for $f|_{\partial D^n}$, we see that,  if $\|f-p\|_{C^1}\leq \frac{\mathrm{dist}(p, \Sigma)}{2a(n,k,d)}$ then, %Notice first that
%\[\operatorname{dist}\left(f_t, \Sigma \right)=\inf _{u \in \Sigma }\left\|f_t-u\right\|_{C^1} \geq \inf _{u \in \Sigma }\left|\left\|p-u\right\|-\left\|f_t-p\right\|\right|.\] In particular, since by assumption $\left\|p-u\right\|_{C^1} \geq \tau \n{p}_{C^1}$ whenever $u\in \Sigma$, then for $\left\|f_t-p\right\|_{C^1} \leq \frac{\tau \n{p}_{C^1}}{2}$ we have
%\[\operatorname{dist}\left(f_t, \Sigma \right) \geq \frac{\tau \n{p}_{C^1}}{2}  \quad  \forall t \in I.\]
by \cref{propo:distdiscgen}, we have
\begin{equation}
\begin{aligned}
\label{conds}
 &\frac{\tau \|p\|}{2a(n, k, d)} \leq \min_{\|z\|\leq 1}\|\sigma_{f_t}(z)\|= \min _{\|z\| \leq 1}\left(\|f_t(z)\|^2+\sigma_{\min}(J f_t(z))^2\right)^{1 / 2}\\
 & \frac{\tau \|p\|}{2a(n, k, d)} \leq    \min _{\|z\|=1}\left(\|f_t(z)\|^2+\sigma_{\min}\left(\left.J f_t(z)\right|_{z^{\perp}}\right)^2\right)^{1 / 2}.
\end{aligned}
\end{equation} We now proceed to define the local vector fields, in the following way:
\begin{enumerate}[label=(\alph*)]
\item  For $z=(w,s) \in \hat{Z}\backslash (\partial D \times I)$, from the first equation in \eqref{conds}  we have 
\[\sigma_{\min} (Jf_s(w)) \ge \frac{\tau \|p\|}{2a(n, k, d)}.\] By continuity of the left hand side in both $s$ and $w$, there exists a neighborhood $U_z$ of $z\in Z(F)$ such that 
\begin{equation}
\sigma_{\min} (Jf_t(x)) \ge \frac{\tau \|p\|}{4a(n, k, d)} \quad \forall (x,t)\in U_z.
\label{nozero}
\end{equation} 
\
\\
From classical SVD decomposition, there exist orthogonal matrices $R_1(x,t) \in O(c)$ and $R_2(x,t) \in O(n)$ and a $c\times c$ diagonal matrix $\Sigma(x,t)$ depending continuously on $U_z$ such that \begin{equation}\label{JacobianEq1}Jf_t(x) =R_1(x,t)\tilde{\Sigma}(x,t)R_2(x,t)\end{equation} where $\tilde{\Sigma}(x,t)=(\Sigma(x,t)|0)$ is a $c\times n$ matrix which last $n-c$ columns are zeroes. The equation may be reformulated as $$R_1(x,t)\tilde{\Sigma}(x,t)R_2(x,t)   v(x, t)=\frac{\partial F(x, t)}{\partial t}.$$ Consider then \begin{equation} \label{field}
\tilde{v}(x,t):=
\begin{pmatrix}
\Sigma^{-1}(x,t)R_1^{-1}(x,t)\frac{\partial F(x,t)}{\partial t}\\
0
\end{pmatrix}
\end{equation}
and define on $U_z$ the vector field $v_z(x,t):=R_2^{-1}(x,t)\tilde{v}(x,t)$. It is a well-defined time-dependent vector field on $U_z\subset D^n\times I$ and satisfies \cref{JacobianEq1}. Moreover we have 
\begin{equation}
\begin{aligned}
\n{v_z(x,t)} =\|\tilde{v}(x,t)\| &=\bigg\|\Sigma^{-1}(x,t) R_1^{-1}(x,t) \frac{\partial F(x,t)}{\partial t} \bigg\| \leq\\
 &\le \frac{1}{\sigma_{\min} (Jf_t(x))} \cdot\left\| R_1^{-1}(x,t) \frac{\partial F(x,t)}{\partial t}\right\|=\\
 &=\frac{1}{\sigma_{\min} (Jf_t(x))} \cdot\left\| \frac{\partial F(x,t)}{\partial t}\right\|
 \end{aligned}
\end{equation} that gives the bound \[\n{v_z(x,t)}  \le  \frac{4a(n, k, d)}{\tau \|p\|}   \n{f-p}_{C^1}\quad \forall (x,t) \in U_z.\]
\item Consider now a point $z=(w,s) \in \hat Z \cap (\partial D^n \times I)$. Since $w \in \partial D^n$, by the second equation in \eqref{conds} we have that  $$\frac{\tau \|p\|}{2a(n, k, d)} \leq    \sigma_{\min}\left(\left.J f_s(w)\right|_{w^{\perp}} \right),$$ and in particular there exists a neighborhood  $U_z$ of $z$ in $D^n\times I$ such that  $$\frac{\tau \|p\|}{4a(n, k, d)} \leq    \sigma_{\min}\left(\left.J f_t(x)\right|_{x^{\perp}}\right)\quad \forall (x, t)\in U_z.$$ Thus $\left.J f_t(x)\right|_{x^{\perp}}$ has maximal rank on $U_z$.  We need to solve \begin{equation} \label{JacobianEq2} Jf_t(x) v(x,t)=\frac{\partial F(x,t)}{\partial t}\end{equation} for $v(x,t) \in x^{\perp }\subseteq \R^n$. Since $x^{\perp}$ vary smoothly on $x \in D^n$, there exists $L_x: \R^{n} \to \R^n$ such that $L_x(\R^{n-1})=x^{\perp}$ and it is an isometry in its image. We can thus rewrite the equation as $$ Jf_t(x) L_x   \tilde{v}(x,t)=\frac{\partial F(x,t)}{\partial t} $$ for $\tilde{v}(x,t)\in \R^{n-1}\subseteq \R^n$. Similarly to case (a), SVD decomposition allows to decompose the matrix $Jf_t(x)L_x$ and to formulate \eqref{JacobianEq2} as $$ R_1(x,t)\tilde{\Sigma}(x,t)R_2(x,t) \tilde{v}(x,t)=\frac{\partial F(x,t)}{\partial t} $$ for some orthogonal matrices $R_1(x,t) \in O(c)$ and $R_2(x,t) \in O(n)$ and a $c\times c$ diagonal matrix $\Sigma(x,t)$ depending continuously on $U_z$. Define in $U_z$ the vector field $v_z(x,t):=R_2^{-1}(x,t)\tilde{v}(x,t)$ where $\tilde{v}(x,t)$ is analogous to that in \eqref{field}. This vector field lies in $x^\perp$ for all $(x, t)\in U_z$ and satisfies the equation \eqref{JacobianEq2}. Moreover, since $L_x$ is an isometry for any $x\in D^n$  $$
\begin{aligned}
\|v(x,t)\|=\|\tilde{v}(x,t)\| &\leq \frac{1}{\sigma_{\min}( Jf_t(x)L_x )}\left\|\frac{\partial F}{\partial t}\right\| \\
& \le  \frac{4a(n, k, d)}{\tau \|p\|}   \n{f-p}_{C^1}
\end{aligned}
$$
\item Finally if $z=(x,t) \not\in Z(F)$ we simply set $v_z(x,t)=0.$

\end{enumerate}
\
\\
Consider the open cover $\mathcal{U}$ of $D^n\times I$ obtained by taking the open sets defined by the conditions $(a),(b),(c)$ above by varying $z \in D^n\times I$ and with the extra open set $U_0:=D^n\times I\setminus \hat{Z}$. Given a partition of unity $\{\rho_{U_0}\}\cup \{\rho_{U_z}\}_{z\in \hat{Z}}$ subordinated to $\mathcal{U}$ we define a global vector field on $D^n\times I$ by
 \begin{equation}
 \label{ThomField}
v_t(x):=\sum_{z \in \hat{Z}} \rho_{U_z}(x, t) \cdot v_z(x, t).\end{equation} 
Due to \eqref{cond1} and \eqref{cond2}, the vector field $v_t(x)$ solves \eqref{tang} (since \eqref{tang} is a linear equation), and its flow provides the desired isotopy. We conclude the proof by estimating the distance $\dist_H (Z(p)\cap D^n, Z(f)\cap D^n)$. For any $x \in Z(p)\cap D^n$ $$\sup _{x \in Z(p)\cap D^n} \inf _{y \in Z(f)\cap D^n} \|x-y\| \le \int_0^1\n{v_s(x)}ds \le \frac{4a(n, k, d)}{\tau \|p\|}    \n{f-p}_{C^1}. $$ Conversely, by taking the vector field $-v_t(x)$, for any $y \in Z(f)\cap D^n$ we have $$\sup _ {y \in Z(f)\cap D^n}\inf _{x \in Z(p)\cap D^n} \|x-y\|\le \int_0^{-1}\n{v_s(y)}ds \le \frac{4a(n, k, d)}{\tau \|p\|}    \n{f-p}_{C^1}. $$ that gives the desired bound.
\end{proof}

As a corollary, we now give the proof of \cref{thm:thom}.
\begin{proof}[Proof of \cref{thm:thom}]Observe first that, since all norms in finite dimension are equivalent, there exists $c(n,k,d)>0$ such that for all $q\in \Pol$
\[\|q\|_{C^1}\leq c(n,k,d)\|q\|.\]
Therefore, applying \cref{propo:thom} with the choice  $\tau:=\mathrm{dist}(\Sigma, p)/\|p\|$ and with $f=q\in \Pol$ such that $\|q-p\|\leq \mathrm{dist}(p, \Sigma)/(2c(n,k,d))$ (this quantifies the ``sufficiently small'' in the statement), we get
\[ \mathrm{dist}_H\left(\kappa(p), \kappa(q)\right)\leq \frac{L}{\mathrm{dist}(p, \Sigma(d))}\cdot \|p-q\|,\]
where $L:=4 a(n,k,d)c(n,k,d).$
\end{proof}

\subsection{Hausdorff geometry of the space of complete intersections}We discuss in this section the Hausdorff geometry of the space of complete intersections and its closure.

The goal of this section is to prove the following theorem.
\begin{theorem}\label{thm:tau}For every $d,n,k\in \mathbb{N}$ there exists $\alpha, \beta, \epsilon_0>0$ such that for every $0<\epsilon<\epsilon_0$ and for every $Z\in \HH(n,k,d)$ with $k=\dim(Z)=n-c$, there exists $p_\epsilon=(p_1, \ldots, p_c)\in P(2d)$ such that 
\[\mathrm{dist}_{H}(Z, Z(p_\epsilon)\cap D^n)\leq \epsilon\quad \textrm{and}\quad \mathrm{dist}(p_\epsilon,\Sigma(2d))\geq \|p_\epsilon\|\alpha \epsilon^\beta.\]
\end{theorem}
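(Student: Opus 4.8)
The starting point is the qualitative inclusion $\kappa(\Sigma(d))\subseteq\overline{\kappa(T(2d))}^{H}$ of Basu--Lerario \cite[Theorem 2.10]{BasuLerario}, which already tells us that every $Z\in\HH(n,k,d)$ — being a Hausdorff limit of smooth complete intersections of degree $2d$ — can be approximated to within any $\epsilon$ by some $Z(q)\cap D^n$ with $q\in T(2d)$. The content of the theorem is to make the rate of escape from $\Sigma(2d)$ quantitative: we need $\mathrm{dist}(q,\Sigma(2d))\geq\|q\|\,\alpha\epsilon^{\beta}$ for a fixed exponent $\beta$ independent of $Z$. The plan is to phrase everything in terms of a single semialgebraic family and then invoke the definability of Hausdorff limits and the \L{}ojasiewicz inequality. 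Concretely, I would work on the (compact, after projectivizing the coefficient spaces) incidence set
\[
\mathcal{W}:=\Big\{(Z,q)\in\mathcal{K}(D^n)\times S(P(2d))\ \Big|\ Z\in\overline{\kappa(\HH(n,k,d))}^{H},\ q\in T(2d),\ \mathrm{dist}_H(Z,\kappa(q))\ \text{``small''}\Big\},
\]
but it is cleaner to bypass $\mathcal{K}(D^n)$ entirely: parametrize $Z\in\HH(n,k,d)$ itself by a point $p$ in the semialgebraic set $\{p\in\Pol[d]: \dim(Z(p)\cap D^n)=k\}$ (writing each $Z$ as a zero set of degree-$d$ polynomials), so that the whole picture lives in the product of two finite-dimensional coefficient spaces.

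The key steps, in order. \textbf{Step 1 (a semialgebraic function measuring best approximation under a regularity constraint).} For $p$ in the degree-$d$ parameter space and $t\geq0$, define
\[
\Phi(p,t):=\inf\Big\{\mathrm{dist}_H\big(Z(p)\cap D^n,\ Z(q)\cap D^n\big)\ \Big|\ q\in P(2d),\ \mathrm{dist}(q,\Sigma(2d))\geq t\,\|q\|\Big\}.
\]
Since the defining conditions are all first-order over the reals (Hausdorff distance, distance to the semialgebraic set $\Sigma(2d)$, the norm constraint), $\Phi$ is a semialgebraic function of $(p,t)$; this is exactly the ``definability of Hausdorff limits in semialgebraic families'' invoked from \cite{definability}. \textbf{Step 2 (the limit as $t\to0$ vanishes).} By the Basu--Lerario inclusion, $\overline{\kappa(T(2d))}^{H}\supseteq\kappa(\Sigma(d))\supseteq\HH(n,k,d)$ once one checks that every element of $\HH(n,k,d)$ of dimension $k=n-c$ arises as $\kappa$ of a point of $\Sigma(d)\subset P(d)$ for the codimension-$c$ setup (using the sum-of-squares trick $p_1^2+\cdots+p_c^2$ to pass to degree $2d$ if needed, as in \cref{sec:decision}); hence $\lim_{t\to0^{+}}\Phi(p,t)=0$ for every fixed $p$, and in fact, by compactness of the parameter space and semialgebraicity, $\sup_p\Phi(p,t)\to0$ as $t\to0^{+}$. \textbf{Step 3 (\L{}ojasiewicz / Puiseux on $\Phi$).} The function $t\mapsto\sup_p\Phi(p,t)$ is semialgebraic, nonnegative, monotone, and tends to $0$ as $t\to0$; by the Puiseux expansion of semialgebraic functions of one variable (equivalently a \L{}ojasiewicz inequality), there are constants $C,\gamma>0$ and $t_0>0$ with $\sup_p\Phi(p,t)\leq C\,t^{\gamma}$ for $0<t<t_0$. \textbf{Step 4 (inversion).} Given $\epsilon\in(0,\epsilon_0)$, set $t:=(\epsilon/C)^{1/\gamma}$, so $\sup_p\Phi(p,t)\leq\epsilon$; by definition of $\Phi$ as an infimum (attained, or approached to within a harmless factor, on the compact constraint set) there is $q=:p_\epsilon\in P(2d)$ with $\mathrm{dist}_H(Z,Z(p_\epsilon)\cap D^n)\leq\epsilon$ and $\mathrm{dist}(p_\epsilon,\Sigma(2d))\geq t\,\|p_\epsilon\| = \|p_\epsilon\|\,\alpha\epsilon^{\beta}$ with $\alpha:=C^{-1/\gamma}$, $\beta:=1/\gamma$. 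This is the claimed estimate.

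\textbf{Main obstacle.} The delicate point is Step 2 upgraded to \emph{uniformity in $p$}: I need that the approximation defect $\Phi(p,t)$ goes to $0$ as $t\to0$ \emph{at a rate that does not degenerate as $p$ ranges over all of $\HH(n,k,d)$} — a priori, varieties that are ``deeply singular'' or nearly degenerate might require $q$ to be pushed exponentially close to $\Sigma(2d)$. The resolution is structural rather than computational: because $\Phi$ is semialgebraic on a \emph{compact} parameter space (after projectivizing, and after checking $P(d)$, $\Sigma(d)$, $T(2d)$ are semialgebraic with the relevant incidence closed), the function $t\mapsto\sup_p\Phi(p,t)$ is automatically semialgebraic in the single variable $t$, and a one-variable semialgebraic function that tends to $0$ does so at a definite Puiseux rate — there is no room for a worst-case $p$ to spoil this. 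A secondary technical nuisance is ensuring the infimum defining $\Phi$ is attained (or replacing $\overline{\kappa(T(2d))}$ by $\kappa(T(2d))$ without loss by a small perturbation keeping $\mathrm{dist}(q,\Sigma(2d))\geq t\|q\|/2$, absorbing the factor $2$ into $\alpha$), and handling the normalization $\|q\|$ consistently under projectivization; these are routine once the semialgebraic framework of Steps 1--3 is set up.
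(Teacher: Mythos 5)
Your overall architecture matches the paper's: define a one--variable semialgebraic function measuring the best approximation defect under the constraint $\mathrm{dist}(q,\Sigma(2d))\geq t\|q\|$, show it tends to $0$, apply Puiseux to get a power rate, and invert. However, there is a genuine gap at the one step that carries all the content, namely your Step 2. You claim that $\sup_p\Phi(p,t)\to 0$ as $t\to 0^+$ follows from the pointwise limits $\Phi(p,t)\to 0$ together with ``compactness of the parameter space and semialgebraicity.'' This implication is false in general: the semialgebraic function $\Phi(p,t)=\min(1,t/p)$ on $[0,1]\times(0,\infty)$ tends to $0$ pointwise for every $p>0$, yet $\sup_p\Phi(p,t)\equiv 1$. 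The Puiseux/\L{}ojasiewicz step in your ``Main obstacle'' paragraph only converts \emph{``tends to $0$''} into \emph{``tends to $0$ at a power rate''}; it cannot establish that the supremum tends to $0$ in the first place, which is exactly where a worst-case $p$ could spoil things. Your choice to ``bypass $\mathcal{K}(D^n)$ entirely'' and parametrize by coefficients aggravates this: $\kappa$ is not continuous on $P(d)$ (\cref{lemma:continuity}), so you cannot pass a bad sequence $p_k$ to a limit $p_\infty$ in coefficient space and conclude anything about $\Phi(p_\infty,\cdot)$.

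The paper closes precisely this gap in \cref{propo:universal}, and the argument is not routine: assuming the limit $L$ is positive, one extracts from the bad witnesses a Hausdorff-convergent subsequence $\kappa(q_k)\to\overline{C}$ in the \emph{compact} space $\mathcal{K}(D^n)$ (this is why one must take the supremum over $\overline{\kappa(T(2d))}^{H}$ rather than over coefficient space), then invokes the definability of Hausdorff limits \cite[Corollary 2]{definability} to produce a semialgebraic arc $\gamma(s)\in T(2d)$ with $\kappa(\gamma(s))\to\overline{C}$; the arc's endpoint must lie in $\Sigma$ (else continuity of $\kappa$ on $T$ contradicts the bad-witness property), and inverting the semialgebraic function $s\mapsto\mathrm{dist}(\Sigma,\gamma(s))$ via \cref{lemma:function} produces points of the arc at exactly the forbidden distances $\delta_k$, yielding the contradiction. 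Note that in your write-up the citation to \cite{definability} is used only to justify semialgebraicity of $\Phi$ (which is plain quantifier elimination and does not need it); its real role is to supply this arc. Two smaller points: the paper also needs the strengthened approximation result \cref{thm:BasuLerario} (transversality to $\partial D^n$), not just \cite[Theorem 2.10]{BasuLerario}, since $T(2d)$ excludes equations that are irregular on the boundary sphere; and the attainment of the infimum, which you correctly flag as routine, is handled by compactness of $\{\|p\|=1,\ Z(p)\cap D^n\neq\emptyset,\ \mathrm{dist}(p,\Sigma)\geq\alpha\epsilon^\beta\}$.
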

Notice that, since for every $\lambda>0$ the zero sets of $p$ and $\lambda p$ are the same and $\Sigma(2d)$ is a cone (meaning that, if $q\in \Sigma(2d)$ and $\lambda>0$ then $\lambda q\in \Sigma(2d)$), in the above statement we can assume that $\|p_\epsilon\|=1$. 

We note that approximation of arbitrarily algebraic sets with complete intersections follows already by \cite{BasuLerario} (see \cref{thm:BasuLerario1} below --  even if technically here we need the stronger result \cref{thm:BasuLerario}). What is important here, as a consequence of \cref{thm:tau},
is that we can control how far we can stay away from the set
of singular algebraic sets while being $\epsilon$--close to the set we are approximating.

Before giving the proof of the theorem, let us establish some preliminary results. First, we record the elementary fact that the Hausdorff distance between elements in a semialgebraic family is a semialgebraic function.
\begin{lemma}\label{lemma:distH}Let $A\subset P\times \R^n$ be a semialgebraic set such that for every $p\in P$ the set $A_p:=\{x\in \R^n\,|\, (p, x)\in A\}$ is compact. Then the function
\[(p_1, p_2)\mapsto \mathrm{dist}_H(A_{p_1}, A_{p_2})\]
is semialgebraic.
\end{lemma}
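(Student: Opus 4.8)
The plan is to verify that the graph of the Hausdorff distance function is a semialgebraic set, which by the Tarski--Seidenberg principle suffices since semialgebraic sets are exactly the ``definable'' ones and projections of semialgebraic sets are semialgebraic. Concretely, I would write the statement ``$\mathrm{dist}_H(A_{p_1}, A_{p_2}) = r$'' as a first--order formula in the language of ordered fields with parameters, using only the semialgebraic description of $A$, and then invoke quantifier elimination.

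First I would unwind the definition \eqref{def:Hdist}. The quantity $\mathrm{dist}_H(A_{p_1}, A_{p_2})$ is the maximum of the two one--sided Hausdorff distances, so it is enough to handle $e(A_{p_1}, A_{p_2}) := \sup_{x \in A_{p_1}} \inf_{y \in A_{p_2}} \|x - y\|$, the other term being symmetric, and the max of two semialgebraic functions is semialgebraic. Since $A_{p_1}$ and $A_{p_2}$ are compact (by hypothesis) and nonempty (one may add this hypothesis, or note the statement is about $\mathcal K(D^n)$ where sets are nonempty), the sup and inf are attained, so $e(A_{p_1}, A_{p_2}) \le r$ is equivalent to
\[
\forall x \;\big( (p_1, x) \in A \;\Rightarrow\; \exists y \, [\, (p_2, y) \in A \;\wedge\; \|x-y\|^2 \le r^2 \,] \big),
\]
and $e(A_{p_1}, A_{p_2}) = r$ can be encoded as ``$e \le r$ and not $e \le r'$ for any $r' < r$'', or more directly as ``$e \le r$ and $\forall \epsilon > 0 \, \exists x \, [(p_1,x)\in A \wedge \forall y \,((p_2,y)\in A \Rightarrow \|x-y\|^2 > (r-\epsilon)^2)]$''. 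Either way this is a first--order formula over the reals in the free variables $(p_1, p_2, r)$, built from the formula defining the semialgebraic set $A$ together with polynomial (in)equalities in $x, y, r$. Hence the set $\{(p_1, p_2, r) : \mathrm{dist}_H(A_{p_1}, A_{p_2}) = r\}$ is semialgebraic, which is exactly the assertion that the function is semialgebraic.

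There is no real obstacle here; the only points requiring a word of care are that one must genuinely use compactness (to replace ``$\inf$'' and ``$\sup$'' by attained min/max, or equivalently to ensure the $\le$--formulation and the $=$--formulation agree — without compactness the function could fail to be well defined or the formula would need $\inf/\sup$ which are not first--order) and nonemptiness of the fibers (otherwise $\mathrm{dist}_H$ is not defined). In the intended application $A \subset P(d) \times \R^n$ with $A_p = Z(p) \cap D^n$, both hold on $P(d)$ since $D^n$ is compact and $P(d)$ is precisely the locus where the fiber is nonempty; so the lemma applies there. I would state the proof in essentially the compressed form above: exhibit the defining formula and cite Tarski--Seidenberg (e.g. \cite{BCR, BasuPollackRoy}).
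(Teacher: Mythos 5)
Your proof is correct and follows essentially the same route as the paper, which simply observes that the max/min in the definition of $\mathrm{dist}_H$ are taken over semialgebraic families of semialgebraic functions and hence the result is semialgebraic by Tarski--Seidenberg; you have merely written out the first-order formula explicitly. No issues.
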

\begin{proof}This follows immediately from the definition \eqref{def:Hdist}, since the involved functions are semialgebraic.
\end{proof}

Using the notation from \cref{lemma:distH}, we consider now the semialgebraic set
\[A:=\bigg\{(p,x)\in P(d) \times D^n\,\bigg|\,  \|p\|=1,\, p(x)=0, \bigg\},\]
so that the projection on the first factor of $A$ equals $P(d)\cap \{\|p\|=1\}$.
With this notation, note that the map $\kappa:P(d)\to \mathcal{K}$ introduced above, 
equals
\[\kappa(p)=A_p:=Z(p)\cap D^n.\]
Recall also the definition of the set $T=T(d):=P(d)\setminus \Sigma(d)$.

\begin{lemma}\label{lemma:continuity}The restriction of $\kappa$ to $T(d)$ is continuous. On the other hand, $\kappa$ is not continuous on the set $P(d)$.
\end{lemma}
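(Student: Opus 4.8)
The plan is to prove the two assertions separately, both by reducing to the quantitative machinery already established.

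\medskip

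\noindent\textbf{Continuity of $\kappa$ on $T(d)$.} The key point is that $T(d)$ is open in $P(d)$: by \cref{propo:distdiscgen}, the distance to $\Sigma(d)$ is bounded below by a positive semialgebraic function of the coefficients of $p$, and for $p\in T(d)$ this distance is strictly positive; hence a whole neighborhood of $p$ stays in $P(d)$ and away from $\Sigma(d)$. Fix $p\in T(d)$ and set $\tau:=\mathrm{dist}(p,\Sigma(d))/\|p\|>0$. For $q$ sufficiently close to $p$ (so that $\|q-p\|_{C^1}\le \tfrac{\tau\|p\|}{2}$, which is guaranteed once $\|q-p\|$ is small, using the norm equivalence $\|\cdot\|_{C^1}\le c(n,k,d)\|\cdot\|$ from the proof of \cref{thm:thom}) we have $q\in P(d)$ and \cref{propo:thom} applies, giving
\[
\mathrm{dist}_H(\kappa(p),\kappa(q))\le \frac{4a(n,k,d)}{\tau\|p\|}\,\|q-p\|_{C^1}\le \frac{4a(n,k,d)c(n,k,d)}{\tau\|p\|}\,\|q-p\|.
\]
As $q\to p$ the right-hand side tends to $0$, so $\kappa$ is continuous at $p$. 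Since $p\in T(d)$ was arbitrary, $\kappa|_{T(d)}$ is continuous. (This is really just a restatement of \cref{thm:thom}, so the work here is only to observe that the hypothesis of \cref{propo:thom} can be met for all $q$ near $p$.)

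\medskip

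\noindent\textbf{Discontinuity of $\kappa$ on $P(d)$.} Here I would exhibit an explicit one-parameter family. The cleanest example is $c=1$, $n$ arbitrary: take $p_t(x):=\|x\|^2+t$ for $t\in\R$. Then for $t<0$ the zero set $Z(p_t)\cap D^n$ is the sphere of radius $\sqrt{-t}$, which is nonempty for $-1\le t<0$; at $t=0$ the zero set is the single point $\{0\}$; and for small $t>0$ it is empty, so $p_t\notin P(d)$. This already shows $\kappa$ is only defined on $P(d)$, not on all of $\Pol$, but to get genuine discontinuity \emph{within} $P(d)$ one should instead work with a family that stays inside $P(d)$ and whose zero set jumps. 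Consider $p_t(x):=\|x\|^2\bigl(\|x\|^2-1\bigr)+t$ restricted near the origin, or more simply in $\R^1$ take $p_t(x)=x^2(x^2-1/2)+t$: for $t=0$ the zero set in $D^1=[-1,1]$ is $\{-1/\sqrt2,\,0,\,1/\sqrt2\}$ (the origin is an isolated point), while for arbitrarily small $t>0$ the origin disappears and the set has only two points near $\pm 1/\sqrt2$; hence $\mathrm{dist}_H(\kappa(p_0),\kappa(p_t))\ge 1/\sqrt2$ does not go to $0$ as $t\to 0^+$, while $p_t\in P(d)$ for all small $t\ge 0$. This witnesses discontinuity of $\kappa$ on $P(d)$. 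Any such ``vanishing isolated component'' example works; the point is purely that near the discriminant a small perturbation can annihilate part of $Z(p)\cap D^n$ while leaving the rest.

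\medskip

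\noindent\textbf{Main obstacle.} There is no serious obstacle: the continuity half is an immediate corollary of the already-proved \cref{propo:thom} once openness of $T(d)$ is noted, and the discontinuity half is a matter of writing down a convincing explicit example. The only mild care needed is to ensure the chosen discontinuity example really stays inside $P(d)$ (so that $\kappa$ is actually defined along the family), rather than escaping to the locus where $Z(p)\cap D^n=\emptyset$; picking a polynomial with an isolated zero in the interior plus a robust positive-dimensional or transverse zero component elsewhere settles this cleanly.
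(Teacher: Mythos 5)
Your continuity argument is the same as the paper's, which disposes of the first assertion by simply invoking \cref{thm:thom}; the only inaccuracy there is the appeal to \cref{propo:distdiscgen} for openness of $T(d)$ — that proposition gives an \emph{upper} bound on $\mathrm{dist}(p,\Sigma(d))$, not a lower one. Openness of $T(d)$ instead comes from $\Sigma(d)$ being closed together with the persistence of $Z(q)\cap D^n\neq\emptyset$ under small perturbations of a regular $p$ (which the isotopy constructed in \cref{propo:thom} itself supplies). This is cosmetic and does not affect the argument.

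For the discontinuity, your strategy — an isolated real zero annihilated by a small perturbation — is exactly the paper's, which uses $p_j(x)=(x-\tfrac12)(x^2+\tfrac1j)$: the zero set in $D^1$ is $\{\tfrac12\}$ for every $j$, while the limit polynomial vanishes also at $0$. However, your concrete example is written with the wrong sign of the perturbation. Put $f(x)=x^2(x^2-\tfrac12)=x^4-\tfrac12x^2$; then $x=0$ is a local \emph{maximum} of $f$ with $f(0)=0$, so for small $t>0$ the equation $f(x)+t=0$ has \emph{two} roots $\pm\sqrt{2t}+O(t)$ near the origin (in addition to two roots near $\pm 1/\sqrt2$), and hence $\mathrm{dist}_H(\kappa(p_0),\kappa(p_t))\to 0$ as $t\to 0^+$: the isolated zero splits rather than disappears, and no discontinuity is exhibited. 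The zero at the origin disappears for $t\to 0^-$ (then $f+t<0$ near $0$, while the two roots near $\pm1/\sqrt2$ survive inside $D^1$), and with that sign the example does give $\mathrm{dist}_H(\kappa(p_0),\kappa(p_t))\ge \tfrac{1}{\sqrt2}-o(1)$ while keeping $p_t\in P(d)$. So the idea is correct and matches the paper, but the step as written fails; flip the sign of $t$, or use the paper's family, where the spurious factor $x^2+\tfrac1j$ is manifestly positive for all finite $j$.
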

\begin{proof}The first statement follows from \cref{thm:thom}.

To see that, in general, $\kappa$ is not continuous on $P(d)$ consider, for instance the sequence of polynomials $p_j(x):=(x-\frac{1}{2})(x^2+\frac{1}{j})$. Then
\[\lim_{j\to \infty}p_j(x)=p_\infty(x):=x\left(x-\frac{1}{2}\right).\]
On the other hand,
\[\lim_{j\to \infty}(Z(p_j)\cap D^1)=\left\{\frac{1}{2}\right\}\neq Z(p_\infty)\cap D^1=\left\{0, \frac{1}{2}\right\}.\]
(It is easy to construct similar examples in several variables.
For instance, if $p\in \Sigma$ is such that there is a unique $x\in D^n$ where $p(x)=\frac{\partial p}{\partial x_1}=\cdots=\frac{\partial p}{\partial x_n}=0$ and at this point the Hessian matrix of $p$ is positive (or negative) definite, then $\kappa$ is not continuous at $p$.)
\end{proof}
In the next  proposition we will use the following elementary lemma.
\begin{lemma}\label{lemma:function}Let $f:(0, \infty)\to [0, \infty)$ be a semialgebraic function such that
 \[\lim_{r\to 0}f(r)=0.\] Then there exist $\alpha_0, \beta_0, s_0>0$ such that
\begin{enumerate}
\item  $f|_{(0, s_0)}:(0, s_0)\to (0, f(s_0))$ is invertible;
\item $f(s)\leq \alpha_0s^{\beta_0}$ for all $0<s<s_0.$
\end{enumerate}
\end{lemma}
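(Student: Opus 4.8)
The plan is to invoke the structure theory of one-variable semialgebraic functions. Since $f:(0,\infty)\to[0,\infty)$ is semialgebraic, its graph is a semialgebraic subset of $\R^2$, and by the cell decomposition theorem for semialgebraic sets there is a finite partition of $(0,\infty)$ into points and open intervals over which $f$ is continuous (indeed real-analytic, and in fact Nash). Hence there is some $s_1>0$ such that $f$ is continuous and monotone on $(0,s_1)$: continuity is immediate from cell decomposition near $0$, and monotonicity follows because $f'$ (which exists and is semialgebraic on a possibly smaller interval) has constant sign near $0$ by the same finiteness argument. The hypothesis $\lim_{r\to0}f(r)=0$ together with $f\ge 0$ then forces $f$ to be nondecreasing on $(0,s_1)$; moreover $f$ cannot be identically zero on any interval $(0,s')$ unless we simply take $f(s_0)$ to be any positive value of $f$ — but if $f\equiv 0$ near $0$ the statement is trivial (take $\alpha_0=\beta_0=1$ and any $s_0$, with the convention that we restrict to where $f>0$, or note $0\le\alpha_0 s^{\beta_0}$ holds automatically). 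So assume $f>0$ on $(0,s_1)$ after possibly shrinking; then $f|_{(0,s_0)}:(0,s_0)\to(0,f(s_0))$ is a continuous strictly increasing bijection for any $0<s_0\le s_1$, giving item (1).

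For item (2), I would use the fact that a semialgebraic function has an algebraic Puiseux expansion at $0$: by the Puiseux theorem for semialgebraic functions (a consequence of curve selection, see \cite[Chapter 2]{BCR} or \cite[Chapter 3]{BasuPollackRoy}), there exist $s_0>0$, a rational exponent $\beta_0>0$, and a constant $\alpha_1\ne 0$ such that $f(s)=\alpha_1 s^{\beta_0}+o(s^{\beta_0})$ as $s\to 0^+$; the exponent is positive precisely because $f(s)\to 0$, and $\alpha_1>0$ because $f\ge0$. Consequently $f(s)\le \alpha_0 s^{\beta_0}$ on a sufficiently small interval $(0,s_0)$ with, say, $\alpha_0=2\alpha_1$. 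Shrinking $s_0$ if necessary so that both conclusions hold simultaneously finishes the proof.

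The only mild subtlety — which I would treat as the main point rather than a real obstacle — is the degenerate case in which $f$ vanishes on a neighborhood of $0$ (or more generally is not eventually strictly positive): then the "invertibility" claim in (1) must be read on the interval where $f$ takes positive values, and (2) holds trivially. I would dispatch this at the start with a one-line remark. The genuine content is entirely the appeal to semialgebraic cell decomposition and the Puiseux expansion at $0$, both standard; no computation is needed.
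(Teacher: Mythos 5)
Your argument is correct and follows essentially the same route as the paper, which likewise deduces both claims from the monotonicity/cell-decomposition theorem for one-variable semialgebraic functions together with the fact that semialgebraic germs at $0^+$ are algebraic Puiseux series. Your explicit treatment of the degenerate case $f\equiv 0$ near $0$ (where item (1) as literally stated would fail) is a reasonable addition that the paper's one-line proof passes over silently, and it is harmless for the application since there $f(s)=\mathrm{dist}(\Sigma,\gamma(s))$ with $\gamma$ valued in the complement of $\Sigma$.
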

\begin{proof}This follows immediately from the fact on some $(0, r_1)$ the function $f$ must be $C^1$ (\cite[Chapter 7, Theorem 3.2]{Vandendries}) and that germs of semialgebraic continuous functions on a positive interval are algebraic Puiseux series (\cite[Theorem 3.14]{BasuPollackRoy}).
\end{proof}
\begin{proposition}\label{propo:universal}The function $h:(0, \infty)\to \R$ defined by
\[h(\delta):=\sup_{C\in\overline{\kappa( T)}^{\mathrm{H}}}\left(\inf_{p\in T,\,\mathrm{dist}(p, \Sigma)\geq \delta\|p\|} \mathrm{dist}_{H}(C, \kappa(p))\right),\]
where the supremum is taken over the closure in the Hausdorff topology of  $\kappa(T)$, is semialgebraic. Moreover,
\[\label{eq:limitdelta}\lim_{\delta\to 0}h(\delta)=0.\]
\end{proposition}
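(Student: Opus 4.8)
The plan is to establish the two assertions separately: semialgebraicity of $h$, and the limit $\lim_{\delta\to 0}h(\delta)=0$.

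\emph{Semialgebraicity.} First I would observe that $h$ can be written using only semialgebraic data. By \cref{lemma:distH} applied to the set $A=\{(p,x)\in P(2d)\times D^n : \|p\|=1,\ p(x)=0\}$ (and its analogue in degree $d$), the function $(p_1,p_2)\mapsto \mathrm{dist}_H(\kappa(p_1),\kappa(p_2))$ is semialgebraic. The set $\{(p,\delta)\in T(d)\times(0,\infty) : \mathrm{dist}(p,\Sigma)\ge \delta\|p\|\}$ is semialgebraic, since $\mathrm{dist}(p,\Sigma)$ is semialgebraic (it is the distance to a semialgebraic set). The main subtlety is the outer supremum over $C\in\overline{\kappa(T)}^{\mathrm H}$: a priori this is a supremum over a subset of an infinite-dimensional space. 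Here I would invoke the definability of Hausdorff limits in semialgebraic families, the result of \cite{definability} cited in the statement of \cref{thm:tauintro}: the Hausdorff closure $\overline{\kappa(T)}^{\mathrm H}$ is itself realized as $\{\kappa'(q) : q\in Q\}$ for some semialgebraic family $\kappa':Q\to\mathcal K(D^n)$ with $Q$ semialgebraic (concretely, one enlarges to degree $2d$ and uses $\kappa(\Sigma(d))\subseteq\overline{\kappa(T(2d))}^{\mathrm H}$). Then $h(\delta)=\sup_{q\in Q}\inf_{(p,\delta)\in\ldots}\mathrm{dist}_H(\kappa'(q),\kappa(p))$ is obtained from a semialgebraic function of $(q,p,\delta)$ by one projection ($\inf$ over $p$) and one more ($\sup$ over $q$); by the Tarski--Seidenberg theorem the graph of $h$ is semialgebraic, hence $h$ is a semialgebraic function of $\delta$.

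\emph{The limit.} For the vanishing $\lim_{\delta\to 0}h(\delta)=0$, fix $\eta>0$; I must show $h(\delta)<\eta$ for $\delta$ small. Take any $C\in\overline{\kappa(T)}^{\mathrm H}$. By definition of the Hausdorff closure there is $p\in T$ with $\mathrm{dist}_H(C,\kappa(p))<\eta/2$. Now $p\in T$ means $\mathrm{dist}(p,\Sigma)>0$, so setting $\delta_p:=\mathrm{dist}(p,\Sigma)/\|p\|>0$ we have that $p$ is admissible in the infimum defining $h(\delta)$ for every $\delta\le\delta_p$, giving $h(\delta)<\eta/2$ pointwise in $C$ — but the issue is uniformity of $\delta_p$ over the (non-compact) family of all relevant $C$. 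This is precisely the point where semialgebraicity rescues the argument: having already shown $h$ is semialgebraic, and noting $h\ge 0$, it suffices to rule out $\lim_{\delta\to 0}h(\delta)=\ell>0$ (the limit exists since a semialgebraic function is monotone near $0$). If $h$ were bounded below by $\ell>0$ on some $(0,\delta_1)$, then for every such $\delta$ there is $C_\delta\in\overline{\kappa(T)}^{\mathrm H}$ with $\inf_{p:\,\mathrm{dist}(p,\Sigma)\ge\delta\|p\|}\mathrm{dist}_H(C_\delta,\kappa(p))\ge\ell/2$; taking a Hausdorff-convergent subsequence $C_{\delta_j}\to C_\infty$ (possible by compactness of $\mathcal K(D^n)$), and noting $\overline{\kappa(T)}^{\mathrm H}$ is closed, we get $C_\infty\in\overline{\kappa(T)}^{\mathrm H}$ with $\mathrm{dist}_H(C_\infty,\kappa(p))\ge\ell/4$ for all $p\in T$ with $\mathrm{dist}(p,\Sigma)>0$ — i.e.\ for \emph{all} $p\in T$ — contradicting the definition of $\overline{\kappa(T)}^{\mathrm H}$ as the closure of $\kappa(T)$.

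\emph{Main obstacle.} The crux is the semialgebraic description of the outer supremum over $\overline{\kappa(T)}^{\mathrm H}$: one cannot literally quantify over compact sets, so the whole argument hinges on representing the Hausdorff closure of a semialgebraic family as (the image under a semialgebraic map of) a semialgebraic set of parameters, which is exactly the content of the definability-of-Hausdorff-limits result of \cite{definability} invoked here. Once that is in place, semialgebraicity of $h$ is a routine application of Tarski--Seidenberg to the formula already assembled, and the limit $\lim_{\delta\to 0}h(\delta)=0$ follows by combining a compactness argument in $\mathcal K(D^n)$ with the definition of the Hausdorff closure, the semialgebraicity of $h$ guaranteeing that the pointwise statement upgrades to the limit.
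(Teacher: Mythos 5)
Your argument is correct, but it diverges from the paper's proof in both halves. For semialgebraicity, the paper simply observes that the inner function $C\mapsto\inf_{p}\operatorname{dist}_H(C,\kappa(p))$ is $1$--Lipschitz in $C$ for the Hausdorff metric, so its supremum over $\overline{\kappa(T)}^{\mathrm H}$ equals its supremum over $\kappa(T)$ itself; after normalizing to $\|q\|=1$ this turns $h$ into a sup--inf of the semialgebraic function from \cref{lemma:distH} over finite--dimensional semialgebraic sets, and Tarski--Seidenberg finishes. You instead invoke the definability--of--Hausdorff--limits theorem of \cite{definability} to parametrize the closure semialgebraically; this works, but is a much heavier tool than needed, and your parenthetical ``concretely'' via $\kappa(\Sigma(d))\subseteq\overline{\kappa(T(2d))}^{\mathrm H}$ is not the right mechanism (that inclusion from \cite{BasuLerario} does not parametrize the Hausdorff closure of $\kappa(T)$). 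For the limit, the roles are reversed: the paper extracts a Hausdorff--convergent subsequence $\kappa(q_k)\to\overline{C}$ and then uses the curve--selection statement of \cite{definability} together with the Puiseux--type \cref{lemma:function} to produce, along a semialgebraic arc tending to $\overline{C}$, polynomials at exactly distance $\delta_k$ from $\Sigma$, contradicting the lower bound. Your argument is more elementary: the near--extremizers $C_{\delta_j}$ converge in the compact space $\mathcal{K}(D^n)$ to some $C_\infty$ lying in the closed set $\overline{\kappa(T)}^{\mathrm H}$, and passing to the limit in the constraint (every fixed $p\in T$ has $\operatorname{dist}(p,\Sigma)>0$, hence is eventually admissible) shows $C_\infty$ is uniformly $\ell/4$--far from all of $\kappa(T)$, which is incompatible with membership in the closure. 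This sidesteps the o--minimal machinery entirely for this step and is, if anything, cleaner than the paper's route; combined with the Lipschitz observation above, it would let you dispense with \cite{definability} altogether in this proposition.
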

\begin{proof}Observe first that, since the supremum is taken over the closure of $\kappa(T)$, we can alternatively write the function $h$ as
\begin{align}h(\delta)&=\sup_{C\in\kappa( T)}\left(\inf_{p\in T,\,\mathrm{dist}(p, \Sigma)\geq \delta\|p\|} \mathrm{dist}_{H}(C, \kappa(p))\right)\\
&=\sup_{q\in T}\left(\inf_{p\in T,\,\mathrm{dist}(p, \Sigma)\geq \delta\|p\|} \mathrm{dist}_{H}(\kappa(q), \kappa(p))\right).\end{align}

Notice that, because $\kappa$ is scale--invariant, i.e. $\kappa(\lambda p)=\kappa(p)$ for every $\lambda\neq 0$, we may restrict to work with the set 
\[\tilde{T}:=T\cap \{\|p\|=1\},\]
so that
\[h(\delta)=\sup_{q\in \tilde{T}}\left(\inf_{p\in \tilde{T},\,\mathrm{dist}(p, \Sigma)\geq \delta} \mathrm{dist}_{H}(\kappa(q), \kappa(p))\right).\]

Now, the function $(p,q)\mapsto \mathrm{dist}_{H}(\kappa(p), \kappa(q))$ is semialgebraic by \cref{lemma:distH}, therefore so is $h$. In particular, the limit $L:=\lim_{\delta\to 0}h(\delta)$
exists.  

Assume by contradiction that $L>0$. Then there exist sequences $\{\delta_k\}_{k\in \N}\subset (0, \infty)$ and  $\{q_k\}_{k\in \N}\subset T$ satisfying  $\lim_{k\to \infty}\delta_k\to 0$ and 
\[\label{eq:L}\mathrm{dist}(p, \Sigma)\geq \delta_k\implies \mathrm{dist}_{H}(\kappa(q_k), \kappa(p))\geq \frac{L}{2}.\]

Since $\mathcal{K}$ is compact, the sequence $\{\kappa(q_k)\}_{k\in \N}$ (up to subsequences) converges to some $\overline{C}\in \mathcal{K}$. Therefore, for every $k\in \N$ we must have 
\[\label{eq:contr}\mathrm{dist}(p, \Sigma)\geq \delta_k\implies \mathrm{dist}_H(\overline{C}, \kappa(p))\geq \frac{L}{4}.\] Otherwise, by triangle inequality, 
\[\lim_{k\to \infty}\mathrm{dist}_H(\kappa(q_k), \kappa(p))\leq \lim_{k\to \infty}\left(\mathrm{dist}_H(\kappa(q_k), \overline{C})+\mathrm{dist}_H(\overline{C}, \kappa(p))\right)\leq \frac{L}{4},\]
which goes against \eqref{eq:L}.

By \cite[Corollary 2]{definability}, there exists a semialgebraic arc $\gamma:(0,1)\to T$ such that 
\[\lim_{s\to 0}\mathrm{dist}_{H}(\overline{C}, \kappa(\gamma(s)))=0.\]
Since $\gamma$ is semialgebraic and bounded, the limit $\lim_{s\to0}\gamma(s)$ exists. This limit cannot be in $T$, otherwise, since $\kappa $ is continuous on $T$ (by \cref{lemma:continuity}), we would have $\overline{C}=\kappa(\overline{p})$ for some $\overline{p}\in T$. Such $\overline{p}$ lies definitely in one of the sets $\{\mathrm{dist}(\Sigma, \cdot)\geq \delta_k\}$, since $\delta_k\to 0$, which goes against \eqref{eq:contr}.

 Therefore the limit of the arc $\gamma$ is in $\Sigma$, which implies 
\[\lim_{s\to 0}\mathrm{dist}(\Sigma, \gamma(s))=0.\]
The function $f(s):=\mathrm{dist}(\Sigma, \gamma(s))$ is semialgebraic, therefore it is invertible when restricted to some interval $(0, s_0)$ by \cref{lemma:function}. In particular for every $k\in \N$ such that $\delta_k\leq f(s_0)$ there exists $r_k\in (0, s_0)$ such that $\mathrm{dist}(\Sigma, \gamma(r_k))= \delta_k.$ This goes now against \eqref{eq:contr} and gives the desired contradiction.
\end{proof}

The last key ingredient for the proof of \cref{thm:tau} is the following result from \cite{BasuLerario}.
\begin{theorem}[{\cite[Theorem 2.10]{BasuLerario}}]\label{thm:BasuLerario1}Let $Z\subset \R^n$ be an algebraic set defined by polynomials of degree at most $d$, of dimension $k=n-c$ and such that $Z\cap D^n\neq \emptyset$. Then, there exists a one parameter family of smooth complete intersections $\{Z_t\}_{t>0}$ defined by $c$ polynomials of degree at most $2d$ and such that
\[\lim_{t\to 0}\mathrm{dist}_H(Z_t\cap D^n, Z\cap D^n)=0.\]
\end{theorem}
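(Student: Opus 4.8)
The plan is to follow the construction of \cite{BasuLerario}. First I would pass to a convenient presentation of $Z$: set $g:=p_1^2+\cdots+p_s^2$, a nonnegative polynomial of degree at most $2d$ with $Z=Z(g)$, and, more generally, fix nonnegative polynomials $h_1,\dots,h_c$ of degree at most $2d$ with $\bigcap_{i=1}^c Z(h_i)=Z$ — for instance by splitting the generators $p_1,\dots,p_s$ into $c$ nonempty groups (repeating some of them if $s<c$) and letting $h_i$ be the sum of the squares of the generators in the $i$-th group. Write $H=(h_1,\dots,h_c)\colon\R^n\to[0,\infty)^c$, so that $Z=H^{-1}(0)$ and $Z\cap D^n$ is compact. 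A first elementary remark: since $H$ is continuous and $D^n$ is compact, the compact sets $H^{-1}([0,\tau_1]\times\cdots\times[0,\tau_c])\cap D^n$ decrease to $Z\cap D^n$ as $\tau\to 0$, so their Hausdorff distance to $Z\cap D^n$ tends to $0$; since $Z_\tau:=H^{-1}(\tau)\cap D^n$ is contained in this set, this already controls the ``$Z_\tau$ is close to $Z$'' half of the convergence.

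Next I would produce the smooth approximants via Sard's theorem. Applied to $H$ on the interior of $D^n$ and to $H|_{\partial D^n}$, Sard gives that almost every $\tau\in(0,\infty)^c$ is a common regular value; for such $\tau$ the set $Z_\tau$ is a smooth $(n-c)$-dimensional manifold with (possibly empty) smooth boundary on $\partial D^n$, i.e. a smooth complete intersection cut out by the $c$ polynomials $h_1-\tau_1,\dots,h_c-\tau_c$, each of degree at most $2d$. The theorem then follows once we produce a semialgebraic arc $\gamma\colon(0,1)\to(0,\infty)^c$ with $\gamma(t)\to 0$, consisting of common regular values, and such that $\mathrm{dist}_H(Z_{\gamma(t)},Z\cap D^n)\to 0$; the family $Z_t:=Z_{\gamma(t)}$ is the desired one.

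For the Hausdorff convergence one inclusion is essentially free: the first remark shows $\sup_{x\in Z_\tau}\mathrm{dist}(x,Z\cap D^n)\to 0$ as $\tau\to 0$ (quantitatively, a \L{}ojasiewicz inequality gives $\mathrm{dist}(x,Z\cap D^n)\le C\lVert H(x)\rVert^{1/N}$ near $Z$, and $\lVert H(x)\rVert=\lVert\tau\rVert$ on $Z_\tau$). In the hypersurface case $c=1$ the reverse inclusion is also elementary: the map $z\mapsto\max_{\overline B(z,\epsilon)\cap D^n}g$ is continuous and strictly positive on the compact set $Z\cap D^n$ (a nonzero polynomial cannot vanish on a ball), so $m(\epsilon):=\min_{z\in Z\cap D^n}\max_{\overline B(z,\epsilon)\cap D^n}g>0$; for any regular $t\in(0,m(\epsilon))$ the intermediate value theorem applied along a segment inside $\overline B(z,\epsilon)\cap D^n$ shows every $z\in Z\cap D^n$ lies within $\epsilon$ of $\{g=t\}\cap D^n$, and letting $t\to 0$ through regular values gives $\mathrm{dist}_H(Z_t,Z\cap D^n)\to 0$.

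The main obstacle is the reverse inclusion when $c\ge 2$: then $H(D^n)$ need not contain a full neighbourhood of $0$ in $(0,\infty)^c$, so $\tau$ cannot roam freely near $0$ and one must identify a good direction of approach. I would proceed as follows. Each $h_i\not\equiv 0$, so $\bigcup_i Z(h_i)$ is a proper algebraic set with dense complement; hence every $z\in Z\cap D^n$ is a limit of points of $D^n$ at which all $h_i$ are positive and (by continuity) small, i.e. $0$ lies in the closure of $H(\overline B(z,\epsilon)\cap D^n)\cap(0,\infty)^c$ for every $z$ and $\epsilon$. The crux is to upgrade this pointwise statement, uniformly over the compact set $Z\cap D^n$, to the assertion that the semialgebraic set $\{\tau\in(0,\infty)^c:\ Z_\tau\ne\emptyset,\ \mathrm{dist}_H(Z_\tau,Z\cap D^n)<\epsilon\}$ accumulates at $0$ for every $\epsilon>0$ — this is where the bulk of the work lies, and where a careful choice of the $h_i$ (so that the images $H(\overline B(z,\epsilon)\cap D^n)$ do not, uniformly in $z$, collapse onto a lower-dimensional face of $[0,\infty)^c$) is needed. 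Granting it, the curve selection lemma furnishes a semialgebraic arc $\gamma$ through values of this set ending at $0$ and lying among regular values, and $Z_t:=H^{-1}(\gamma(t))\cap D^n$ is the required family; the definability of Hausdorff limits in semialgebraic families (as in \cite{definability}) is the natural tool for this selection step.
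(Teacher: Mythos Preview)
This theorem is quoted from \cite{BasuLerario} and the present paper does not give its own proof; however, the proof of the strengthened version (\cref{thm:BasuLerario}) reveals the mechanism used there: one works with a \emph{single} nonnegative polynomial $G\in\R[x]_{2d}$, chosen generically so that for every $0\le k\le n$ the set $\mathrm{Cr}^h_k(G)$ is a smooth complete intersection in $\C\mathrm{P}^n$, and the approximating family $\{Z_t\}$ is extracted from these critical loci. The extra $c-1$ equations needed to cut the dimension down to $k$ come from critical--point (polar--variety) conditions attached to $G$, not from additional independent sums of squares.

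Your route is genuinely different, and for $c=1$ it is correct and complete. For $c\ge2$ the gap you yourself flag is real and, within your setup, not easily closable. Partitioning (or repeating) the generators into $c$ groups and setting $h_i=\sum_{j\in S_i}p_j^2$ can force the image $H(D^n)$ into a lower--dimensional subset of $[0,\infty)^c$---trivially when two groups coincide, but also whenever the resulting $h_i$ are algebraically dependent near $Z$---so that $H$ has no regular values near $0$ at all. More structurally: since each $h_i\ge0$ attains its minimum on $Z$, one has $\nabla h_i\equiv0$ along $Z$, so whether $DH$ has rank $c$ near $Z$ is governed entirely by second--order data of the $h_i$; arranging that Hessian data to be nondegenerate in $c$ independent normal directions, uniformly over a possibly singular $Z$ and within the degree bound $2d$, is exactly what a naive splitting of generators does not deliver. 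Your phrase ``a careful choice of the $h_i$'' hides the whole difficulty, and the critical--point construction of \cite{BasuLerario} is precisely the device that resolves it.
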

We will actually need the following stronger version of the previous result, stating that the one parameter family of smooth complete intersections $\{Z_t\}_{t>0}$ from \cref{thm:BasuLerario1} can be chosen so that, for all $t>0$ small enough, the intersection $Z_t\cap \partial D^n$ is transversal (\cref{thm:BasuLerario1} only guarantees that $Z_t$ is a smooth complete intersection, without mentioning its behavior on the boundary of the disk). 

\begin{theorem}\label{thm:BasuLerario}For every $p\in P(d)$ with $\dim(Z(p)\cap D^n)=k$ there exists a one parameter family $p_\epsilon \in P(2d)\setminus \Sigma(2d)$ such that
\[\lim_{\epsilon\to 0}\mathrm{dist}_H(Z(p_\epsilon)\cap D^n, Z(p)\cap D^n)=0.\]
\end{theorem}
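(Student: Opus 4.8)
The plan is to deduce the statement from \cref{thm:BasuLerario1}. Applied to $Z(p)$, that result produces a one--parameter family of smooth complete intersections $Z_t=Z(q_t)$, with $q_t$ a $c$--tuple of polynomials of degree at most $2d$, with $Z_t$ nonsingular on all of $\R^n$ (so in particular $q_t\notin\Sigma_D(2d)$), and with $\mathrm{dist}_H(Z_t\cap D^n,Z(p)\cap D^n)\to 0$. The single property that is missing is transversality of $Z_t$ with $\partial D^n$: a priori $q_t\in\Sigma_{\partial D}(2d)$. So it suffices to prove: given a smooth complete intersection $Z_t=Z(q_t)$ as above with $Z_t\cap D^n\ne\emptyset$, and given $\sigma>0$, one can find $p\in T(2d)=P(2d)\setminus\Sigma(2d)$ with $\mathrm{dist}_H(Z(p)\cap D^n,Z_t\cap D^n)<\sigma$. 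Indeed, choosing $t=t(\epsilon)$ so small that $\mathrm{dist}_H(Z_t\cap D^n,Z(p)\cap D^n)<\epsilon/2$ and then applying this with $\sigma=\epsilon/2$ yields the desired family $p_\epsilon$.

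\textbf{Reaching $T(2d)$ by a small perturbation.}
I would take $p=q_t+\eta$ with $\eta$ a $c$--tuple of polynomials of degree at most $2d$ and $\|\eta\|$ small. First, a genericity step: the evaluation maps $(x,\eta)\mapsto(q_t+\eta)(x)$ defined on $\R^n\times\{\eta\}$ and on $\partial D^n\times\{\eta\}$, with values in $\R^c$, are submersions (constant, resp.\ affine, perturbations already realize any prescribed value, resp.\ value and tangential derivative, at a given point), so $0$ is a regular value of each; by Thom's parametric transversality theorem, for almost every $\eta$ in a small ball around the origin the polynomial $q_t+\eta$ is regular both on $\R^n$ and on $\partial D^n$, i.e.\ $q_t+\eta\notin\Sigma_D(2d)\cup\Sigma_{\partial D}(2d)$. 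It then remains to choose such an $\eta$, with $\|\eta\|$ small, so that moreover $Z(q_t+\eta)\cap D^n\ne\emptyset$ (hence $q_t+\eta\in P(2d)$, so $q_t+\eta\in T(2d)$) and $\mathrm{dist}_H(Z(q_t+\eta)\cap D^n,Z_t\cap D^n)<\sigma$.

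\textbf{Controlling the Hausdorff distance: the main difficulty.}
This is the heart of the proof, and is where the discontinuity of $\kappa$ at the boundary discriminant must be faced. One of the two inequalities is essentially free, by upper semicontinuity of $\kappa$: if $\eta_j\to 0$ and $y_j\in Z(q_t+\eta_j)\cap D^n$ with $y_j\to y$, then $q_t(y)=0$ and $\|y\|\le 1$, so $y\in Z_t\cap D^n$; a quantitative version bounds $\sup_{y\in Z(q_t+\eta)\cap D^n}\mathrm{dist}(y,Z_t\cap D^n)$ by a quantity tending to $0$ with $\eta$, using that $q_t$ is well--conditioned on a neighborhood of the compact set $Z_t\cap D^n$ and that, $Z_t$ being algebraic (hence with finitely many connected components, and with $Z_t\cap\{\|x\|\ge 1\}$ semialgebraic, hence again with finitely many components), points of $Z_t$ lying just outside $D^n$ but away from $Z_t\cap\partial D^n$ actually sit at a definite positive distance from $\partial D^n$. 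The reverse inequality together with nonemptiness is the delicate point: for $x\in Z_t$ with $\|x\|<1$ the implicit function theorem supplies a zero of $q_t+\eta$ within $O(\|\eta\|)$ of $x$ still lying in the open disk, so interior points are never lost; the danger is at points of the tangential locus $Z_t\cap\partial D^n$ where $Z_t$ meets $\partial D^n$ from outside, which are isolated in $Z_t\cap D^n$ and can disappear if the perturbation nudges the zero set outward. I would resolve this by choosing the \emph{direction} of $\eta$ so that $Z(q_t+\eta)$ is pushed \emph{into} $D^n$ all along the tangential locus (model case $q_t=\|x\|^2-1$: the perturbation $\eta\equiv\delta>0$ replaces $\partial D^n$ by the interior sphere of radius $\sqrt{1-\delta}$), which amounts to checking that the open cone of such ``inward'' directions in the space of degree $\le 2d$ polynomials is nonempty, via an interpolation/convexity argument against the finitely many strata of the semialgebraic set $Z_t\cap\partial D^n$. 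I expect this last step — quantifying how far the zero set may be pushed inward along a possibly positive--dimensional, stratified tangential locus while keeping $\|\eta\|$ small and creating no spurious pieces inside $D^n$ — to be the main obstacle.
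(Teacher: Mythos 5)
Your reduction to a perturbation statement is reasonable, and you correctly isolate the true difficulty: the only obstruction to making $q_t$ transversal to $\partial D^n$ by a small perturbation, while keeping $Z(q_t+\eta)\cap D^n$ nonempty and Hausdorff--close to $Z(q_t)\cap D^n$, sits on the tangential locus of $Z(q_t)\cap\partial D^n$, where pieces of the zero set meeting the sphere from outside can be isolated in $Z(q_t)\cap D^n$ and silently deleted. But the step you propose to handle this --- choosing the direction of $\eta$ so that the zero set is pushed \emph{into} $D^n$ all along that locus --- is exactly where the proof is missing, and you say so yourself. The gap is real: the tangential locus is a semialgebraic subset of $Z(q_t)\cap\partial D^n$ that may be positive--dimensional with several strata, the required ``inward'' condition is a sign (for $c=1$) or, worse, an open condition on the vector $\eta(x)\in\R^c$ relative to $Dq_t(x)$ varying over that locus (for $c>1$), and you must realize it by a $c$--tuple of polynomials of degree at most $2d$ --- the \emph{same} degree budget already exhausted by $q_t$. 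Nothing in your sketch shows that the cone of inward directions in $\R[x]_{\le 2d}^c$ is nonempty for an arbitrary smooth complete intersection $q_t$ of degree $2d$; an ``interpolation/convexity argument against the finitely many strata'' would need the prescribed signs/directions to be interpolable in bounded degree, which is precisely the kind of quantitative claim that can fail when the number or dimension of strata grows. Since you use \cref{thm:BasuLerario1} as a black box, you cannot restrict attention to the particular $q_t$ it produces, so you must prove this for all of them.

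The paper avoids the problem entirely by not treating \cref{thm:BasuLerario1} as a black box: it reopens the Basu--Lerario construction, in which the approximating complete intersections are cut out by critical--point equations $\mathrm{Cr}^h_k(G)$ of an auxiliary nonnegative polynomial $G\in\R[x]_{2d}$, and observes that transversality of $\mathrm{Cr}^h_k(G)$ to the quadric $z_1^2+\cdots+z_n^2-z_0^2$ (the homogenized boundary sphere) is one more open dense condition on $G$, compatible with the nonnegativity cone and with the smoothness condition already imposed there. Genericity at the level of $G$ delivers the boundary transversality for free for all small $t$, with no a posteriori perturbation and no degree bookkeeping. If you want to salvage your route, you would need either to prove the inward--perturbation lemma in full generality (with the fixed degree bound) or to extract from the proof of \cref{thm:BasuLerario1} enough structure on $q_t$ to make it tractable --- at which point you are essentially doing what the paper does.
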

\begin{proof}The proof goes exactly as the proof of \cref{thm:BasuLerario1}, with the following modification. Using the same notation from that proof, 
the polynomial 
$G\in \R[x]_{2d}$, which in \cref{thm:BasuLerario1} is chosen such that such that $G\geq 0$
and such that for every $0 \leq k \leq n$ the set $\mathrm{Cr}^h_k(G)$ defines a smooth complete intersection in $\C\mathrm{P}^n$, is chosen so that the additional condition that the zero sets of $\mathrm{Cr}^h_k(G)$ and $z_1^2+\cdots + z_n^2-z_0^2$  are transversal is satisfied. This is possible by genericity of both choices. More precisely: (1) the set of polynomials $G\in \R[x]_{2d}$ such that $G\geq 0$ is a full--dimensional cone $C_{2d}$ in $\R[x]_{2d}$; (2) the set $U_1$ of polynomials $G\in \R[x]_{2d}$ such that $\mathrm{Cr}^h_k(G)$ defines a smooth complete intersection in $\C\mathrm{P}^n$ contains an open and dense semialgebraic set in $\R[x]_{2d}$; (3) the set $U_2$ of polynomials $G\in \R[x]_{2d}$ such that  $\mathrm{Cr}^h_k(G)$ and $z_1^2+\cdots + z_n^2-z_0^2$  are transversal contains an open and dense semialgebraic set in $\R[x]_{2d}$. Therefore, $C_{2d}\cap U_1\cap U_2$ is nonempty and we choose an element  $G$ in it. With this choice, the one parameter family of complete intersections from \cref{thm:BasuLerario1} is also transversal to $\partial D=Z(z_1^2+\cdots + z_n^2-z_0^2)\cap \R^n$.\end{proof}

We are ready now for the proof of \cref{thm:tau}. 
\begin{proof}[Proof of \cref{thm:tau}]Consider the function $h$ from \cref{propo:universal} for the case $T=T(2d)\subset P(2d).$ Since $h$ is semialgebraic and \eqref{eq:limitdelta} holds, by \cref{lemma:function} there exist $\delta_0, \alpha_0, \beta_0>0$ such that
for all $0<\delta<\delta_0$ one has $h(\delta)\leq \alpha_0\delta^{\beta_0}$. Set 
\[\epsilon_0:=\alpha_0\delta_0^{\beta_0}, \quad \alpha:=\frac{1}{\alpha_0}, \quad \beta:=\frac{1}{\beta_0}.\]
Then, for all $\epsilon<\epsilon_0$ we have $h(\alpha \epsilon^{\beta})\leq \epsilon.$

Let now $Z\subset \R^n$ be an algebraic set defined by polynomials of degree at most $d$ and of dimension $n-c$. It follows now from \cref{thm:BasuLerario} that the set $C:=Z\cap D^n$ belongs to Hausdorff closure of the image of $\kappa:T(2d)=P(2d)\setminus \Sigma(2d)\to \mathcal{K}.$ Therefore, denoting by $\tilde{T}(2d):=T(2d)\cap \{\|p\|=1\},$ by \cref{propo:universal},
\[\label{eq:infimum}\inf_{p\in \tilde{T}(2d),\,\mathrm{dist}(p, \Sigma(2d))\geq \alpha \epsilon^\beta} \mathrm{dist}_{H}(C, \kappa(p))\leq h(\alpha \epsilon^\beta)\leq \epsilon.\]
Observe now that, for $\epsilon>0$, the set $\{p\in \tilde{T}(2d),\,\mathrm{dist}(p, \Sigma(2d))\geq \alpha \epsilon^\beta\}$ is compact in $P(2d)$:  the condition $\|p\|=1$ is a closed condition and implies the set is contained in the unit sphere in the space of polynomials; the condition $Z(p)\cap D^n\neq \emptyset$ is a closed condition (since $D^n$ is compact) and so is $\mathrm{dist}(p, \Sigma)\geq \alpha \epsilon^\beta$. Therefore the infimum in \eqref{eq:infimum} is actually attained at some $p_\epsilon$.
\end{proof}

\subsection{A quantitative precompactness result}For a compact set $Z\subset D^n$ and $\epsilon>0$, we denote by $B_{H}(Z, \epsilon)\subset \mathcal{K}(D^n)$ the $\epsilon$--Hausdorff ball centered at $Z$, i.e. set of compact subsets of $D^n$ with Hausdorff distance at most $\epsilon$ from $Z$. 
The following theorem gives a quantitative precompactness result for the space $\HH(n,k,d)\subset \mathcal{K}(D^n)$ in the Hausdorff topology. More precisely, since $\mathcal{K}(D^n)$ is compact, so is the closure of $\HH(n,k,d)$ in it and, since $\mathcal{K}(D^n)$ is a metric space, we know that $\HH(n,k,d)$ is totally bounded. Here, for every $\epsilon>0$, we give a bound on the number of $\epsilon$--balls needed to cover it.

\begin{theorem}\label{coro:net}For every $n,k,d$ there exists $\epsilon_0>0$ and $a_1, a_2>0$ such that for every $0<\epsilon<\epsilon_0$ there are polynomials $q_1, \ldots, q_{\nu(\epsilon)}\in P(2d)\setminus \Sigma(2d)$ such that 
 \[\HH(n,k,d)\subset \bigcup_{i=1}^{\nu(\epsilon)}B_H(Z(q_i)\cap D^n, \epsilon)\quad \textrm{and}\quad \nu(\epsilon)\leq a_1\epsilon^{-a_2}.\] 
\end{theorem}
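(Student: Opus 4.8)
The plan is to assemble three ingredients: the approximation statement \cref{thm:tau}, the quantitative continuity of $\kappa$ away from the discriminant (\cref{thm:thom}), and a volume--packing estimate for covering numbers of a bounded set in a finite--dimensional normed space. Put $c:=n-k$ and let $N=N(n,k,d):=c\binom{n+2d}{2d}$ be the dimension of the space of $c$--tuples of polynomials of degree at most $2d$, equipped with the Bombieri--Weyl norm $\|\cdot\|$; let $\alpha,\beta,\epsilon_0>0$ be the constants of \cref{thm:tau} (which produces tuples in $P(2d)$). For $0<\epsilon<\epsilon_0$ set
\[
\tau_\epsilon:=\alpha\bigl(\tfrac{\epsilon}{2}\bigr)^{\beta},\qquad
\mathcal{A}_\epsilon:=\bigl\{\,p\in T(2d)\ \bigm|\ \|p\|=1,\ \dist\bigl(p,\Sigma(2d)\bigr)\geq\tau_\epsilon\,\bigr\}.
\]
First I would apply \cref{thm:tau} with tolerance $\epsilon/2$, using the scale--invariance remark following its statement to normalize $\|p_\epsilon\|=1$: for every $Z\in\HH(n,k,d)$ this produces a $p\in\mathcal{A}_\epsilon$ with $\dist_H(Z,\kappa(p))\leq\epsilon/2$. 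So the problem reduces to covering $\kappa(\mathcal{A}_\epsilon)$ by at most $a_1\epsilon^{-a_2}$ Hausdorff balls of radius $\epsilon/2$ whose centers are sets of the form $\kappa(q)$ with $q\in P(2d)\setminus\Sigma(2d)$; a triangle inequality then finishes the job.

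The second step is to transport a covering of $\kappa(\mathcal{A}_\epsilon)$ back to a $\|\cdot\|$--covering of $\mathcal{A}_\epsilon$. Let $L=L(n,k,d)$ be the constant of \cref{thm:thom}, and recall from its proof that the unspecified ``$\|p-q\|$ small enough'' there may be taken to be $\|p-q\|\leq\dist(p,\Sigma(2d))/C'$ for a norm--equivalence constant $C'=C'(n,k,d)$ between $\|\cdot\|_{C^1}$ and $\|\cdot\|$. Then for $p\in\mathcal{A}_\epsilon$ and any polynomial tuple $q$ with $\|q-p\|\leq\tau_\epsilon/C'$, \cref{thm:thom} gives $\dist_H(\kappa(p),\kappa(q))\leq\frac{L}{\dist(p,\Sigma(2d))}\|q-p\|\leq\frac{L}{\tau_\epsilon}\|q-p\|$. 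Choosing the net radius
\[
r_\epsilon:=\min\Bigl\{\tfrac{\epsilon\tau_\epsilon}{2L},\ \tfrac{\tau_\epsilon}{C'}\Bigr\}
\]
— which equals $\tfrac{\epsilon\tau_\epsilon}{2L}=C(n,k,d)\,\epsilon^{1+\beta}$ once $\epsilon_0$ is shrunk so that $\epsilon<\epsilon_0$ forces $\epsilon< 2L/C'$ — we obtain that any polynomial tuple lying within $\|\cdot\|$--distance $r_\epsilon$ of some $p\in\mathcal{A}_\epsilon$ has Hausdorff image within $\epsilon/2$ of $\kappa(p)$.

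Finally I would count. The set $\mathcal{A}_\epsilon$ is compact — its defining conditions $\|p\|=1$, $Z(p)\cap D^n\neq\emptyset$ and $\dist(p,\Sigma(2d))\geq\tau_\epsilon$ are closed, which is precisely the compactness argument at the end of the proof of \cref{thm:tau} — and it sits in the unit sphere of an $N$--dimensional normed space. Hence a maximal $r_\epsilon$--separated subset $\{q_1,\dots,q_{\nu(\epsilon)}\}\subset\mathcal{A}_\epsilon$ is an $r_\epsilon$--net, and comparing the volumes of the disjoint $(r_\epsilon/2)$--balls around the $q_i$ with the $(1+r_\epsilon/2)$--ball gives, for $\epsilon$ small,
\[
\nu(\epsilon)\ \leq\ \Bigl(1+\tfrac{2}{r_\epsilon}\Bigr)^{\!N}\ \leq\ \Bigl(\tfrac{3}{r_\epsilon}\Bigr)^{\!N}\ =\ a_1\,\epsilon^{-a_2},\qquad a_2:=(1+\beta)N,\quad a_1:=(3/C)^N,
\]
both depending only on $n,k,d$. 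Each $q_i$ lies in $\mathcal{A}_\epsilon\subset P(2d)\setminus\Sigma(2d)$, as the statement demands. For any $Z\in\HH(n,k,d)$, taking $p\in\mathcal{A}_\epsilon$ with $\dist_H(Z,\kappa(p))\leq\epsilon/2$ (Step 1) and $q_i$ with $\|q_i-p\|\leq r_\epsilon$ (the net property), Step 2 gives $\dist_H(\kappa(p),\kappa(q_i))\leq\epsilon/2$, hence $\dist_H(Z,\kappa(q_i))\leq\epsilon$, which is the inclusion $\HH(n,k,d)\subset\bigcup_i B_H(\kappa(q_i),\epsilon)$.

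I expect the only genuinely delicate point to be the tension between the exploding Lipschitz constant $L/\tau_\epsilon$ of $\kappa$ near $\mathcal{A}_\epsilon$ and the smallness requirement built into \cref{thm:thom}: as $\epsilon\to0$ the distance--to--discriminant threshold $\tau_\epsilon$ tends to $0$, so $\kappa$ is only mildly continuous there, yet the net radius is forced down to order $\epsilon\tau_\epsilon$, and it is precisely this scaling that makes both $(L/\tau_\epsilon)r_\epsilon\leq\epsilon/2$ and $r_\epsilon\leq\tau_\epsilon/C'$ hold after a final shrinking of $\epsilon_0$. One must also remember to take the net centers inside $\mathcal{A}_\epsilon$ rather than as arbitrary points of the ambient polynomial space, since the theorem insists on $q_i\in P(2d)\setminus\Sigma(2d)$. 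Everything else is routine; the exponent comes out as $a_2=(1+\beta)N$ with $N=c\binom{n+2d}{2d}$ and $\beta$ the \L{}ojasiewicz--type exponent furnished by the semialgebraic input (via \cite{definability}) to \cref{thm:tau}, and one could in principle optimize $\beta$ and this dimension count to sharpen $a_2$.
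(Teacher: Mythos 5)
Your proposal is correct and follows essentially the same route as the paper: approximate each $Z\in\HH(n,k,d)$ to within $\epsilon/2$ by some $p$ far from $\Sigma(2d)$ via \cref{thm:tau}, use \cref{thm:thom} to convert a net of radius $\sim\epsilon^{1+\beta}$ in polynomial space into an $\epsilon/2$--Hausdorff net of the images, and count by a standard covering/packing bound in the $N$--dimensional Bombieri--Weyl ball, yielding $\nu(\epsilon)\lesssim\epsilon^{-(1+\beta)N}$. The only (cosmetic, and arguably cleaner) difference is that you take a maximal separated set inside $\mathcal{A}_\epsilon$ itself, whereas the paper takes a net of the ambient unit ball and discards the centers too close to the discriminant.
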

\begin{proof}Let $N:=(n-k){{n+2d}\choose{2d}}$ denote the dimension of the space $P(2d)$ and denote by $B^N$ its unit Bombieri--Weyl ball. Let also $\alpha, \beta>0$ be given by \cref{thm:tau} and $a(n)>0$ be given by \cref{thm:thom} and define
\[ r(\epsilon):= a(n)\cdot\alpha\cdot \left(\frac{\epsilon}{2}\right)^{\beta+1}.\] 

By \cref{thm:tau}, for every element $Z\in \HH(n,k,d)$ there is $p\in P(2d)$, with $\|p\|\leq 1$ and such that 
\[\mathrm{dist}_H(Z(p)\cap D^n, Z)\leq \frac{\epsilon}{2}\quad \textrm{and}\quad \mathrm{dist}(p, \Sigma(2d))\geq \alpha \left(\frac{\epsilon}{2}\right)^\beta\|p\|.\]
(Since $Z(p)=Z(\lambda p)$, for every $\lambda\neq 0$, we may assume that $\|p\|\leq 1$.)

By \cref{thm:thom}, for every $q\in P(2d)$ with $\mathrm{dist}(p, \Sigma(2d))\geq \alpha \left(\frac{\epsilon}{2}\right)^\beta\|p\|$  
\[\label{eq:muimplies}\kappa\left(B_{C^1}(q, r(\epsilon))\right)\subset B_H\left(Z(q)\cap D^n, \tfrac{\epsilon}{2}\right).\] 

Consider now an $\left(\tfrac{r(\epsilon)}{4}\right)$--net in $B^N$ for the $C^1$ norm and denote by $\{q_1, \ldots, q_{\nu(\epsilon)}\}$ the elements from the net that are at distance at least $\alpha (\epsilon/2)^\beta$ from $\Sigma(2d)$. Then, denoting by
\[ U:=\left\{p\in P(2d)\,\bigg|\,\mathrm{dist}(p, \Sigma(2d))\geq \alpha \left(\frac{\epsilon}{2}\right)^\beta\right\}\cap B^{N}\subset B^N,\]
we see that 
\[ U\subset \bigcup_{i=1}^{\nu(\epsilon)}B_{C^1}(q_i, r(\epsilon)).\]
Consequently, every $Z\in \HH(n,k,d)$ will be at Hausdorff distance at most $\epsilon$ from one of the centers $\{q_1, \ldots, q_{\nu(\epsilon)}\}$ of these balls and \eqref{eq:muimplies} implies
\[\HH(n,k,d)\subset \bigcup_{i=1}^{\nu(\epsilon)}B_H(Z(q_i)\cap D^n, \epsilon).\]

It remains to estimate the cardinality $\nu(\epsilon)$. By construction, this cardinality is bounded by the cardinality of a $\left(\tfrac{r(\epsilon)}{4}\right)$--net in $B^N$ for the $C^1$ norm. Since all norms in finite dimension are equivalent, there exists $C=C(n,k,d)$ such that the cardinality of such net can be bounded by the cardinality of a $\left(\frac{Cr(\epsilon)}{4}\right)$--net in $B^N$ for the Bombieri--Weyl norm, and this can be bounded by
\[\nu(\epsilon)\leq (Cr(\epsilon))^{-N}=a_1\epsilon^{-a_2}.\]
\end{proof}
%\begin{remark}Recall that, in a metric space $(\mathcal{K}, \mathrm{dist})$ the entropy dimension of a bounded set $X\subset \mathcal{K}$ is defined as
%\[\dim_e(X):=\lim_{\epsilon\to 0}\frac{\log N(\epsilon, X)}{\log\left(\frac{1}{\epsilon}\right)},\]
%where $N(\epsilon, X)$ denotes the minimal number of $\epsilon$--balls needed to cover $X$. Then, the previous result says that the entropy dimension of $\HH(n,k,d)$ in $\mathcal{K}(D^n)$ is finite.
%\end{remark}
\section{Consequences for manifold learning}As a corollary of \cref{thm:thom} and \cref{thm:tau}, we prove now \cref{thm:Nvariation} (\cref{thm:Nvariationintro} from the Introduction), a result that serves as an alternative to  \cite[Theorem 4]{Narayanan}. As we already observed, compared to \cite[Theorem 4]{Narayanan}, our \cref{thm:Nvariation} has a better dependence on $\epsilon$ (but no explicit control on the implied constants).

We begin by establishing some preliminary result.
First comes a direct consequence of \cref{thm:BasuLerario}.
%\begin{proposition}For every $n,k,d\in \N$ and $\epsilon>0$ there exists $Z_\epsilon\in \HH(n,k,2d)$ that is a smooth complete intersection and such that
%\[\left|R(Z_\epsilon)-\inf_{Z\in \HH(n,k,d)}\RR(Z, \mu)\right|\leq \epsilon.\]
%Similarly, for every $n,k,d, m\in \N$, $\epsilon>0$ and $\underline{x}_m=(x_1, \ldots, x_m)\in (D^n)^m$ there exists $\widehat{Z}_{\epsilon}\in \HH(n,k,2d)$ that is a smooth complete intersection and such that
%\[\left|\widehat{R}_m(\underline{x}_m,\widehat{Z}_{\epsilon})-\inf_{Z\in \HH(n,k,d)}\RR(Z, \mu_m)\right|\leq \epsilon.\]
%\begin{proof}Both statements follows from the continuity of $R(\cdot)$ and $\widehat{R}_m(\underline{x}_m, \cdot)$ with respect to the Hausdorff metric and \cref{thm:BasuLerario}.
%\end{proof}
%\end{proposition}

\begin{proposition}Let $\mu\in \mathcal{P}(D^n)$ be a Borel measure. For every $n,k,d\in \N$ and $\epsilon>0$ there exists $Z_\epsilon\in \HH(n,k,2d)$ that is a smooth complete intersection and such that
\[\left|\RR(Z_\epsilon, \mu)-\inf_{Z\in \HH(n,k,d)}\RR(Z, \mu)\right|\leq \epsilon.\]
Similarly, for every $n,k,d, m\in \N$, $\epsilon>0$ and $\underline{x}_m=(x^1, \ldots, x^m)\in (D^n)^m$, denoting by $\mu_m$ the discrete measure
\[\mu_m:=\frac{1}{m}\sum_{i=1}^m\delta_{x^i},\]
there exists $\widehat{Z}_{\epsilon}\in \HH(n,k,2d)$ that is a smooth complete intersection and such that
\[\left|\RR(\widehat{Z}_{\epsilon}, \mu_m)-\inf_{Z\in \HH(n,k,d)}\RR(Z, \mu_m)\right|\leq \epsilon.\]
\begin{proof}Both statements follows from the continuity of $\RR(\cdot, \mu)$ with respect to the Hausdorff distance and \cref{thm:BasuLerario}.
\end{proof}
\end{proposition}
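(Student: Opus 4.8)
The plan is to reduce the statement to two ingredients established earlier: the Hausdorff approximation result \cref{thm:BasuLerario}, together with the (routine) Lipschitz continuity of the risk functional with respect to the Hausdorff distance on $\mathcal{K}(D^n)$. First I would record this continuity. For nonempty compact sets $C_1,C_2\subseteq D^n$ and any $x\in D^n$ one has $|\dist(x,C_1)-\dist(x,C_2)|\leq \dist_H(C_1,C_2)$, since the distance function to a set is $1$-Lipschitz in the Hausdorff metric; because $x,C_1,C_2\subseteq D^n$, all the distances involved are at most $\mathrm{diam}(D^n)=2$, so squaring gives $|\dist(x,C_1)^2-\dist(x,C_2)^2|\leq 4\,\dist_H(C_1,C_2)$. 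Integrating against an arbitrary Borel probability measure $\nu$ on $D^n$ then yields $|\RR(C_1,\nu)-\RR(C_2,\nu)|\leq 4\,\dist_H(C_1,C_2)$, and the key point is that this bound is uniform in $\nu$, hence applies both to $\mu$ and to each empirical measure $\mu_m$.

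For the first assertion, fix $\epsilon>0$. Since the infimum over $\HH(n,k,d)$ need not be attained, I would choose $Z=Z_0\cap D^n\in\HH(n,k,d)$, with $Z_0$ algebraic of degree $\leq d$ and $\dim(Z_0\cap D^n)=k$, that is suboptimal by at most $\epsilon/2$:
\[
\RR(Z,\mu)<\inf_{Z'\in\HH(n,k,d)}\RR(Z',\mu)+\tfrac{\epsilon}{2}.
\]
Then I would invoke \cref{thm:BasuLerario} (whose proof, following that of \cref{thm:BasuLerario1}, applies to any algebraic set of degree at most $d$) to produce a one-parameter family $p_\eta\in P(2d)\setminus\Sigma(2d)$ with $\dist_H(Z(p_\eta)\cap D^n,\,Z)\to 0$ as $\eta\to 0$, and select $\eta$ small enough that this Hausdorff distance is below $\epsilon/8$. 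Setting $Z_\epsilon:=Z(p_\eta)\cap D^n$, this set is nonempty (as $p_\eta\in P(2d)$) and a smooth complete intersection of dimension $k$ and degree $\leq 2d$, with smooth boundary on $\partial D^n$ (as $p_\eta\notin\Sigma(2d)$); in particular $Z_\epsilon\in\HH(n,k,2d)$ and is of the required type. Combining the triangle inequality with the Lipschitz bound from the first step,
\[
\left|\RR(Z_\epsilon,\mu)-\inf_{Z'\in\HH(n,k,d)}\RR(Z',\mu)\right|\leq\left|\RR(Z_\epsilon,\mu)-\RR(Z,\mu)\right|+\tfrac{\epsilon}{2}<4\cdot\tfrac{\epsilon}{8}+\tfrac{\epsilon}{2}=\epsilon.
\]

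The second assertion follows by the identical argument with $\mu_m$ in place of $\mu$: $\mu_m$ is a Borel probability measure on $D^n$, so the uniform Lipschitz bound applies to it, and the choice of a near-minimizer in $\HH(n,k,d)$ and of its smooth degree-$2d$ approximant goes through verbatim. I do not expect a genuine obstacle here; the only points requiring care are that the infimum over $\HH(n,k,d)$ may fail to be attained (handled by splitting the target error into two halves) and that one must appeal to the strengthened \cref{thm:BasuLerario}, rather than merely \cref{thm:BasuLerario1}, so that the approximant is transverse on $\partial D^n$ and therefore genuinely lies in $\HH(n,k,2d)$ as a smooth complete intersection.
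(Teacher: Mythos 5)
Your proposal is correct and is exactly the argument the paper intends: the paper's one-line proof cites precisely the two ingredients you use, namely the (uniform-in-measure) Lipschitz continuity of $\RR(\cdot,\nu)$ with respect to $\mathrm{dist}_H$ and the approximation result \cref{thm:BasuLerario}, combined via a near-minimizer and the triangle inequality. Your write-up simply makes explicit the details (the constant $4$, the $\epsilon/2$ split for a possibly unattained infimum, and the need for the strengthened boundary-transversal version) that the paper leaves implicit.
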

Next is the following lemma, which is essentially Hoeffding's bound.
 \begin{lemma}\label{lemma:balls}Let $\mu\in \mathcal{P}(D^n)$ and $\{x^i\}_{i\in \N}$ be a sequence of i.i.d. random variables sampled according to $\mu$ and, for every $m\in \N$, denote by $\mu_m$ the corresponding empirical random measure. Let also $Z_0\subset D^n$ be a compact set. Then, for every $\epsilon>0$ and $m\in \N$,
 \[\mathbb{P}\left\{\sup_{\{Z\,|\,\mathrm{dist}_H(Z, Z_0)\leq \frac{\epsilon}{16}\}}\left|\RR(Z, \mu_m)-\RR(Z, \mu)\right|>\epsilon\right\}\leq 2e^{-\frac{m\epsilon^2}{32}}. 
\]
 \end{lemma}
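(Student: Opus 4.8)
The plan is to reduce the supremum over the Hausdorff ball $\{Z \mid \mathrm{dist}_H(Z,Z_0)\leq \tfrac{\epsilon}{16}\}$ to a statement about a single random variable, namely the deviation of the empirical mean of the bounded function $x\mapsto \mathrm{dist}(x,Z_0)^2$ from its expectation, and then apply Hoeffding's inequality. First I would record the elementary Lipschitz property of the squared-distance functional with respect to the Hausdorff metric: for any compact $Z_1,Z_2\subset D^n$ and any $x\in D^n$ one has $|\mathrm{dist}(x,Z_1)^2-\mathrm{dist}(x,Z_2)^2|\leq (\mathrm{dist}(x,Z_1)+\mathrm{dist}(x,Z_2))\cdot|\mathrm{dist}(x,Z_1)-\mathrm{dist}(x,Z_2)|$, and $|\mathrm{dist}(x,Z_1)-\mathrm{dist}(x,Z_2)|\leq \mathrm{dist}_H(Z_1,Z_2)$. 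Since all points lie in $D^n$, distances are at most $2$ (in fact at most the diameter, but $2$ is a safe bound), so $|\mathrm{dist}(x,Z_1)^2-\mathrm{dist}(x,Z_2)^2|\leq 4\,\mathrm{dist}_H(Z_1,Z_2)$. Integrating against any probability measure $\nu$ on $D^n$ gives $|\RR(Z_1,\nu)-\RR(Z_2,\nu)|\leq 4\,\mathrm{dist}_H(Z_1,Z_2)$; in particular this holds for both $\nu=\mu$ and $\nu=\mu_m$.

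With this in hand, for any $Z$ with $\mathrm{dist}_H(Z,Z_0)\leq \tfrac{\epsilon}{16}$ I would write
\[
|\RR(Z,\mu_m)-\RR(Z,\mu)| \leq |\RR(Z,\mu_m)-\RR(Z_0,\mu_m)| + |\RR(Z_0,\mu_m)-\RR(Z_0,\mu)| + |\RR(Z_0,\mu)-\RR(Z,\mu)|,
\]
and bound the first and third terms each by $4\cdot\tfrac{\epsilon}{16}=\tfrac{\epsilon}{4}$, so that the supremum over the ball is at most $\tfrac{\epsilon}{2}+|\RR(Z_0,\mu_m)-\RR(Z_0,\mu)|$. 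Hence the event $\{\sup_Z |\RR(Z,\mu_m)-\RR(Z,\mu)|>\epsilon\}$ is contained in the event $\{|\RR(Z_0,\mu_m)-\RR(Z_0,\mu)|>\tfrac{\epsilon}{2}\}$.

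Finally I would apply Hoeffding's inequality to the i.i.d. random variables $Y_i:=\mathrm{dist}(x^i,Z_0)^2$, which take values in $[0,4]$ (range of length $4$), with $\RR(Z_0,\mu_m)=\tfrac1m\sum_i Y_i$ and $\mathbb{E}[Y_i]=\RR(Z_0,\mu)$. Hoeffding gives
\[
\mathbb{P}\left\{|\RR(Z_0,\mu_m)-\RR(Z_0,\mu)|>\tfrac{\epsilon}{2}\right\}\leq 2\exp\!\left(-\frac{2m(\epsilon/2)^2}{4^2}\right)=2\exp\!\left(-\frac{m\epsilon^2}{32}\right),
\]
which is exactly the claimed bound. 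There is essentially no obstacle here; the only points requiring mild care are the choice of the bound $4$ for the Lipschitz constant of $\RR$ (tracking that $|\mathrm{dist}(x,Z)^2-\mathrm{dist}(x,Z_0)^2|\leq 4\,\mathrm{dist}_H(Z,Z_0)$ is the source of the factor $16$ in the statement) and measurability of the supremum, which is harmless since by the Lipschitz estimate the supremum over the ball is determined by the single quantity $|\RR(Z_0,\mu_m)-\RR(Z_0,\mu)|$ up to the additive constant $\tfrac{\epsilon}{2}$, so one need not worry about uncountable suprema at all.
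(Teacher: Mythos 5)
Your proposal is correct and follows essentially the same route as the paper: the Lipschitz estimate $|\mathrm{dist}(x,Z)^2-\mathrm{dist}(x,Z_0)^2|\leq 4\,\mathrm{dist}_H(Z,Z_0)\leq \tfrac{\epsilon}{4}$, the three-term triangle inequality reducing the supremum over the Hausdorff ball to the single event $\{|\RR(Z_0,\mu_m)-\RR(Z_0,\mu)|>\tfrac{\epsilon}{2}\}$, and Hoeffding applied to $\mathrm{dist}(x^i,Z_0)^2\in[0,4]$ yielding $2e^{-m\epsilon^2/32}$ are exactly the steps in the paper's proof. The remark on measurability is a harmless extra.
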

 \begin{proof}
 Observe first that for all closed sets $Z, Z_0\subseteq D^n$ such that $\mathrm{dist}_H(Z, Z_0)\leq \frac{\epsilon}{16}$  and for all $x\in D^n$, since $\mathrm{diam}(D)=2$, we have
\[\left|\mathrm{dist}(x, Z)^2-\mathrm{dist}(x, Z_0)^2\right|=\left|\mathrm{dist}(x, Z)-\mathrm{dist}(x, Z_0)\right|\cdot \left |\mathrm{dist}(x, Z)+\mathrm{dist}(x, Z_0)\right|\leq \frac{\epsilon}{4}.\]
In particular,  for all closed sets $Z\subseteq D^n$ such that $\mathrm{dist}_H(Z, Z_0)\leq \frac{\epsilon}{16}$ and for all $\underline{x}_m\in (D^n)^m$ 
\[\left|\RR(Z, \mu_m)-\RR(Z_0, \mu_m)\right|\leq \frac{\epsilon}{4}\quad \textrm{and}\quad \left|\RR(Z_0, \mu)-\RR(Z, \mu)\right|\leq \frac{\epsilon}{4},\]
and, therefore, 
\begin{align}\left|\RR(Z, \mu_m)-\RR(Z, \mu)\right|\leq&\left| \RR(Z, \mu_m)-\RR(Z_0, \mu_m)\right|+\\
&+\left|\RR(Z_0, \mu_m)-\RR(Z_0, \mu)\right|+\left|\RR(Z_0, \mu)-\RR(Z, \mu)\right| \\
\leq& \frac{\epsilon}{4}+\left|\RR(Z_0, \mu_m)-\RR(Z_0, \mu)\right|+\frac{\epsilon}{4}\\
\leq&\label{eq:ineqZ0}\left|\RR(Z_0, \mu_m)-\RR(Z_0, \mu)\right|+\frac{\epsilon}{2}.
\end{align}
For a given closed set $Z\subset D^n$, consider now the event
\[A_m(Z, \epsilon):=\left\{\underline{x}_m\in (D^n)^m\,\bigg|\, \left|\RR(Z, \mu_m)-\RR(Z, \mu)\right|>\epsilon\right\}.\]
The inequality \eqref{eq:ineqZ0} implies that
\[\label{eq:impliH}\mathrm{dist}_H(Z, Z_0)\leq \frac{\epsilon}{16}\implies A_m(Z, \epsilon)\subseteq A_m(Z_0, \tfrac{\epsilon}{2}).\]
By Hoeffding's inequality\footnote{The statement of Hoeffding's inequality is the following. Let $\{X_j\}_{j \in \N}$ be a sequence of i.i.d. random variables with $a\leq X_j\leq b.$ Then for all $\lambda>0$
\[\mathbb{P}\left\{\left|\frac{1}{m}\sum_{j=1}^m X_j-\mathbb{E}X_1\right|>\lambda\right\}\leq 2e^{-\frac{2m\lambda^2}{(b-a)^2}}.\]
We apply this inequality with the choices $X_j:=\mathrm{dist}(Z_0, x_j)^2$, so that $0\leq X_j\leq 4$, and $\lambda=\frac{\epsilon}{2}$, giving \eqref{eq:hoeff}.}, for every $\epsilon>0$ and $m\in \N$ we have:
\[\label{eq:hoeff}\mathbb{P}(A_m(Z_0, \tfrac{\epsilon}{2}))=\mathbb{P}\left\{\left|\RR(Z_0, \mu_m)-\RR(Z_0, \mu)\right|>\frac{\epsilon}{2}\right\}\leq 2e^{-\frac{m\epsilon^2}{32}}.\]
Together with \eqref{eq:impliH}, this implies that for all $\epsilon>0$ and $m\in\N$ we have:
\begin{align}\mathbb{P}\left\{\sup_{\{Z\,|\,\mathrm{dist}_H(Z, Z_0)\leq \frac{\epsilon}{16}\}}\left|\RR(Z, \mu_m)-\RR(Z, \mu)\right|>\epsilon\right\}&\leq\mathbb{P}\left(\bigcup_{\{Z\,|\,\mathrm{dist}_H(Z, Z_0)\leq \epsilon\}}A_m(Z, \epsilon)\right)\\
&\leq \mathbb{P}(A_m(Z_0, \tfrac{\epsilon}{2}))\leq2e^{-\frac{m\epsilon^2}{32}}. 
\end{align}
 \end{proof}
We re now ready for the proof of \cref{thm:Nvariation}. The main idea for the proof is to pair the concentration bound from \cref{lemma:balls} with an $\epsilon$--covering argument for the space $\HH(n,k,d)$ with respect to the Hausdorff metric.  Since $\HH(n,k,d)$ is precompact in $\mathcal{K}(D^n)$, such a covering always exists, but the key point is controlling  its cardinality $\nu(\epsilon)$ in an explicit way: roughly speaking this should not grow exponentially fast in $\epsilon$, which is the content of \cref{coro:net}.
\begin{theorem}\label{thm:Nvariation}For every $n,k,d\in \N$ there exist $c_0,c_1, c_2, c_3>0$ such that the following statement is true. Let $\mu$ be a Borel probability measure on $D^n$ and for all $m\in \N$ denote by $\mu_m$ the corresponding random empirical measure. For all $0<\epsilon< c_0$ and $0<\delta<1$, 
if
\[\label{eq:boundnkd}m\geq \frac{c_3}{\epsilon^2}\log\left(\frac{c_1 \epsilon^{-c_2}}{\delta}\right),\]
then
\[\mathbb{P}\left\{\sup_{Z\in \HH(n,k,d)}\left|\RR(Z, \mu_m)-\RR(Z,\mu)\right|> \epsilon\right\}<\delta.\]
\end{theorem}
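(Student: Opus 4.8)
The strategy is the standard $\epsilon$-net argument combined with a union bound, using the quantitative precompactness result from \cref{coro:net} to keep the number of net elements under control. First I would fix $\epsilon$ and $\delta$ and apply \cref{coro:net} at scale $\tfrac{\epsilon}{16}$: this produces polynomials $q_1,\dots,q_{\nu}\in P(2d)\setminus\Sigma(2d)$, with $\nu=\nu(\tfrac{\epsilon}{16})\leq a_1(\epsilon/16)^{-a_2}$, such that every $Z\in\HH(n,k,d)$ lies in some Hausdorff ball $B_H(Z_i,\tfrac{\epsilon}{16})$ where $Z_i:=Z(q_i)\cap D^n$. This requires $\tfrac{\epsilon}{16}<\epsilon_0$, which we can arrange by choosing the constant $c_0$ small enough (specifically $c_0\leq 16\epsilon_0$).

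Next I would invoke \cref{lemma:balls} separately for each center $Z_i$: with probability at least $1-2e^{-m\epsilon^2/32}$, we have $\bigl|\RR(Z,\mu_m)-\RR(Z,\mu)\bigr|\leq\epsilon$ uniformly over all $Z$ with $\mathrm{dist}_H(Z,Z_i)\leq\tfrac{\epsilon}{16}$. Taking a union bound over $i=1,\dots,\nu$, the bad event — that \emph{some} center fails — has probability at most $2\nu e^{-m\epsilon^2/32}$. Since the net balls cover $\HH(n,k,d)$, on the complement of this bad event the supremum over the whole class $\HH(n,k,d)$ is bounded by $\epsilon$. So it remains to force
\[
2\nu e^{-m\epsilon^2/32}\leq 2 a_1\Bigl(\tfrac{\epsilon}{16}\Bigr)^{-a_2}e^{-m\epsilon^2/32}<\delta,
\]
which, after taking logarithms, is equivalent to
\[
m>\frac{32}{\epsilon^2}\left(\log\frac{2a_1 16^{a_2}}{\delta}+a_2\log\frac{1}{\epsilon}\right).
\]
This is of the required form $m\geq\tfrac{c_3}{\epsilon^2}\log\!\bigl(c_1\epsilon^{-c_2}/\delta\bigr)$ after absorbing constants: one can take $c_3=32$, $c_2=a_2$, and $c_1$ a suitable multiple of $a_1 16^{a_2}$ (using $\log(ab)\le\log a+\log b$ and combining the two logarithmic terms into a single $\log(c_1\epsilon^{-c_2}/\delta)$, which costs only another fixed constant factor in $c_3$).

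The only genuine content is supplied by \cref{coro:net} (controlling $\nu(\epsilon)$ polynomially in $\epsilon^{-1}$, which is itself the fruit of \cref{thm:thom} and \cref{thm:tau}) and by \cref{lemma:balls} (the Hoeffding-type concentration with the built-in $\tfrac{\epsilon}{16}$ slack absorbing the discretization error). Everything else is bookkeeping: matching the ball radius $\tfrac{\epsilon}{16}$ used in \cref{lemma:balls} with the covering scale in \cref{coro:net}, and tidying the constants into the stated form. The main — though modest — obstacle is purely notational: making sure the radius at which the net is taken and the radius appearing in the supremum of \cref{lemma:balls} agree, and that the restriction $\epsilon<c_0$ is strong enough to guarantee both $\tfrac{\epsilon}{16}<\epsilon_0$ (so \cref{coro:net} applies) and that the logarithmic term $\log(c_1\epsilon^{-c_2}/\delta)$ is positive, so that the displayed sample bound is meaningful.
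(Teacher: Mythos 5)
Your proposal is correct and follows essentially the same route as the paper's proof: apply \cref{coro:net} at scale $\tfrac{\epsilon}{16}$, use \cref{lemma:balls} on each Hausdorff ball of the net, take a union bound, and solve $2\nu(\epsilon/16)e^{-m\epsilon^2/32}<\delta$ for $m$, with the same constants $c_3=32$, $c_2=a_2$. The only (immaterial) difference is in how the threshold $c_0$ is chosen — the paper takes $c_0=\epsilon_0/16$ while you note $\epsilon<16\epsilon_0$ suffices; both guarantee the covering result applies.
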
\begin{proof}By \cref{coro:net} for $0<\epsilon<\epsilon_0/16=:c_0$ we can find compact sets $Z_1, \ldots, Z_{u(\epsilon)}\subset D^n$, with $Z_i:=Z(q_i)\cap D^n$ for some $q_i\in P(2d)$ nonsingular given by \cref{coro:net}, such that we can cover our hypothesis class $\HH(n,k,d)$ with the $\tfrac{\epsilon}{16}$--Hausdorff balls centered at the $Z_i$, i.e. we can write
\[\HH(n,k,d)\subseteq \bigcup_{i=1}^{u(\epsilon)}B_{H}(Z_i, \tfrac{\epsilon}{16}).\]
The cardinality of this covering can be bounded by
\[u(\epsilon)= \nu\left(\frac{\epsilon}{16}\right)\leq a_1\left(\frac{\epsilon}{16}\right)^{a_2}. \]
For every $i=1, \ldots, u(\epsilon)$, denote by
\[E_i(m,\epsilon):=\left\{\sup_{\{Z\,|\,\mathrm{dist}_H(Z, Z_i)\leq \frac{\epsilon}{16}\}}\left|\RR(Z, \mu_m)-\RR(Z, \mu)\right|>\epsilon\right\}\]
Then we can estimate 
\begin{align}\mathbb{P}\left\{\sup_{Z\in \HH(n,k,d)}\left|\RR(Z, \mu)-\RR(Z,\mu_m)\right|>\epsilon\right\}&\leq\mathbb{P}\left(\bigcup_{i=1}^{u(\epsilon)}E_i(m, \epsilon)\right)\\
&\leq u(\epsilon)\sup_{i=1, \ldots, u(\epsilon)}\mathbb{P}(E_i(m, \epsilon))\\
&\leq u(\epsilon)2e^{-\frac{m\epsilon^2}{32}}\quad (\textrm{by \cref{lemma:balls}})\\
&\leq a_1\left(\frac{\epsilon}{16}\right)^{-a_2}2e^{-\frac{m\epsilon^2}{32}}.\end{align}
The above probability is now smaller than $0<\delta<1$ if
 \[m\geq \frac{32}{\epsilon^2}\log\left(\frac{2a_1}{\delta}\left(\frac{\epsilon}{16}\right)^{-a_2}\right)=:\frac{c_3}{\epsilon^2}\log\left(\frac{c_1 \epsilon^{-c_2}}{\delta}\right).\]
\end{proof}

\bibliographystyle{alpha}
\bibliography{bibliototale}

\newcommand{\etalchar}[1]{$^{#1}$}
\begin{thebibliography}{DHO{\etalchar{+}}16}

\bibitem[BCR98]{BCR}
Jacek Bochnak, Michel Coste, and Marie-Francoise Roy.
\newblock {\em Real algebraic geometry}, volume~36 of {\em Ergebnisse der Mathematik und ihrer Grenzgebiete (3) [Results in Mathematics and Related Areas (3)]}.
\newblock Springer-Verlag, Berlin, 1998.
\newblock Translated from the 1987 French original, Revised by the authors.

\bibitem[BKS24]{metricalgebraicgeometry}
Paul Breiding, Kathl\'en Kohn, and Bernd Sturmfels.
\newblock {\em Metric algebraic geometry}, volume~53 of {\em Oberwolfach Seminars}.
\newblock Birkh\"auser/Springer, Cham, [2024] \copyright 2024.

\bibitem[BL23]{BasuLerario}
Saugata Basu and Antonio Lerario.
\newblock Hausdorff approximations and volume of tubes of singular algebraic sets.
\newblock {\em Math. Ann.}, 387(1-2):79--109, 2023.

\bibitem[BPR06]{BasuPollackRoy}
Saugata Basu, Richard Pollack, and Marie-Francoise Roy.
\newblock {\em Algorithms in real algebraic geometry}, volume~10 of {\em Algorithms and Computation in Mathematics}.
\newblock Springer-Verlag, Berlin, second edition, 2006.

\bibitem[DHO{\etalchar{+}}16]{EDD}
Jan Draisma, Emil Horobet, Giorgio Ottaviani, Bernd Sturmfels, and Rekha~R. Thomas.
\newblock The {E}uclidean distance degree of an algebraic variety.
\newblock {\em Found. Comput. Math.}, 16(1):99--149, 2016.

\bibitem[Fed59]{Federer}
Herbert Federer.
\newblock Curvature measures.
\newblock {\em Trans. Amer. Math. Soc.}, 93:418--491, 1959.

\bibitem[FMN16]{Fefferman}
Charles Fefferman, Sanjoy Mitter, and Hariharan Narayanan.
\newblock Testing the manifold hypothesis.
\newblock {\em J. Amer. Math. Soc.}, 29(4):983--1049, 2016.

\bibitem[KCPV14]{definability}
Beata Kocel-Cynk, Wieslaw Pawlucki, and Anna Valette.
\newblock A short geometric proof that {H}ausdorff limits are definable in any o-minimal structure.
\newblock {\em Adv. Geom.}, 14(1):49--58, 2014.

\bibitem[Ler23]{lecturenotesantonio}
Antonio Lerario.
\newblock Lectures on metric algebraic geometry.
\newblock {\em Lecture notes, available: \url{https://drive.google.com/file/d/1A6UzYuv1OjucRscwZOQ4mDakKSfk_c77/view}}, 2023.

\bibitem[Mil64]{Milnor}
J.~Milnor.
\newblock On the {{Betti}} numbers of real varieties.
\newblock {\em Proceedings of the American Mathematical Society}, 15(2):275--280, 1964.

\bibitem[Nar12]{Narayanan}
Hariharan Narayanan.
\newblock Sample complexity in manifold learning.
\newblock In {\em Manifold learning theory and applications}, pages 73--93. CRC Press, Boca Raton, FL, 2012.

\bibitem[Raf14]{Raffalli}
Christophe Raffalli.
\newblock Distance to the discriminant, 2014.

\bibitem[Ric68]{Richardson}
Daniel Richardson.
\newblock Some undecidable problems involving elementary functions of a real variable.
\newblock {\em J. Symbolic Logic}, 33:514--520, 1968.

\bibitem[Vap00]{VladimirVapnik}
Vladimir Vapnik.
\newblock {\em The Nature of Statistical Learning Theory}.
\newblock Springer, 2nd edition, 2000.

\bibitem[vdD98]{Vandendries}
Lou van~den Dries.
\newblock {\em Tame topology and o-minimal structures}, volume 248 of {\em London Mathematical Society Lecture Note Series}.
\newblock Cambridge University Press, Cambridge, 1998.

\bibitem[YC04]{ComteYomdin}
Yosef Yomdin and Georges Comte.
\newblock {\em Tame geometry with application in smooth analysis}, volume 1834 of {\em Lecture Notes in Mathematics}.
\newblock Springer-Verlag, Berlin, 2004.

\bibitem[Zha23]{TongZhang}
Tong Zhang.
\newblock {\em Mathematical Analysis of Machine Learning Algorithms}.
\newblock Cambridge University Press, 2023.

\bibitem[ZK23]{KileelZhang}
Yifan Zhang and Joe Kileel.
\newblock Covering number of real algebraic varieties and beyond: Improved bounds and applications, 2023.

\end{thebibliography}

\end{document}